\documentclass{amsart}


\usepackage{amsthm}
\usepackage{url}
\usepackage{amsfonts}
\usepackage{amssymb}
\usepackage{bbm}
\usepackage{mathrsfs}
\usepackage{mathtools}
\numberwithin{equation}{section}
\usepackage{color}
\usepackage{hyperref}
\usepackage{graphicx}
\usepackage{caption}
\usepackage{subcaption}
\usepackage{inputenc}
\usepackage[linesnumbered,commentsnumbered,ruled,vlined]{algorithm2e}
\usepackage[perpage,symbol]{footmisc}

\newtheorem{theorem}{Theorem}
\newtheorem{lemma}{Lemma}
\newtheorem{corollary}{Corollary}
\theoremstyle{definition}
\newtheorem{definition}{Definition}
\theoremstyle{remark}
\newtheorem*{remark}{Remark}


\newcommand{\ee}{{\rm e}}

\newcommand{\dd}{{\rm d}}

\newcommand{\abs}[1]{\ensuremath{|#1|}}
\newcommand{\bx}{\ensuremath{\mathbf{x}}}
\newcommand{\bm}{\ensuremath{\mathbf{m}}}
\newcommand{\ba}{\ensuremath{\mathbf{a}}}
\newcommand{\bv}{\ensuremath{\mathbf{v}}}

\newcommand{\bw}{\ensuremath{\mathbf{w}}}
\newcommand{\bu}{\ensuremath{\mathbf{u}}}
\newcommand{\bM}{\ensuremath{\mathbf{M}}}

\newcommand{\bQ}{\ensuremath{\mathbf{Q}}}

\newcommand{\cJ}{\ensuremath{\mathcal{J}}}

\newcommand{\cK}{\ensuremath{\mathcal{K}}}
\newcommand{\uM}{\ensuremath{\underline{M}}}

\newcommand{\bR}{{\mathbb R}}
\newcommand{\norm}[2]{\ensuremath{\|#2\|_{#1}}}
\newcommand{\inner}[2]{\ensuremath{\left\langle #1,#2 \right\rangle}}

\newcommand{\pd}{ \partial}
\newcommand{\ud}{\, \mathrm{d} }

\newcommand{\Diff}{\mathrm{Diff}}
\newcommand{\Xcal}{\mathfrak{X}}


\setcounter{topnumber}{9}
\setcounter{bottomnumber}{9}
\setcounter{totalnumber}{20}
\setcounter{dbltopnumber}{9}

\title[Application of the DVDM to the EPDiff equation]
{Discrete Variational Derivative Methods for the EPDiff equation}
\author{Stig Larsson, Takayasu Matsuo, Klas Modin, Matteo Molteni}

\begin{document}

\begin{abstract}
The aim of this paper is the derivation of structure preserving schemes for the
solution of the EPDiff equation, with particular emphasis on the two dimensional
case. We develop three different schemes based on the Discrete Variational 
Derivative Method (DVDM) on a rectangular domain discretized with a 
regular, structured, orthogonal grid.

We present numerical experiments to support our claims: we investigate the 
preservation of energy and linear momenta, the reversibility, and 
the empirical convergence of the schemes. 
The quality of our schemes is finally tested by simulating the interaction of 
singular wave fronts. 
\end{abstract}

\maketitle

\tableofcontents

\section{Introduction}\label{section:introduction}

In this paper we develop and analyze numerical schemes for the \emph{EPDiff 
equation}, i.e., the nonlinear partial differential equation (PDE) given by
\begin{align}\label{eq:EPDiff}
&\frac{\partial}{\partial t} \bm  + \nabla \bm\cdot\bu + (\nabla \bu)^\top \cdot
\bm + \bm (\nabla\cdot \bu) = 0,
\end{align}
where $\bm=(m_1,\ldots,m_n)$ and $\bu=(u_1,\ldots,u_n)$ are vector-valued
functions of time $t$ and space $\bx=(x_1,\ldots,x_n)$, and 
$\bu$ is related to $\bm$ through the Helmholtz equation
\begin{equation}
	\bm = (1-\alpha^2\Delta) \bu, \quad \alpha>0.
\end{equation} 
We refer to $\bm$ as \emph{momentum} and to $\bu$ as \emph{velocity}.
Throughout the paper, the spatial domain is denoted~$\Omega$.
For simplicity, we take
\begin{equation}\label{eq:domain_rectangular}
	\Omega = [-1,1]^n = [-1,1]\times\cdots\times[-1,1]	
\end{equation}
with periodic boundary conditions.
That is, $\Omega$ is the flat $n$--torus.


The EPDiff equation arises in several contexts:
\begin{enumerate}
	\item As a geodesic equation in infinite-dimensional Riemannian geometry; it is the Poisson reduced form of a geodesic equation on the infinite-dimensional space of diffeomorphisms equipped with the right-invariant $H^1_\alpha$-metric.
	For details, see~\cite{HoMa2005,HolmBook} and references therein.

	\item As a model of shallow water wave dynamics. 
	In particular, to model ocean wave-fronts, 100-200~km in length, created by tides and currents, and regularly observed in satellite images of the earth~\cite{Holm}.
	In this context, $n=2$ and the EPDiff equation is a two-dimensional generalization of the Camassa--Holm equation~\cite{CaHo1993} for one-dimensional shallow water waves.

	\item As an inviscid, compressible version of the 
Navier--Stokes-$\alpha$ model used in fluid turbulence~\cite{Sh1998,HoMa2005}.
	In this context, $n=3$.

	\item As the governing equations in diffeomorphic shape analysis, particularly \emph{computational anatomy}, where continuous warps between medical images and shapes are computed using geodesics on the space of diffeomorphisms.
	Typically, $n=3$ in this context, but $n=2$ and $n=4$ are also of interest.
	For details, see~\cite{Shapes} and references therein.
\end{enumerate}

A consequence of the geodesic nature of the EPDiff equation~\eqref{eq:EPDiff} is a set of salient structural properties.
In particular, the equation is a Hamiltonian system with respect to a \emph{Lie--Poisson structure}~\cite[ch.13]{MaRa1999}, which implies that
\begin{enumerate}
 \item the solution flow preserve a Poisson structure, and,
 \item there are first integrals (conservation laws) given by the linear momenta and the total energy.
\end{enumerate}

In spite of its many applications, little attention has been given to numerical solution of the EPDiff equation, addressed in this paper.
Following the strategy of \emph{geometric integration} (see the monographs~\cite{SaCa1994,LeRe2004,HaLuWa2006,FeQi2010}), numerical discretizations
should preserve as much as possible of the geometric structure in order to 
produce a qualitatively correct behaviour.
It is, however, not possible to preserve both the Poisson structure and the total energy, as follows from a general result by Ge and Marsden~\cite{ZhMa1988} that applies to numerical integration of any non-integrable Hamiltonian system.
Geometric integrators for Hamiltonian systems are therefore naturally divided into two groups: those that preserve the Poisson structure and those that conserve the total energy.

For Poisson structure preserving discretization of the EPDiff equation, the only known approach is to use \emph{particle methods}~\cite{McMa2007,Particle}.
Here, one utilizes that the EPDiff equation has weak soliton solutions, where the momentum $\bm(\bx,t)$ is a finite sum of weighted Dirac delta functions (the ``particles'').
The motion of the solitons is governed by a finite dimensional, non-separable canonical Hamiltonian system, for which symplectic Runge--Kutta methods can be used.
Due to the non-smooth character of the solitons, the convergence of particle methods is slow, if at all.
It is an open problem to construct Poisson preserving discretizations of the EPDiff equation using classical numerical PDE methodology.

Another approach to the discretization, used in \cite{Holm}, is to deploy the 
compatible differencing algorithm (CDA), presented in 
\cite{CDA2,CDA1,CDA3}. 
The choice of CDA is feasible whenever the governing equations can be expressed in terms of the divergence, gradient and curl 
operators.
It is, however, not clear to what extent such methods are structure preserving.



In this paper we develop the first energy conserving geometric integrators for the EPDiff equation.
Our methods are based on the \emph{discrete variational derivative method} (DVDM), which provides a systematic approach to energy preserving discretizations of Hamiltonian PDE.
For the Camassa--Holm equation (corresponding to 1D EPDiff), DVDM is developed in~\cite{Yuto}, showing good numerical results accompanied by rigorous analysis.
The main motivation for this paper is to extend the results in~\cite{Yuto} to the higher dimensional case, thereby providing reliable numerical algorithms for exploring EPDiff and its emerging applications.



The paper is organized as follows.
In Section~\ref{section:EPDiff} we briefly recall how the explicit form of the EPDiff
equation is obtained. 
In Section~\ref{section:tools} we state some results about DVDM that shall be 
useful later on.
In Section~\ref{section:DVDM} we present the discrete variational derivative 
method and recall its application in the case of the Camassa--Holm equation.
In Sections~\ref{section:first_scheme} we present 
the first scheme, obtained by discretizing the 
energy at time $t_n$ by simply evaluating it on the grid point. 
This scheme is implicit and non-linear, so we implement it by suitable fixed point iterations. 
We prove that the scheme preserves energy and linear momenta, and give a result of 
solvability and uniqueness. 
In Section~\ref{section:second_scheme} we derive an 
explicit scheme where the non-linearity is no longer present; this is done by discretizing the energy by means of a suitable average. 
We prove conservation of linear momenta and energy, but we are no longer 
able to prove that the solution is bounded. In 
Section~\ref{section:third_scheme} we present a modification of the second 
scheme, which is now implicit and performs better in terms of stability. However 
the price we pay is that the scheme does not preserve the linear momenta 
anymore, although we prove that energy is conserved.
In Section~\ref{section:predictor-corrector} we present a predictor-corrector method based on Scheme~1 and Scheme~2. 
Finally, in Section~\ref{section:numerics}, we report our numerical tests, together with an empirical convergence analysis, a comparative performance analysis and a study of time-reversibility. 

For simplicity, the schemes are presented in a two-dimensional setting, 
but they extend naturally to higher dimensions.

\section{Background on EPDiff}\label{section:EPDiff}
The EPDiff equation~\eqref{eq:EPDiff} is an instance of a large class of non-linear PDE called \emph{Euler--Arnold equations}.
Such an equation describes geodesics on a Lie group equipped with a left or right invariant Riemannian metric.
The first example, given by Poincar\'e~\cite{Po1901}, is the equation of a free rigid body;
here the Lie group is given by $\mathrm{SO}(3)$, the group of rotation matrices, and the metric is provided by the moments of inertia.
The first infinite-dimensional example is Arnold's remarkable discovery that the Euler equations of an incompressible perfect fluid is a geodesic equation~\cite{Ar1966}; here the group is given by the volume preserving diffeomorphism of the domain occupied by the fluid.
Since Arnold's discovery, many PDE in mathematical physics are found to by Euler--Arnold equations.
For example, the KdV, Camassa--Holm, Hunter--Saxton, Landau--Lifshitz, and compressible Euler equations (see~\cite{ArKh1998,KhWe2009} for details).

In this section we briefly discuss the origin, derivation, and properties of the EPDiff equation.
For details on the derivation, see~\cite{HoMa2005,HolmBook}.
For results on well-posedness, see~\cite{Ga2009,MiPr2010,MiMu2013,Mo2015}.

\subsection{EPDiff is a Geodesic Equation}
Let $\Omega$ be the rectangular domain~\eqref{eq:domain_rectangular}, $\Diff(\Omega)$ be the group of diffeomorphisms of $\Omega$, and $\Xcal(\Omega)$ be the space of smooth vector fields on $\Omega$.
A (weak) inner product on $\Xcal(\Omega)$ is given by
\begin{equation}\label{eq:H1inner}
	\inner{\bu}{\bv}_{H^1_{\alpha}} \coloneqq \int_\Omega \sum_{i=1}^{n} (u_iv_i + \alpha^2\nabla u_i\cdot\nabla v_i) \ud \bx
\end{equation}
Integration by parts in combination with periodic boundary conditions give
\begin{equation*}
	\inner{\bu}{\bv}_{H^1_{\alpha}} = \int_\Omega \sum_{i=1}^{n} (u_i - \alpha^2\Delta u_i) v_i \ud \bx 
	= \int_\Omega (\underbrace{\bu-\alpha^2\Delta \bu}_{Q\bu})\cdot \bv \ud \bx = \inner{Q\bu}{\bv}_{L^2}.
\end{equation*}
The self-adjoint differential operator $Q\colon \Xcal(\Omega)\to\Xcal(\Omega)$ is often called \emph{inertia operator}, reflecting the finite-dimensional case of the free rigid body, where $Q$ is the moments of inertia matrix.

As we shall now see, the inner product $\inner{\cdot}{\cdot}_{H^1_\alpha}$ (or equivalently the operator $Q$) induces a Riemannian metric on the infinite-dimensional manifold $\Diff(\Omega)$ (for details about the manifold structure of $\Diff(\Omega)$, see for example~\cite{EbMa1970,Ha1982}).
Indeed, recall that a Riemannian metric on a manifold $M$ consists of a smooth field of inner products, one on each tangent space $T_pM$ for $p\in M$.
So, in our case $M=\Diff(\Omega)$ and the tangent space $T_\eta\Diff(\Omega)$ at $\eta\in\Diff(\Omega)$ consists of smooth functions $\Omega\to\mathbb{R}^{n}$.
The Riemannian metric induced by $\inner{\cdot}{\cdot}_{H^1_\alpha}$ is then given by
\begin{equation}\label{eq:H1metric}
	G_\eta(\dot\eta,\dot\eta) = \inner{\dot\eta\circ\eta^{-1}}{\dot\eta\circ\eta^{-1}}_{H^1_\alpha}.
\end{equation}
By construction, it is \emph{right-invariant}.
That is, for each $\varphi\in\Diff(\Omega)$ we have
\begin{equation*}
	G_{\eta\circ\varphi}(\dot\eta\circ\varphi,\dot\eta\circ\varphi) = G_{\eta}(\dot\eta,\dot\eta).
\end{equation*}

We are interested in deriving the equations for geodesics on $\Diff(\Omega)$ with respect to the Riemannian metric~\eqref{eq:H1metric}.
The definition of a geodesic is a curve $\gamma:[a,b]\to\Diff(\Omega)$ that extremizes the action functional
\begin{equation}
	S(\gamma) = \frac{1}{2}\int_{a}^{b} G_{\gamma(t)}(\dot\gamma(t),\dot\gamma(t))\ud t.
\end{equation}
Thus, if $\gamma(t)$ is an extremal curve, then for any variation $\gamma_\epsilon(t)\coloneqq \gamma(t) + \epsilon\delta\gamma(t)$ such that $\delta\gamma(a) = \delta\gamma(b) = 0$ we have that
\begin{equation}
	\frac{\ud}{\ud\epsilon}\Big|_{\epsilon=0} S(\gamma_\epsilon) = 0.
\end{equation}
If we carry out the differentiation we get
\begin{equation}\label{eq:var_S}
	\begin{split}
		\frac{\ud}{\ud\epsilon}\Big|_{\epsilon=0} S(\gamma_\epsilon)  = \int_a^b \inner{\frac{\ud}{\ud\epsilon}\Big|_{\epsilon=0}\dot{\gamma}_\epsilon(t)\circ\gamma_\epsilon^{-1}}{\bu(t)}_{H^1_{\alpha}} \ud t
	\end{split}
\end{equation}
where $\bu(t) = \dot\gamma(t)\circ\gamma(t)^{-1}$.
To proceed from here we need the following result, stated without a proof.

\begin{definition}\label{def:vf_commutator}
	The \emph{vector field commutator} is the skew-symmetric bi-linear form $[\cdot,\cdot]\colon\Xcal(\Omega)\times\Xcal(\Omega)\to\Xcal(\Omega)$ given by
	\begin{equation*}
		[\bv,\bu] = \nabla \bu \cdot \bv - \nabla \bv\cdot\bu,
	\end{equation*}
	or in coordinates
	\begin{equation*}
		[\bv,\bu]_i = \sum_{j=1}^{n}\left(v_j\frac{\partial u_i}{\partial x_j} - u_j\frac{\partial v_i}{\partial x_j}\right).
	\end{equation*}
\end{definition}

\begin{lemma}[Arnold~\cite{Ar1966}]\label{lem:lem_arnold}
	Let
	\begin{equation*}
		\bu_{\epsilon}(t) \coloneqq \dot\gamma_{\epsilon}(t)\circ\gamma_\epsilon(t)^{-1} .
	\end{equation*}
	Then
	\begin{equation*}
		\frac{\ud}{\ud\epsilon}\Big|_{\epsilon=0} \bu_{\epsilon}(t) = \dot\bv(t) - [\bv(t),\bu(t)].
	\end{equation*}
	where
	\begin{equation*}
		\bv(t) = \delta\gamma(t)\circ\gamma(t)^{-1}.
	\end{equation*}
\end{lemma}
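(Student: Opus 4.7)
The plan is to derive the stated identity via the standard Arnold trick: equality of mixed partials of the two-parameter map $(t,\epsilon)\mapsto \gamma_\epsilon(t)$, combined with the chain rule for composition of vector fields with diffeomorphisms. The main obstacle is purely bookkeeping: one must keep track of where each quantity is evaluated (at $\bx$ versus at $\gamma_\epsilon(\bx)$) when expanding the composed expressions, and compose with $\gamma_\epsilon^{-1}$ at the right moment.

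First I would introduce the natural $\epsilon$-analogue of $\bv$, namely
\begin{equation*}
  \bv_\epsilon(t) \coloneqq \frac{\partial \gamma_\epsilon(t)}{\partial\epsilon}\circ \gamma_\epsilon(t)^{-1},
\end{equation*}
so that $\bv_0 = \bv$ and, by definition, $\partial_\epsilon\gamma_\epsilon = \bv_\epsilon\circ\gamma_\epsilon$ and $\partial_t\gamma_\epsilon = \bu_\epsilon\circ\gamma_\epsilon$. The key observation is that $\gamma_\epsilon(t)$ is smooth in both arguments, so mixed partials commute:
\begin{equation*}
  \frac{\partial}{\partial\epsilon}\bigl(\bu_\epsilon\circ\gamma_\epsilon\bigr) \;=\; \frac{\partial^2 \gamma_\epsilon}{\partial\epsilon\,\partial t} \;=\; \frac{\partial^2 \gamma_\epsilon}{\partial t\,\partial\epsilon} \;=\; \frac{\partial}{\partial t}\bigl(\bv_\epsilon\circ\gamma_\epsilon\bigr).
\end{equation*}

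Next I would expand both sides with the chain rule, being careful with the spatial Jacobians:
\begin{equation*}
  \bigl(\partial_\epsilon \bu_\epsilon\bigr)\!\circ\gamma_\epsilon \,+\, \bigl(\nabla \bu_\epsilon\circ\gamma_\epsilon\bigr)\!\cdot\!\bigl(\bv_\epsilon\circ\gamma_\epsilon\bigr)
  \;=\;
  \dot\bv_\epsilon\!\circ\gamma_\epsilon \,+\, \bigl(\nabla \bv_\epsilon\circ\gamma_\epsilon\bigr)\!\cdot\!\bigl(\bu_\epsilon\circ\gamma_\epsilon\bigr).
\end{equation*}
Since $\gamma_\epsilon(t)$ is a diffeomorphism of $\Omega$, I can compose on the right with $\gamma_\epsilon^{-1}$ to strip the outer $\circ\gamma_\epsilon$ from every term, yielding the pointwise identity
\begin{equation*}
  \partial_\epsilon \bu_\epsilon \;=\; \dot\bv_\epsilon \,+\, \nabla\bv_\epsilon\cdot \bu_\epsilon \,-\, \nabla\bu_\epsilon\cdot\bv_\epsilon \;=\; \dot\bv_\epsilon - [\bv_\epsilon,\bu_\epsilon],
\end{equation*}
where the last equality is just the definition of the vector field commutator from Definition~\ref{def:vf_commutator}.

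Finally, setting $\epsilon = 0$ gives exactly the stated formula, using $\bv_0 = \bv$ and $\bu_0 = \bu$. The only point requiring mild care is to justify the commutation of partials and the chain rule in the infinite-dimensional setting $\Diff(\Omega)$, but since we are working with smooth diffeomorphisms of the compact torus $\Omega$ and the composition $(\eta,\bw)\mapsto \bw\circ\eta$ is smooth in the Fréchet/convenient calculus (see e.g.\ \cite{EbMa1970,Ha1982}), this is standard and the formal computation above is rigorous.
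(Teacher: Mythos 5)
The paper states this lemma explicitly \emph{without} proof (it is attributed to Arnold and the text says ``stated without a proof''), so there is no in-paper argument to compare against. Your argument is the standard and correct one: equality of mixed partials of $(t,\epsilon)\mapsto\gamma_\epsilon(t)$ combined with the chain rule, followed by composition with $\gamma_\epsilon^{-1}$; the signs come out consistent with the paper's convention $[\bv,\bu]=\nabla\bu\cdot\bv-\nabla\bv\cdot\bu$, and since the variation is the linear one $\gamma_\epsilon=\gamma+\epsilon\,\delta\gamma$, your $\bv_\epsilon$ indeed reduces at $\epsilon=0$ to the $\bv=\delta\gamma\circ\gamma^{-1}$ of the statement.
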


For simplicity, let us from now on omit the~$t$ argument and use the momentum variable $\bm=Q\bu$ where suitable.
Continuing from \eqref{eq:var_S}, Lemma~\ref{lem:lem_arnold} in combination with 
integration by parts in both $t$ and $\bx$ then give
\begin{equation}\label{eq:var_S2}
	\begin{split}
		\frac{\ud}{\ud\epsilon}\Big|_{\epsilon=0} S(\gamma_\epsilon)
		&= 
		\int_a^b \inner{\bu}{\dot\bv + [\bv,\bu]}_{H^1_{\alpha}} \ud t  \\
		&= 
		\int_a^b \inner{\bm}{\dot\bv - [\bv,\bu]}_{L^2} \ud t \\
		&= 
		\int_a^b \int_\Omega \bm \cdot (\dot\bv - [\bv,\bu]) \ud\bx \ud t \\
		&= 
		\int_a^b \int_\Omega \left( -\dot\bm \cdot \bv - \bm\cdot [\bv,\bu]\right) \ud\bx \ud t \\
		&= 
		\int_a^b \int_\Omega \left( -\dot\bm \cdot \bv - \bm\cdot (\nabla\bu\cdot\bv) + \bm\cdot(\nabla\bv\cdot\bu) \right) \ud\bx \ud t \\
		&= 
		\int_a^b \int_\Omega \left( -\dot\bm \cdot \bv - (\nabla\bu)^\top\bm\cdot \bv - \bm(\nabla\cdot\bu)\cdot\bv - (\nabla\bm\cdot\bu)\cdot\bv \right) \ud\bx \ud t \\
		&= 
		\int_a^b \inner{-\dot\bm - (\nabla\bu)^\top\bm - \bm(\nabla\cdot\bu) - \nabla\bm\cdot\bu}{\bv}_{L^2} \ud t .
	\end{split}
\end{equation}
Since this should be valid for any path $\bv(t)$, it follows from calculus of variations that the governing equations expressed in the variables $\bu$ and $\bm$ are given by~\eqref{eq:EPDiff}.

To reconstruct the geodesic path from a solution $\bu=\bu(t)$ of~\eqref{eq:EPDiff}, one needs to solve the spatially point-wise non-autonomous ODE
\begin{equation*}
	\frac{\ud}{\ud t}\gamma = \bu\circ\gamma.
\end{equation*}
It is in this sense that the EPDiff equation is a geodesic equation.
It is often called a \emph{reduced} geodesic equation.

\subsection{EPDiff for $n=1$ is Camassa--Holm}

In the case $n=1$ the EPDiff equation~\eqref{eq:EPDiff} becomes
\begin{equation}\label{eq:EPDiff1d}
\begin{split}
&{\partial_t} m  + u \partial_x m + m \partial_x u + m
\partial_x u = 0,\\
&m = (1-\alpha^2 \partial_{xx}^2) u.
\end{split}	
\end{equation}
Substituting the second in the first equation, we get
\begin{align*}
{\partial_t} (u-\alpha^2 \partial_{xx}^2u)  + u \partial_x
(u-\alpha^2 \partial_{xx}^2u) + (u-\alpha^2 \partial_{xx}^2u) \partial_x u +
 (u-\alpha^2 \partial_{xx}^2u) \partial_x u = 0,
\end{align*}
which can be rewritten as
\begin{align*}
\partial_t u - \alpha^2 \partial_{xxt}^3 u =  u \alpha^2
\partial_{xxx}^3u  - 3 u \partial_x u + 2 \alpha^2
\partial_{xx}^2 u \partial_x u.
\end{align*}
If we take $\alpha=1$, this is the Camassa--Holm equation for shallow water waves~\cite{CaHo1993}.

\subsection{EPDiff is a Hamiltonian PDE}

The numerical methods that we use in this paper are developed for Hamiltonian PDE.
In this section we give the Hamiltonian form of the EPDiff equation.

In the language of mechanics, the inner product~\eqref{eq:H1inner} defines \emph{kinetic energy}.
Thus, a geodesic equation can be thought of as a mechanical system where only kinetic energy is present (there is no potential energy).
In terms of the velocity $\bu$ and the momentum $\bm$, the energy is given by
\begin{equation}
	H = \frac{1}{2}\inner{\bu}{\bm}_{L^2}.
\end{equation}
We think of the energy $H=H(\bm)$ as the \emph{Hamiltonian function} for our mechanical system.
It is now straightforward to check that the EPDiff equation can be written
\begin{align}\label{eq:CH_hamiltonian2}
	\frac{\partial \bm}{\partial t} &= -\Gamma_{\bm} \, \frac{\delta H}{\delta \bm},
\end{align}
where the first order differential operator $\Gamma_{\bm}$ is given by
\begin{equation}
	\Gamma_{\bm}\bv = \nabla \bm\cdot\bv + (\nabla \bv)^\top \cdot \bm + \bm (\nabla\cdot \bv).
\end{equation}
Notice that 
\begin{equation*}
	\frac{\delta H}{\delta \bm} = \bu = Q^{-1}\bm.
\end{equation*}
Associated with $H$ is the \emph{Hamiltonian density}, i.e., the scalar field $\mathcal H$ defined so that
\begin{equation}
	H(\bm) = \int_{\Omega} \mathcal H \ud\bx .
\end{equation}
Thus, the Hamiltonian density associated with~\eqref{eq:CH_hamiltonian2} is given by $\mathcal H = \frac{\bm\cdot\bu}{2}$.
Throughout the paper we use a slight abuse of notation in that $H$ denotes both the Hamiltonian function and the Hamiltonian density; which shall be clear from the context.

The operator $\Gamma_{\bm}$ defines a \emph{Lie--Poisson structure} 
and the EPDiff equation is Hamiltonian with respect to this Poisson structure.
For more information on Lie--Poisson structures we refer to~\cite{MaRa1999}.
In this paper, the following result suffices.

\begin{lemma}\label{lem:skew_symmetry}
	$\Gamma_{\bm}$ is a \emph{skew-symmetric} operator. 
	That is, for all $\bv$ and $\bu$
	\begin{equation}
		\inner{\bv}{\Gamma_{\bm}\bu}_{L^2} + \inner{\bu}{\Gamma_{\bm}\bv}_{L^2} = 0.
	\end{equation}
\end{lemma}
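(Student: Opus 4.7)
The plan is to recognize that the key algebraic identity needed is already essentially contained in the variational calculation carried out in~\eqref{eq:var_S2}, namely the pairing of $\bm$ against the commutator $[\bv,\bu]$. Concretely, I would first establish the integration by parts identity
\begin{equation*}
\inner{\bv}{\Gamma_{\bm}\bu}_{L^2} = \int_\Omega \bm \cdot [\bv,\bu]\ud\bx,
\end{equation*}
and then exploit the skew-symmetry of the vector field commutator stated in Definition~\ref{def:vf_commutator} to conclude that swapping $\bu$ and $\bv$ flips a sign, which is exactly the claim.

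For the first step, I would expand $\Gamma_{\bm}\bu$ term by term and pair it against $\bv$. The term $(\nabla\bu)^\top\bm \cdot \bv$ equals $\bm \cdot (\nabla\bu \cdot \bv)$ just by re-indexing the double sum. For the remaining two terms $\nabla\bm\cdot\bu$ and $\bm(\nabla\cdot\bu)$, I would reverse the integration by parts in the fourth-to-last line of~\eqref{eq:var_S2}: starting from $\int_\Omega \bv \cdot(\nabla\bm\cdot\bu + \bm(\nabla\cdot\bu))\ud\bx$, integrating by parts in each $x_j$ (periodic boundary terms vanish on the torus $\Omega$) collapses the two pieces into the single expression $-\int_\Omega \bm \cdot(\nabla\bv\cdot\bu)\ud\bx$. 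Adding the three contributions gives $\int_\Omega \bm\cdot(\nabla\bu\cdot\bv - \nabla\bv\cdot\bu)\ud\bx$, which is exactly $\int_\Omega \bm\cdot[\bv,\bu]\ud\bx$.

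With this identity in hand, the conclusion is immediate: since $[\bv,\bu] = -[\bu,\bv]$ by Definition~\ref{def:vf_commutator}, one has
\begin{equation*}
\inner{\bv}{\Gamma_{\bm}\bu}_{L^2} + \inner{\bu}{\Gamma_{\bm}\bv}_{L^2} = \int_\Omega \bm\cdot\bigl([\bv,\bu] + [\bu,\bv]\bigr)\ud\bx = 0.
\end{equation*}

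There is really no serious obstacle here; the main thing to be careful about is bookkeeping of indices and the placement of transposes in the identity $((\nabla\bu)^\top\bm)\cdot\bv = \bm\cdot(\nabla\bu\cdot\bv)$, and making sure that the integration by parts really does use periodicity rather than a compact-support or decay assumption. Since $\Omega$ is the flat $n$-torus and all fields are assumed smooth, this is unproblematic.
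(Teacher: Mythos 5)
Your proposal is correct and follows essentially the same route as the paper: the paper's proof simply cites the computation in~\eqref{eq:var_S2} to obtain $\inner{\bv}{\Gamma_{\bm}\bu}_{L^2} = \inner{\bm}{[\bv,\bu]}_{L^2}$ and then invokes skew-symmetry of the commutator, which is exactly the identity you derive (in more explicit detail) and the conclusion you draw. No issues.
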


\begin{proof}
	From the calculation~\eqref{eq:var_S2} we see that
	\begin{equation*}
		\inner{\bv}{\Gamma_{\bm}\bu}_{L^2} = \inner{\bm}{[\bv,\bu]}_{L^2}.
	\end{equation*}
	The result now follows from skew-symmetry of the commutator $[\cdot,\cdot]$.
\end{proof}

A direct consequence of Lemma~\ref{lem:skew_symmetry} is that the energy is conserved.
Indeed, if $\bm=\bm(t)$ is a solution to equation~\eqref{eq:CH_hamiltonian2}, then
\begin{equation}
	\frac{\ud}{\ud t}H(\bm)
	= \inner{\frac{\delta H}{\delta \bm}}{\frac{\pd\bm}{\pd t}}_{L^2}
	= \inner{\frac{\delta H}{\delta \bm}}{\Gamma_{\bm}\frac{\delta H}{\delta \bm}}_{L^2} = 0,
\end{equation}
where the last equality follows from the skew-symmetry of $\Gamma_{\bm}$.
In addition, the linear momenta $\int_\Omega u_k \ud\bx$ for $k=1,\ldots,n$ are also conserved.

The key property of the numerical discretization schemes considered in this paper is that they \emph{conserve the energy}, and therefore preserve a qualitative property of the exact solution.
In most cases, the schemes also \emph{conserve the linear momenta}.
In the next section we shall briefly describe the basic notion of the discretization methods, before going deeper in detail and present the
schemes that we implement.


\begin{remark}
	The Camassa--Holm equation~\eqref{eq:EPDiff1d} has a \emph{bi-Hamiltonian structure}.
	That is, it is Hamiltonian with respect to two different Poisson structures.
	A closely related property is that the Camassa--Holm equation is \emph{integrable}---its solutions are determined by an infinite number of first integrals.
	However, the bi-Hamiltonian property of the Camassa--Holm equation is false for the EPDiff equation when $n>1$.
	In particular, the EPDiff equation~\eqref{eq:EPDiff} with $n>1$ is not integrable.
\end{remark}


\section{Background on DVDM} \label{section:preliminaries}
%
In this section we prepare our notation and some discrete tools.
We also give a short survey of the discrete variational derivative method,
based on \cite{Yuto}.

\subsection{Discrete Definitions, Identities, and Estimates}\label{section:tools}

As mentioned above, we consider here the two-dimensional case $(n=2)$ for simplicity, but we remind the reader that it is straightforward to extend to arbitrary dimensions.
Let $\cK\geq 1$ and $\cJ\geq 1$ denote the number of grid points in the $x$ and $y$ direction respectively, and let $\Delta x= 2/\cK$ and $\Delta 
y = 2/\cJ$ be the distance between the grid points.
An inner product on $\bR^{\cK \times \cJ}$ is given by
\begin{align*}
\inner{\bv}{\bw}:= \sum_{k=0}^{\cK-1}\sum_{j=0}^{\cJ-1} v_{k,j} w_{k,j} \Delta 
x 
\Delta y.
\end{align*}
The corresponding norm is 
\begin{align*}
\norm{}{\bw}^2:= \sum_{k=0}^{\cK-1}\sum_{j=0}^{\cJ-1} w_{k,j}^2 \Delta x 
\Delta y.
\end{align*}
The Hadamard product is defined as 
\begin{align*}
(\bv \cdot \bw)_{k,j} :=  v_{k,j} w_{k,j},
\end{align*}
and satisfies the following inequality
\begin{align*}
\norm{}{\bv \cdot \bw }^2 & =  \sum_{k=0}^{\cK-1}\sum_{j=0}^{\cJ-1} v_{k,j}^2 
w_{k,j}^2 \Delta x \Delta y \\
&\leq \frac{1}{\Delta x \Delta y}\Big( \sum_{k=0}^{\cK-1}\sum_{j=0}^{\cJ-1} 
v_{k,j}^2 \Delta x \Delta y \Big)\Big( \sum_{k=0}^{\cK-1}\sum_{j=0}^{\cJ-1} 
w_{k,j}^2 \Delta x \Delta y \Big),
\end{align*}
that is to say 
\begin{align}\label{eq:Hadamard_inequality}
\norm{}{\bv \cdot \bw } \leq \frac{1}{\sqrt{\Delta x \Delta y}} 
\norm{}{\bv}\norm{}{\bw}.
\end{align}

To define the schemes, we make use of the following discrete operators
\begin{align*}
\delta^{<1>}_k f_{k,j} := \frac{f_{k+1 , j} -f_{k-1 , j}   }{2\Delta x},
\end{align*}
\begin{align*}
\delta^{<1>}_j f_{k,j} := \frac{f_{k , j+1} -f_{k , j-1}   }{2\Delta y},
\end{align*}
\begin{align*}
\delta^{<2>}_{kk} f_{k,j} := \frac{f_{k+1 , j} - 2f_{k , j} + f_{k-1 , j}   
}{\Delta x^2},
\end{align*}
\begin{align*}
\delta^{<2>}_{jj} f_{k,j} := \frac{f_{k, j+1} - 2f_{k , j} + f_{k , j-1}   
}{\Delta y^2}.
\end{align*}
Sometimes we shall also use
\begin{align*}
\delta^{+}_k f_{k,j} := \frac{f_{k+1 , j} -f_{k, j}   }{\Delta x},
\end{align*}
\begin{align*}
\delta^{+}_j f_{k,j} := \frac{f_{k , j+1} -f_{k , j}   }{\Delta y},
\end{align*}
and
\begin{align*}
\delta^{-}_k f_{k,j} := \frac{f_{k , j} -f_{k-1, j}   }{\Delta x},
\end{align*}
\begin{align*}
\delta^{-}_j f_{k,j} := \frac{f_{k , j} -f_{k , j-1}   }{\Delta y},
\end{align*}
We have the following results (see \cite{Yuto}).
\begin{lemma}\label{lemma:symmetry_and_skew}
The operators defined above are such that
\begin{tiny}
\begin{align*}
&\sum_{j=0}^{\cJ-1} \sum_{k=0}^{\cK-1} f_{k,j} (\delta^{<2>}_{kk} + 
\delta^{<2>}_{jj} ) g_{k,j} \Delta x \Delta y = \sum_{j=0}^{\cJ-1} 
\sum_{k=0}^{\cK-1} g_{k,j} (\delta^{<2>}_{kk} + 
\delta^{<2>}_{jj} ) f_{k,j} \Delta x \Delta y,\\
&\sum_{j=0}^{\cJ-1} \sum_{k=0}^{\cK-1} f_{k,j} \delta^{<1>}_{k} g_{k,j} \Delta 
x \Delta y = -\sum_{j=0}^{\cJ-1} \sum_{k=0}^{\cK-1} g_{k,j} \delta^{<1>}_{k} 
f_{k,j} \Delta x \Delta y,\\
&\sum_{j=0}^{\cJ-1} \sum_{k=0}^{\cK-1} f_{k,j} \delta^{<1>}_{j} g_{k,j} \Delta 
x \Delta y = -\sum_{j=0}^{\cJ-1} \sum_{k=0}^{\cK-1} g_{k,j} \delta^{<1>}_{j} 
f_{k,j} \Delta x \Delta y,\\
\end{align*}
\end{tiny}
\end{lemma}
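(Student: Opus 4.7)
The plan is to establish each of the three identities by discrete \emph{summation by parts}, which is the standard tool that mirrors integration by parts in the continuous case. The periodic boundary conditions (the domain is the flat torus, so indices are taken modulo $\cK$ in the $k$-direction and modulo $\cJ$ in the $j$-direction) are essential because they kill all boundary terms: there is no truly ``first'' or ``last'' index.

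I would begin with the skew-symmetry of $\delta^{<1>}_k$, since it is the cleanest case. The key is to look at the inner sum
\begin{equation*}
\sum_{k=0}^{\cK-1} f_{k,j}(g_{k+1,j} - g_{k-1,j}),
\end{equation*}
split it into two sums, and reindex $k \mapsto k-1$ in the first and $k \mapsto k+1$ in the second. Because the summand is $\cK$-periodic in $k$, shifting the index leaves the range $\{0,\ldots,\cK-1\}$ unchanged, so the result is $\sum_{k=0}^{\cK-1}(f_{k-1,j}-f_{k+1,j})g_{k,j}$, which is exactly $-\sum_{k=0}^{\cK-1} g_{k,j}(f_{k+1,j}-f_{k-1,j})$. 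Dividing by $2\Delta x$, multiplying by $\Delta x\Delta y$, and summing over $j$ yields the identity. The proof of skew-symmetry of $\delta^{<1>}_j$ is identical with the roles of $k$ and $j$ interchanged.

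For the symmetry of $\delta^{<2>}_{kk} + \delta^{<2>}_{jj}$, I would observe that $\delta^{<2>}_{kk} = \delta^+_k\delta^-_k$ (and similarly for $j$), then apply summation by parts in the $k$-variable twice. A first reindexing shows $\sum_{k} f_{k,j}\,\delta^+_k h_{k,j} = -\sum_{k}(\delta^-_k f_{k,j})\,h_{k,j}$ under periodicity; applying this with $h = \delta^-_k g$ gives
\begin{equation*}
\sum_{k=0}^{\cK-1} f_{k,j}\,\delta^{<2>}_{kk} g_{k,j} \;=\; -\sum_{k=0}^{\cK-1}(\delta^-_k f_{k,j})(\delta^-_k g_{k,j}),
\end{equation*}
which is manifestly symmetric in $f$ and $g$. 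Reversing the roles then produces $\sum_k g_{k,j}\,\delta^{<2>}_{kk} f_{k,j}$ after summing in $j$. The same argument in the $j$-direction handles $\delta^{<2>}_{jj}$, and the two identities add.

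The main obstacle is nothing conceptual but purely bookkeeping: one must be careful that reindexing shifts such as $k \mapsto k \pm 1$ are legitimate, which is precisely where the periodic identification $f_{-1,j}=f_{\cK-1,j}$ and $f_{\cK,j}=f_{0,j}$ (and similarly in $j$) is used to ensure no boundary contributions survive. Once periodicity is acknowledged, all three identities reduce to a few index shifts.
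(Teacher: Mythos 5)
Your proof is correct and follows the standard route: the paper itself only cites \cite{Yuto} for this lemma, but the summation-by-parts identity you derive for $\delta^{<2>}_{kk}$ is exactly the one the authors invoke later (in the appendix proof of Lemma~\ref{lemma:first_equality_scheme1}), and your index-shift argument for $\delta^{<1>}_k$ under periodicity is the same elementary mechanism. Nothing is missing.
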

\begin{corollary}\label{corollary:identities_diff_op}
The following identities hold:
\begin{align*}
&\sum_{j=0}^{\cJ-1} \sum_{k=0}^{\cK-1}(\delta_{kk}^{<2>} + \delta_{jj}^{<2>}) 
f_{k,j}^{(n)}  \Delta x 
\Delta y = 0,\\
&\sum_{j=0}^{\cJ-1} \sum_{k=0}^{\cK-1} \delta_{k}^{<1>} f_{k,j}^{(n)}  \Delta x 
\Delta y = 0,\\
&\sum_{j=0}^{\cJ-1} \sum_{k=0}^{\cK-1}  \delta_{j}^{<1>} f_{k,j}^{(n)}  \Delta 
x \Delta y = 0.
\end{align*}
\end{corollary}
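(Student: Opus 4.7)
The plan is to deduce the three identities of Corollary \ref{corollary:identities_diff_op} directly from Lemma \ref{lemma:symmetry_and_skew} by specializing one of the factors to the constant grid function $\mathbf{1}$, with entries $1_{k,j} = 1$ for all $k,j$.

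First I would observe that the three discrete derivative operators annihilate constants. Indeed, by their defining finite-difference formulas, $\delta_k^{<1>} 1_{k,j} = 0$, $\delta_j^{<1>} 1_{k,j} = 0$, and $(\delta_{kk}^{<2>} + \delta_{jj}^{<2>})1_{k,j} = 0$, since in each case the numerator is a difference of ones.

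Next I would apply Lemma \ref{lemma:symmetry_and_skew} with $f_{k,j} = 1$ and $g_{k,j} = f_{k,j}^{(n)}$. For the Laplacian identity, the symmetry relation gives
\begin{equation*}
\sum_{j,k} 1 \cdot (\delta_{kk}^{<2>} + \delta_{jj}^{<2>}) f_{k,j}^{(n)} \, \Delta x \Delta y \;=\; \sum_{j,k} f_{k,j}^{(n)} \cdot (\delta_{kk}^{<2>} + \delta_{jj}^{<2>}) 1 \, \Delta x \Delta y \;=\; 0,
\end{equation*}
which is the first identity. For the two first-order identities, the skew-symmetry relations of Lemma \ref{lemma:symmetry_and_skew} with $f \equiv 1$ yield
\begin{equation*}
\sum_{j,k} \delta_k^{<1>} f_{k,j}^{(n)}\, \Delta x \Delta y \;=\; -\sum_{j,k} f_{k,j}^{(n)} \cdot \delta_k^{<1>} 1 \, \Delta x \Delta y \;=\; 0,
\end{equation*}
and analogously for $\delta_j^{<1>}$.

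There is no real obstacle here: the corollary is an immediate specialization of Lemma \ref{lemma:symmetry_and_skew}. The only thing one should keep in mind is that Lemma \ref{lemma:symmetry_and_skew} itself implicitly uses the periodic boundary conditions on $\Omega$ (so that telescoping at the boundary closes up correctly); once that lemma is granted, the corollary is one line per identity.
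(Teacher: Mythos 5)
Your proposal is correct and matches the intended derivation: the paper states this as an immediate corollary of Lemma~\ref{lemma:symmetry_and_skew} without writing out a proof, and specializing the lemma to the constant grid function $\mathbf{1}$ (which all three operators annihilate) is exactly the one-line argument the word ``corollary'' is pointing to. Your remark that the periodic boundary conditions are what make the lemma (and hence the telescoping) work is also accurate.
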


We can think of the operators introduced above as of matrices, acting on 
vectors in $\bR^{\cK \times \cJ}$. 
We use the notation $ D^{<1>}_{x} $ and $ D^{<1>}_{y} $ to denote 
the matrices associated to $\delta^{<1>}_k $ and $ \delta^{<1>}_j $, 
and we denote by $ D^{<2>}$ the 
matrix associated to $\delta_{kk}^{<2>} + \delta_{jj}^{<2>}$. With an abuse 
of notation we use $\bQ$ to denote both $1 -\alpha^2 D^{<2>}$ and $1 - \alpha^2
\delta_{kk}^{<2>} - \alpha^2
\delta_{jj}^{<2>}$. The following estimates on their norms hold:
\begin{lemma}\label{lemma:norms_discrete_operators}
The discrete operators fulfil the following inequalities.
\begin{align*}
&\norm{}{D^{<1>}_{x}} \leq \frac{1}{\Delta x}, &&\norm{}{D^{<1>}_{y}} \leq 
\frac{1}{\Delta y},\\
&\norm{}{D^{<2>}} \leq 4\Big( \frac{1}{\Delta y^2} + \frac{1}{\Delta 
y^2}\Big),&&\norm{}{(1 - \alpha^2 D^{<2>})^{-1}} \leq 1.
\end{align*}
\end{lemma}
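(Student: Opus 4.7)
The plan is to treat each inequality separately, using only periodicity and the elementary identities already recorded in Lemma~\ref{lemma:symmetry_and_skew} together with the triangle and $(a\pm b)^2\leq 2(a^2+b^2)$ inequalities. Throughout, the bounds will be on operator norms induced by the inner product $\inner{\cdot}{\cdot}$, so shift operators on periodic grids are isometries.

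For the first-order operators, I would write $(\delta^{<1>}_k f)_{k,j}=(f_{k+1,j}-f_{k-1,j})/(2\Delta x)$, square, and use $(a-b)^2\leq 2(a^2+b^2)$ to obtain
\begin{equation*}
\norm{}{\delta^{<1>}_k f}^2 \leq \frac{1}{2\Delta x^2}\sum_{k,j}\bigl(f_{k+1,j}^2+f_{k-1,j}^2\bigr)\Delta x\Delta y = \frac{1}{\Delta x^2}\norm{}{f}^2,
\end{equation*}
where the last equality uses the periodic reindexing $\sum f_{k\pm 1,j}^2=\sum f_{k,j}^2$. The bound on $D^{<1>}_y$ is identical with $j$-shifts.

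For $D^{<2>}$ I would split via the triangle inequality $\norm{}{D^{<2>}f}\leq\norm{}{\delta^{<2>}_{kk}f}+\norm{}{\delta^{<2>}_{jj}f}$ and bound each term by a direct three-point estimate: from $|(\delta^{<2>}_{kk}f)_{k,j}|\leq (|f_{k+1,j}|+2|f_{k,j}|+|f_{k-1,j}|)/\Delta x^2$ and another application of the periodic reindexing (which makes $\norm{}{f_{k\pm 1,\cdot}}=\norm{}{f}$), one gets $\norm{}{\delta^{<2>}_{kk}f}\leq (4/\Delta x^2)\norm{}{f}$, and similarly in $y$. Summing yields the stated bound.

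The only step with a bit of content is the bound on $\norm{}{\bQ^{-1}}$. Here I would show that $\bQ=1-\alpha^2 D^{<2>}$ is symmetric positive definite with respect to $\inner{\cdot}{\cdot}$, with spectrum in $[1,\infty)$; the inverse then automatically satisfies $\norm{}{\bQ^{-1}}\leq 1$. Symmetry of $\bQ$ follows from the first identity in Lemma~\ref{lemma:symmetry_and_skew}. For positivity, the cleanest route is a summation-by-parts identity for the discrete Laplacian in the form
\begin{equation*}
\inner{f}{-D^{<2>}f} = \norm{}{\delta^+_k f}^2+\norm{}{\delta^+_j f}^2 \geq 0,
\end{equation*}
which one proves by using the periodic analogue of $(\delta^+_k)^*=-\delta^-_k$ together with $\delta^{<2>}_{kk}=\delta^-_k\delta^+_k$ (and likewise in $y$). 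Consequently $\inner{f}{\bQ f}=\norm{}{f}^2+\alpha^2(\norm{}{\delta^+_k f}^2+\norm{}{\delta^+_j f}^2)\geq\norm{}{f}^2$, which by Cauchy--Schwarz gives $\norm{}{\bQ f}\geq\norm{}{f}$, i.e.\ $\norm{}{\bQ^{-1}}\leq 1$.

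I do not anticipate any real obstacle; the bookkeeping for the summation-by-parts identity for $-D^{<2>}$ is the only step where one has to be careful to keep periodic boundary terms vanishing and signs consistent, but it is already implicitly contained in Lemma~\ref{lemma:symmetry_and_skew}.
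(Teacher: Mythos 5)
Your argument is correct. For the two first-order operators you follow essentially the same route as the paper: expand the square, apply $(a-b)^2\leq 2(a^2+b^2)$, and use periodic reindexing. For the two remaining bounds, however, you take a genuinely different path. The paper disposes of $\norm{}{D^{<2>}}$ and $\norm{}{(1-\alpha^2 D^{<2>})^{-1}}$ in one line by appealing to the explicit eigenvalues $\lambda_{k,j}=-\tfrac{4}{\Delta x^2}\sin^2(\cdot)-\tfrac{4}{\Delta y^2}\sin^2(\cdot)$ of the discrete Laplacian, i.e.\ a Fourier diagonalization argument: the spectrum lies in $[-4(\Delta x^{-2}+\Delta y^{-2}),0]$, so $1-\alpha^2\lambda_{k,j}\geq 1$ and both bounds follow at once. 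You instead bound $\norm{}{D^{<2>}}$ by the triangle inequality applied to the three-point stencil (shifts being isometries on the periodic grid), and you obtain $\norm{}{\bQ^{-1}}\leq 1$ from the coercivity estimate $\inner{f}{\bQ f}=\norm{}{f}^2+\alpha^2\bigl(\norm{}{\delta^+_k f}^2+\norm{}{\delta^+_j f}^2\bigr)\geq \norm{}{f}^2$, which via Cauchy--Schwarz gives $\norm{}{\bQ f}\geq\norm{}{f}$ and hence invertibility with the stated bound (the factorization $\delta^{<2>}_{kk}=\delta^-_k\delta^+_k$ and the adjoint relation $(\delta^+_k)^*=-\delta^-_k$ are exactly right under periodicity). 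Your version is more elementary and self-contained --- it never needs the spectral decomposition and would survive on grids or with operators not diagonalized by Fourier modes --- while the paper's spectral argument is shorter and gives the sharp constants directly. One incidental point: the lemma as stated reads $4(\Delta y^{-2}+\Delta y^{-2})$, which is evidently a typo for $4(\Delta x^{-2}+\Delta y^{-2})$; both your proof and the paper's eigenvalue formula yield the latter.
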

\begin{proof}
We notice that
\begin{align*}
\norm{}{D^{<1>}_{x} \bv }^2 &= \sum_{k=0}^{\cK-1}\sum_{j=0}^{\cJ-1} 
(\delta^{<1>}_k v_{k,j})^2  \Delta x \Delta y \\
&= \sum_{k=0}^{\cK-1}\sum_{j=0}^{\cJ-1} \Big( \frac{ v_{k+1,j} - 
v_{k-1,j}}{2\Delta x} \Big)^2  \Delta x \Delta y \\
&\leq \frac{1}{\Delta x ^2} \sum_{k=0}^{\cK-1}\sum_{j=0}^{\cJ-1} \frac{ 
v_{k+1,j}^2 + v_{k-1,j}^2}{2}  \Delta x \Delta y \\
&= \frac{1}{\Delta x ^2} \norm{}{ \bv }^2.
\end{align*}
The same holds trivially for $D^{<1>}_{y}$.

The argument to proof the final two bounds is rather standard, and follows from 
the fact that the discrete Laplace operator has eigenvalues of the form
\begin{align*}
\lambda_{k,j} = -\frac{4}{\Delta x^2}\sin^2(\cdot) -\frac{4}{\Delta 
y^2}\sin^2(\cdot) 
\end{align*}
\end{proof}

\subsection{DVDM for Camassa--Holm}\label{section:DVDM}

Here we briefly review discrete variational derivative methods (DVDM) as developed for the Camassa--Holm equation in~\cite{Yuto}.
The interested reader, however, is invited to read~\cite{matsuo} for a general overview of DVDM.

We start from the Hamiltonian form~\eqref{eq:CH_hamiltonian2} in the case $n=1$.
The velocity $u$ and momentum $m$ are then scalar functions on the spatial domain~$[-1,1]$ (with periodic boundary conditions).
The Hamiltonian density is given by
\[
H = \frac{um}{2},
\]
and the Poisson operator $\Gamma_m$ is given by
\[
	\Gamma_m u = m\partial_x u + \partial_x (m u).
\]
Let us recall conservation of energy in this case:
\begin{align} \label{eq:Hpres}
\frac{\dd}{\dd t}\int_{\Omega} H \dd x
= \int_{\Omega} \frac{\delta H}{\delta m}m_t \dd x
= - \int_{\Omega} \frac{\delta H}{\delta m} (m\partial_x + \partial_x m) \frac{\delta H}{\delta m} \dd x
=0,
\end{align}
where, again, the last equality follows from skew-symmetry of $\Gamma_m$.

In the discrete variational derivative method,
we try to copy this structure; namely, we try to find
a discrete version of the variational derivative $\frac{\delta H}{\delta m}$ so that
it replicates the first equality of~\eqref{eq:Hpres}.
Then we define a scheme with it analogously to the Hamiltonian form~\eqref{eq:CH_hamiltonian2}.

Let us denote numerical solutions by
$M_k^{(n)}$ and $U_k^{(n)}$, where $k$ and $n$ denotes the indexes in $x$ and 
$t$ directions,
respectively.
(This definition will be overridden later for the multi-dimensional case.)
In view of the continuous definition, we define $M_k^{(n)} = (1 - \alpha^2 
\delta^{<2>}_{kk}) U_k^{(n)},$ that is,
$
M^{(n)} = \bQ U^{(n)}.
$
Let us then define a discrete version of the Hamiltonian by
\[
H_k^{(n)} = \frac{U_k^{(n)}M_k^{(n)}}{2}.
\]
To find a discrete version of the variational derivative,
we consider the difference
\begin{align}
&\frac{1}{\Delta t}
\left(
 \sum_{k=0}^{\cK-1} H_k^{(n+1)} \Delta x
 -
 \sum_{k=0}^{\cK-1} H_k^{(n)} \Delta x
\right)  \nonumber\\
&= \sum_{k=0}^{\cK-1} \left(
  \frac{M_k^{(n+\frac12)}}{2} \frac{U_k^{(n+1)} - U_k^{(n)}}{\Delta t} 
  +
  \frac{U_k^{(n+\frac12)}}{2} \frac{M_k^{(n+1)} - M_k^{(n)}}{\Delta t} 
\right) \Delta x \\ \nonumber \\
&=  \sum_{k=0}^{\cK-1} U_k^{(n+\frac12)}  \frac{M_k^{(n+1)} - M_k^{(n)}}{\Delta 
t} \Delta x. \nonumber
\end{align}
Here, we introduced an abbreviation
\[
M_k^{(n+\frac12)} := \frac{ M_k^{(n+1)} + M_k^{(n)} }{2}.
\]
We will use similar abbreviations throughout this paper.
This reveals a candidate for the discrete variational derivative, namely,
\[
\frac{\delta H}{\delta(\bM^{(n+1)} , \bM^{(n)})}_{k} := 
U_{k}^{(n+\frac12)}.
\]
Finally, we define a discrete scheme as follows.
\begin{align}\label{eq:1d_scheme}
\frac{M_k^{(n+1)} - M_k^{(n)}}{\Delta t}
=
\left( M_k^{(n+\frac12)} \delta_k^{<1>} + \delta_k^{<1>} M_k^{(n+\frac12)} 
\right)
\frac{\delta H}{\delta(\bM^{(n+1)} , \bM^{(n)})}_{k}.
\end{align}

This scheme keeps the discrete Hamiltonian conservation law:
\[
  \sum_{k=0}^{\cK-1} H_k^{(n)} \Delta x
  =
  \sum_{k=0}^{\cK-1} H_k^{(0)} \Delta x ,\qquad n=1,2,\ldots
\]
The proof proceeds in the same way as in the continuous case~\eqref{eq:Hpres}:
the first equality is guaranteed by the derivation of the discrete variational derivative;
the second is the definition of the scheme itself;
and finally, the third is from the skew-symmetry of the discrete operator
$M_k^{(n+\frac12)} \delta_k^{<1>} + \delta_k^{<1>} M_k^{(n+\frac12)}$.

In what follows, we consider extensions of~\eqref{eq:1d_scheme} to the multi-dimensional case.


\section{First Scheme: Implicit, Energy-Momentum Conserving}\label{section:first_scheme}
All the schemes presented in this and in the forthcoming sections are 
generalizations of the ones in~\cite{Yuto}. We start by 
introducing the scheme naturally obtained by taking the discrete energy to be the real 
energy evaluated at time~$t_n$.
\subsection{Derivation of the Scheme}
We define discrete quantities:
\begin{align*}
M_{i;k,j}^{(n)}\quad \mbox{and} \quad U_{i;k,j}^{(n)},
\end{align*}
where the index $i$ can either be $1$ or $2$ and refers to the component of the 
solution, the indexes $j$ and $k$ refers respectively to the $x$ and $y$ 
direction, and the index $(n)$ refers to the time instant we consider. We 
recall that $ M^{(n)}_{i} = \bQ U^{(n)}_{i},\,i=1,2 $, which 
component-wise means:
\begin{align*}
M^{(n)}_{i;k,j} = (1 - \alpha^2\delta^{<2>}_{kk} -  \alpha^2\delta^{<2>}_{jj} 
)U^{(n)}_{i;k,j}, \quad i=1,2.
\end{align*}
We define a discrete energy function given by:
\begin{align}\label{eq:disc_energy_scheme1}
H_{k,j}^{(n)} = \frac{ M_{1;k,j}^{(n)} U_{1;k,j}^{(n)} + M_{2;k,j}^{(n)} 
U_{2;k,j}^{(n)}}{2}.
\end{align}
This is the discrete counterpart to $H = \frac{\bm \cdot 
\bu}{2}$.
We have the following lemma (see 
Section~\ref{sec:om_proofs_first_equality_scheme1} for a proof).
\begin{lemma}\label{lemma:first_equality_scheme1}
For the discrete energy defined in \eqref{eq:disc_energy_scheme1} the following 
identity holds true for any $n\geq0$:
\begin{align*}
&\frac{1}{\Delta t}\Big( 
\sum_{j=0}^{\cJ-1}\sum_{k=0}^{\cK-1}H_{k,j}^{(n+1)}\Delta x 
\Delta y - \sum_{j=0}^{\cJ-1}\sum_{k=0}^{\cK-1}H_{k,j}^{(n)}\Delta x 
\Delta y\Big) \\
&= \sum_{j=0}^{\cJ-1}\sum_{k=0}^{\cK-1} \Big(
U_{1;k,j}^{(n+\frac12)} \frac{M_{1;k,j}^{(n+1)} - M_{1;k,j}^{(n)}}{\Delta t} + 
U_{2;k,j}^{(n+\frac12)}\frac{M_{2;k,j}^{(n+1)} - M_{2;k,j}^{(n)}}{\Delta 
t}\Big)\Delta x 
\Delta y.
\end{align*}
\end{lemma}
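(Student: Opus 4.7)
The plan is to adapt the argument of the one-dimensional case shown in Section~\ref{section:DVDM} component-by-component, with the one new ingredient being the self-adjointness of the discrete Helmholtz operator $\bQ$.

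First I would use the elementary product-rule identity
\[
ab - cd = \frac{a+c}{2}(b-d) + \frac{b+d}{2}(a-c),
\]
applied at each grid point $(k,j)$ with $a=M_{i;k,j}^{(n+1)}$, $c=M_{i;k,j}^{(n)}$, $b=U_{i;k,j}^{(n+1)}$, $d=U_{i;k,j}^{(n)}$, for $i=1,2$. Dividing by $\Delta t$, summing over $k,j$, and multiplying by $\Delta x\,\Delta y$, the left-hand side of the claimed identity becomes
\[
\frac{1}{2}\sum_{i=1}^{2}\sum_{j,k}\!\Big(M_{i;k,j}^{(n+\frac12)}\,\frac{U_{i;k,j}^{(n+1)}-U_{i;k,j}^{(n)}}{\Delta t} + U_{i;k,j}^{(n+\frac12)}\,\frac{M_{i;k,j}^{(n+1)}-M_{i;k,j}^{(n)}}{\Delta t}\Big)\Delta x \Delta y.
\]
So the statement reduces to showing that, for each $i$, the two cross terms coincide after summation, namely
\[
\sum_{j,k} M_{i;k,j}^{(n+\frac12)}\,(U_{i;k,j}^{(n+1)}-U_{i;k,j}^{(n)})\,\Delta x\Delta y = \sum_{j,k} U_{i;k,j}^{(n+\frac12)}\,(M_{i;k,j}^{(n+1)}-M_{i;k,j}^{(n)})\,\Delta x\Delta y.
\]

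Next I would invoke linearity of $\bQ=1-\alpha^2(\delta^{<2>}_{kk}+\delta^{<2>}_{jj})$ to rewrite both sides in terms of $U_i$ alone: since $M_i^{(n)}=\bQ U_i^{(n)}$, we have $M_i^{(n+\frac12)}=\bQ U_i^{(n+\frac12)}$ and $M_i^{(n+1)}-M_i^{(n)}=\bQ(U_i^{(n+1)}-U_i^{(n)})$. The required identity then becomes $\inner{\bQ U_i^{(n+\frac12)}}{U_i^{(n+1)}-U_i^{(n)}} = \inner{U_i^{(n+\frac12)}}{\bQ(U_i^{(n+1)}-U_i^{(n)})}$, i.e.\ self-adjointness of $\bQ$ with respect to the discrete inner product. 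This is exactly the first identity of Lemma~\ref{lemma:symmetry_and_skew} applied to $f=U_i^{(n+\frac12)}$ and $g=U_i^{(n+1)}-U_i^{(n)}$ (and the trivial fact that the identity operator is self-adjoint).

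Putting the two observations together immediately gives the claimed equality. I do not expect any real obstacle here: the product-splitting identity is algebraic, and the only structural fact used is the symmetry of the discrete Laplacian under periodic boundary conditions, already recorded in Lemma~\ref{lemma:symmetry_and_skew}. The mild point to flag is that the periodic indexing is crucial for $\bQ$ to be self-adjoint—no boundary terms arise in the summation by parts because the grid wraps around.
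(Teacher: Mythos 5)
Your proposal is correct and follows essentially the same route as the paper's own proof: the product-splitting identity $ab-cd=\tfrac{a+c}{2}(b-d)+\tfrac{b+d}{2}(a-c)$ applied componentwise, followed by the self-adjointness of $\bQ$ (equivalently, the first identity of Lemma~\ref{lemma:symmetry_and_skew}) to convert the $M^{(n+\frac12)}(U^{(n+1)}-U^{(n)})$ cross term into the $U^{(n+\frac12)}(M^{(n+1)}-M^{(n)})$ term. The paper carries out the summation-by-parts argument for $\delta^{<2>}_{kk}$ explicitly rather than citing the lemma, but the content is identical.
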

As in the one-dimensional case, it is natural to define a ``discrete 
variational derivative'' which approximates the continuous one by
\begin{align}\label{eq:DVD_expression1}
\frac{\delta H}{\delta(\bM^{(n+1)} , \bM^{(n)})}_{k,j} := 
 \begin{bmatrix}
      U_{1;k,j}^{(n+\frac12)}\\
      U_{2;k,j}^{(n+\frac12)}
     \end{bmatrix}.
\end{align}

We denote by $\widetilde{\Gamma}^{(n+\frac12)}_{\bm}$ the discrete version of 
${\Gamma}_{\bm}$, discretized at time $n+\frac12$. This is not the only 
possible way to discretize the operator; any discretization of ${\Gamma}_{\bm}$ 
which is skew-symmetric with respect to the inner product on the discrete 
spaces would be fine. However our choice of  
$\widetilde{\Gamma}^{(n+\frac12)}_{\bm}$ is quite natural for the scheme we are 
trying to develop, beside having the advantage of being symmetric. 

If we introduce the quantity $\uM_{k,j}^{(\cdot)}:= 
[M_{1;k,j}^{(\cdot)},M_{2;k,j}^{(\cdot)}]$, the scheme can be written in 
compact 
form as 
\begin{align}\label{eq:scheme_1_compact}
\frac{\uM_{k,j}^{(n+1)} - \uM_{k,j}^{(n)}}{\Delta t} = 
-\widetilde{\Gamma}^{(n+\frac12)}_{\bm}\frac{\delta H}{\delta(\bM^{(n+1)} 
, \bM^{(n)})}_{k,j}.
\end{align}

This schemes has the strong disadvantage of being non-linear, and this raises 
some serious practical issues for its usage tout-court 
as an integrator. However, by using it as a 
corrector, in the predictor-corrector scheme developed in 
Section~\ref{section:predictor-corrector}, with a variable number of 
corrections steps, 
which depends on a relative error, we can successfully and efficiently 
implement it, 
saving all its good properties (due to a fixed point argument). We refer to 
this scheme as to Scheme 1.
%

Component-wise, for the $2$-dimensional problem we are considering, it reads:
\begin{align*}
\frac{M_{1;k,j}^{(n+1)} - M^{(n)}_{1;k,j}}{\Delta t} &= - \Big[ 
\frac{M_{1;k,j}^{(n)} +
M_{1;k,j}^{(n+1)} }{2} \delta^{<1>}_k \frac{U_{1;k,j}^{(n)} + 
U_{1;k,j}^{(n+1)}}{2} \\ 
&+ \frac{M_{2;k,j}^{(n)} + M_{2;k,j}^{(n+1)}}{2} \delta^{<1>}_k 
\frac{U_{2;k,j}^{(n)} +
U_{2;k,j}^{(n+1)}}{2} \\
&+ \delta^{<1>}_k \Big( \frac{M_{1;k,j}^{(n)} +
M_{1;k,j}^{(n+1)}}{2}  \cdot \frac{U_{1;k,j}^{(n)} + U_{1;k,j}^{(n+1)}}{2} 
\Big) \\&+ 
\delta^{<1>}_j \Big( \frac{M_{1;k,j}^{(n)} +
M_{1;k,j}^{(n+1)}}{2} \cdot \frac{U_{2;k,j}^{(n)} + U_{2;k,j}^{(n+1)}}{2} \Big)
\Big]
\end{align*}
and
\begin{align*}
\frac{M_{2;k,j}^{(n+1)} - M^{(n)}_{2;k,j}}{\Delta t} &= - \Big[ 
\frac{M_{1;k,j}^{(n)} +
M_{1;k,j}^{(n+1)} }{2} \delta^{<1>}_j \frac{U_{1;k,j}^{(n)} + 
U_{1;k,j}^{(n+1)}}{2} \\ 
&+ \frac{M_{2;k,j}^{(n)} + M_{2;k,j}^{(n+1)}}{2} \delta^{<1>}_j 
\frac{U_{2;k,j}^{(n)} +
U_{2;k,j}^{(n+1)}}{2} \\
&+ \delta^{<1>}_k \Big( \frac{M_{2;k,j}^{(n)} +
M_{2;k,j}^{(n+1)}}{2}  \cdot \frac{U_{1;k,j}^{(n)} + U_{1;k,j}^{(n+1)}}{2} 
\Big) \\&+ 
\delta^{<1>}_j \Big( \frac{M_{2;k,j}^{(n)} +
M_{2;k,j}^{(n+1)}}{2} \cdot \frac{U_{2;k,j}^{(n)} + U_{2;k,j}^{(n+1)}}{2} \Big)
\Big].
\end{align*}
Here $\cdot$ denotes the Hadamard product (component-wise product).

\subsection{Conservation Properties and Solvability}
The first scheme preserve the discrete energy, as expected, and has as a 
by-produce the further advantage of preserving the linear momenta. We can indeed prove 
the following result:
\begin{theorem}\label{thm:conserved_quantities_scheme1}
Under the discrete periodic boundary conditions, the numerical solution 
produced by Scheme 1 conserves the following invariants, for each $n 
=1,2,\ldots$:
\begin{align*}
 \sum_{j=0}^{\cJ-1}\sum_{k=0}^{\cK-1} U_{\cdot,k,j}^{(n)} \Delta x 
\Delta y =  \sum_{j=0}^{\cJ-1}\sum_{k=0}^{\cK-1} U_{\cdot,k,j}^{(0)} \Delta x 
\Delta y,\\
 \sum_{j=0}^{\cJ-1}\sum_{k=0}^{\cK-1} H_{k,j}^{(n)} \Delta x 
\Delta y =  \sum_{j=0}^{\cJ-1}\sum_{k=0}^{\cK-1} H_{k,j}^{(0)} \Delta x 
\Delta y.
\end{align*}
\end{theorem}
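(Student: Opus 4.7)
My plan is to mirror, at the discrete level, the continuous derivations that prove conservation of the energy and of the linear momenta. The energy identity is essentially already set up by Lemma~\ref{lemma:first_equality_scheme1}; the linear momentum identity will require a more careful use of the summation-by-parts identities of Lemma~\ref{lemma:symmetry_and_skew} together with the Helmholtz structure $\bM = \bQ\bU$.

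\textbf{Energy.} I would start from Lemma~\ref{lemma:first_equality_scheme1}, which rewrites the discrete time derivative of $\sum_{j,k} H_{k,j}^{(n)}\Delta x\Delta y$ as the pairing of $\uU^{(n+\frac12)}$ with the difference quotient $(\uM^{(n+1)}-\uM^{(n)})/\Delta t$. Substituting the compact form~\eqref{eq:scheme_1_compact} of Scheme~1 on the right-hand side turns this pairing into the discrete quadratic form $-\inner{\uU^{(n+\frac12)}}{\widetilde\Gamma^{(n+\frac12)}_{\bm}\uU^{(n+\frac12)}}$, which vanishes by the asserted skew-symmetry of $\widetilde\Gamma^{(n+\frac12)}_{\bm}$ with respect to the discrete inner product (the discrete counterpart of Lemma~\ref{lem:skew_symmetry}). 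This gives conservation of the discrete energy.

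\textbf{Linear momenta.} For each component $i\in\{1,2\}$, I would first observe that, by Corollary~\ref{corollary:identities_diff_op}, the discrete Laplacian part in $M_{i;k,j}^{(n)} = U_{i;k,j}^{(n)} - \alpha^2(\delta^{<2>}_{kk}+\delta^{<2>}_{jj})U_{i;k,j}^{(n)}$ has vanishing grid sum, so $\sum_{j,k} M_{i;k,j}^{(n)} = \sum_{j,k} U_{i;k,j}^{(n)}$ for all $n$. It therefore suffices to show that $\sum_{j,k}(M_{i;k,j}^{(n+1)}-M_{i;k,j}^{(n)})=0$. Summing the component-wise form of Scheme~1, the two terms of the form $\delta_k^{<1>}(\cdot)$ and $\delta_j^{<1>}(\cdot)$ vanish immediately by Corollary~\ref{corollary:identities_diff_op}. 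The remaining terms all have the shape $\sum_{j,k} M_r^{(n+\frac12)}\,\delta^{<1>}_{\bullet} U_r^{(n+\frac12)}$, where $\bullet$ is $k$ or $j$ depending on which component of $\bm$ is being tracked.

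To kill these terms, I would apply summation by parts (Lemma~\ref{lemma:symmetry_and_skew}) to rewrite them as $-\sum U_r\,\delta^{<1>}_{\bullet} M_r$, and then substitute $M_r = (1-\alpha^2 D^{<2>})U_r$. The piece $-\sum U_r\,\delta^{<1>}_{\bullet}U_r$ vanishes at once by skew-symmetry of $\delta^{<1>}_{\bullet}$. The remaining piece $\alpha^2\sum U_r\,\delta^{<1>}_{\bullet} D^{<2>} U_r$ vanishes because the composition $\delta^{<1>}_{\bullet} D^{<2>}$ is itself skew-symmetric: both operators are circulant finite-difference operators on the periodic grid, hence commute, and a commuting composition of a skew operator and a symmetric one is again skew. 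The main obstacle I foresee is precisely this last step: unlike the energy case, where the skew-symmetry is packaged inside $\widetilde\Gamma_{\bm}$, here one must open up the Helmholtz operator $\bQ$ and combine the commutativity of the discrete derivatives with the skewness of $\delta^{<1>}_{\bullet}$. This is the exact discrete analog of the twofold integration-by-parts identity $\int u_j\,\partial_i m_j\ud\bx = 0$ that underlies conservation of the continuous linear momenta.
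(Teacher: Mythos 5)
Your proposal is correct and follows essentially the same route as the paper's proof: energy conservation via Lemma~\ref{lemma:first_equality_scheme1} and the skew-symmetry of $\widetilde\Gamma^{(n+\frac12)}_{\bm}$, and momentum conservation by reducing to $\sum \uM$ via Corollary~\ref{corollary:identities_diff_op}, killing the total-derivative terms, summing by parts, and invoking skew-symmetry of $\delta^{<1>}_{\bullet}\bQ$. Your explicit justification of that last skew-symmetry (commutativity of the circulant difference operators on the periodic grid combined with the symmetry of $D^{<2>}$) is a welcome elaboration of a step the paper merely asserts, but it is the same argument.
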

\begin{proof}
The core of the proof is based on the skew-symmetry of 
$\widetilde{\Gamma}^{(n+\frac12)}_{\bm}$.
From Lemma~\ref{lemma:first_equality_scheme1} we know that the following holds:
\begin{align*}
&\frac{1}{\Delta t}\sum_{j=0}^{\cJ-1}\sum_{k=0}^{\cK-1}(H_{k,j}^{(n+1)} 
-H_{k,j}^{(n)} )\Delta x 
\Delta y \\
&=\sum_{j=0}^{\cJ-1}\sum_{k=0}^{\cK-1} \Big(
U_{1;k,j}^{(n+\frac12)} \frac{M_{1;k,j}^{(n+1)} - M_{1;k,j}^{(n)}}{\Delta t} + 
U_{2;k,j}^{(n+\frac12)}\frac{M_{2;k,j}^{(n+1)} - M_{2;k,j}^{(n)}}{\Delta 
t}\Big)\Delta x 
\Delta y \\
&=\sum_{j=0}^{\cJ-1}\sum_{k=0}^{\cK-1} \Big( \frac{\delta H}{\delta(\bM^{(n+1)} 
, \bM^{(n)})}_{k,j} \cdot \frac{\uM_{k,j}^{(n+1)} - \uM_{k,j}^{(n)}}{\Delta t} 
\Big) \Delta x 
\Delta y
\end{align*}
We now use the fact that our scheme is defined as in 
\eqref{eq:scheme_1_compact}, so that we can obtain the following:
\begin{align*}
&\frac{1}{\Delta t}\sum_{j=0}^{\cJ-1}\sum_{k=0}^{\cK-1}(H_{k,j}^{(n+1)} 
-H_{k,j}^{(n)} )\Delta x 
\Delta y \\
&=\sum_{j=0}^{\cJ-1}\sum_{k=0}^{\cK-1} \frac{\delta H}{\delta(\bM^{(n+1)} 
, \bM^{(n)})}_{k,j} \cdot \Big(-\widetilde{\Gamma}^{(n+\frac12)}_{\bm} 
\frac{\delta 
H}{\delta(\bM^{(n+1)} 
, \bM^{(n)})}_{k,j}\Big) \Delta x 
\Delta y,
\end{align*}
which in turn is equal to zero since $\widetilde{\Gamma}^{(n+\frac12)}_{\bm}$ 
is skew-symmetric.

To prove the first part of the claim we make use of 
Corollary~\ref{corollary:identities_diff_op}, which ensure that the claim is 
equivalent to show that 
\begin{align*}
 \sum_{j=0}^{\cJ-1}\sum_{k=0}^{\cK-1} M_{\cdot,k,j}^{(n)} \Delta x 
\Delta y =  \sum_{j=0}^{\cJ-1}\sum_{k=0}^{\cK-1} M_{\cdot,k,j}^{(0)} \Delta x 
\Delta y.
\end{align*}
We show this only for the first component, since the same argument applies to 
the second one.
\begin{align*}
&\sum_{j=0}^{\cJ-1}\sum_{k=0}^{\cK-1} 
\frac{M_{1;k,j}^{(n+1)}-M_{1;k,j}^{(n)}}{\Delta t} \Delta x \Delta y \\
&= -\sum_{j=0}^{\cJ-1}\sum_{k=0}^{\cK-1} \Big( 
M_{1;k,j}^{(n+\frac12)}\delta^{<1>}_k U_{1;k,j}^{(n +\frac12 )}  + 
M_{2;k,j}^{(n+\frac12)} \delta^{<1>}_k U_{2;k,j}^{(n+\frac12)}\\
&\qquad \qquad \qquad+ \delta^{<1>}_k ( M_{1;k,j}^{(n+\frac12)} \cdot 
 U_{1;k,j}^{(n+\frac12)} ) +
\delta^{<1>}_j (M_{1;k,j}^{(n+\frac12)} \cdot U_{2;k,j}^{(n+\frac12)} )
\Big) \Delta x \Delta y.
\end{align*}
The last two terms in the sum disappears by means of 
Corollary~\ref{corollary:identities_diff_op}. We remain therefore 
with:
\begin{align*}
-\sum_{j=0}^{\cJ-1}\sum_{k=0}^{\cK-1} \Big( 
M_{1;k,j}^{(n+\frac12)}\delta^{<1>}_k U_{1;k,j}^{(n +\frac12 )}  + 
M_{2;k,j}^{(n+\frac12)} \delta^{<1>}_k U_{2;k,j}^{(n+\frac12)} \Big)\Delta x 
\Delta y.
\end{align*}
If we apply Lemma~\ref{lemma:symmetry_and_skew} with $f_k = 
M_{1;k,j}^{(n+\frac12)}  $ and $ 
g_k = U_{1;k,j}^{(n+\frac12)}$ first, and $f_k = M_{2;k,j}^{(n+\frac12)}  $ and 
$ 
g_k = U_{2;k,j}^{(n+\frac12)}$ then, we get:
\begin{align*}
\sum_{j=0}^{\cJ-1}\sum_{k=0}^{\cK-1} \Big( 
U_{1;k,j}^{(n+\frac12)}\delta^{<1>}_k M_{1;k,j}^{(n +\frac12 )}  + 
U_{2;k,j}^{(n+\frac12)} \delta^{<1>}_k M_{2;k,j}^{(n+\frac12)} \Big)\Delta x 
\Delta y.
\end{align*}
We can now insert the expression for $M_{2;k,j}^{(n+\frac12)}$, thus obtaining 
the following new two terms:
\begin{align*}
&\sum_{j=0}^{\cJ-1}\sum_{k=0}^{\cK-1} \Big( 
U_{1;k,j}^{(n+\frac12)}\delta^{<1>}_k (1 - \alpha^2\delta^{<2>}_{kk} - 
\alpha^2\delta^{<2>}_{jj} 
) U_{1;k,j}^{(n +\frac12 )}  \Big)\Delta x 
\Delta y \\
&+ \sum_{j=0}^{\cJ-1}\sum_{k=0}^{\cK-1} \Big( 
U_{2;k,j}^{(n+\frac12)} \delta^{<1>}_k (1 - \alpha^2\delta^{<2>}_{kk} - 
\alpha^2\delta^{<2>}_{jj} 
)  U_{2;k,j}^{(n+\frac12)} \Big)\Delta x 
\Delta y.
\end{align*}
The skew-symmetry of the product operators $\delta^{<1>}_k(1 - 
\alpha^2\delta^{<2>}_{kk} 
- 
\alpha^2\delta^{<2>}_{jj} 
) $ and $\delta^{<1>}_j (1 - \alpha^2\delta^{<2>}_{kk} - 
\alpha^2\delta^{<2>}_{jj} 
) $ allows us to conclude that both terms are zero, and the lemma is thus 
proved.
\end{proof}

The following theorem ensures the unique local solvability of Scheme 1, given 
that the time step is small enough (for the proof 
see~\ref{sec:om_proofs_solvability_scheme1}):
\begin{theorem}\label{thm:solvability_scheme1}
The scheme defined by \eqref{eq:scheme_1_compact} produces a unique solution at 
time step $(n+1)$ if we choose $\Delta t$ such that
\begin{align}\label{eq:Scheme_1_condition_for_convergence}
\Delta t \leq \frac{\sqrt{2(\sqrt{5} -2)}}{5}
\sqrt{\frac{\Delta x^3 \Delta y^3}{\Delta x^2 + \Delta y^2}} \frac{1}{K},
\end{align}
where by $K$ we denote $\norm{}{M^{(n)}}$.
In the particular case in which we use the same discretization step in both the 
spatial dimensions, the condition reads
\begin{align*}
\Delta t \leq \frac{\sqrt{\sqrt{5} -2}}{5} \frac{ \Delta x^2}{r_a} .
\end{align*}
\end{theorem}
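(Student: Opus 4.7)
The strategy is to recast the implicit nonlinear scheme~\eqref{eq:scheme_1_compact} as a fixed-point equation in the finite-dimensional space $\bR^{\cK \times \cJ \times 2}$ and apply the Banach contraction mapping theorem. Treating $\bY \coloneqq \uM^{(n+1)}$ as the unknown and setting $K \coloneqq \norm{}{\uM^{(n)}}$, the scheme reads $\bY = T(\bY)$, where
\[
T(\bY) \coloneqq \uM^{(n)} - \Delta t\,\widetilde{\Gamma}^{(n+\frac12)}_{\bm}\bU^{(n+\frac12)},
\]
with $\bM^{(n+\frac12)} = (\bY + \uM^{(n)})/2$ and $\bU^{(n+\frac12)} = \bQ^{-1}\bM^{(n+\frac12)}$. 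Since $\bU^{(n+\frac12)}$ is linear in $\bM^{(n+\frac12)}$, the nontrivial part of $T$ is a symmetric bilinear form in $\bM^{(n+\frac12)}$, and hence quadratic in $\bY$ as a perturbation of $\uM^{(n)}$.

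The next step is to estimate both the size and the Lipschitz constant of this quadratic part on a closed ball $\bar B(\uM^{(n)}, \rho)$. In the explicit component-wise expression of $\widetilde{\Gamma}^{(n+\frac12)}_{\bm}\bU^{(n+\frac12)}$, each of the four bilinear terms (two of product-derivative type and two of derivative-of-product type) is bounded by combining the Hadamard product inequality~\eqref{eq:Hadamard_inequality} with the operator-norm bounds $\norm{}{D^{<1>}_x}\le 1/\Delta x$, $\norm{}{D^{<1>}_y}\le 1/\Delta y$ and $\norm{}{\bQ^{-1}}\le 1$ of Lemma~\ref{lemma:norms_discrete_operators}. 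Summing the four contributions --- two carrying a $1/\Delta x$ weight, two a $1/\Delta y$ weight, each multiplied by $1/\sqrt{\Delta x\Delta y}$ --- and collecting them via Cauchy--Schwarz yields an estimate of the form
\[
\norm{}{\widetilde{\Gamma}^{(n+\frac12)}_{\bm}\bU^{(n+\frac12)}} \leq C\sqrt{\tfrac{\Delta x^2 + \Delta y^2}{(\Delta x\Delta y)^3}}\,\norm{}{\bM^{(n+\frac12)}}^2 ,
\]
for an explicit constant $C$. Bilinearity of the quadratic part then gives, for $\bY, \tilde\bY \in \bar B(\uM^{(n)}, \rho)$,
\[
\norm{}{T(\bY) - T(\tilde\bY)} \leq \Delta t\, C\sqrt{\tfrac{\Delta x^2 + \Delta y^2}{(\Delta x\Delta y)^3}}(K + \rho/2)\norm{}{\bY - \tilde\bY}.
\]

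With these two estimates, the self-mapping requirement $\norm{}{T(\bY) - \uM^{(n)}} \leq \rho$ and the contraction requirement reduce to a pair of scalar inequalities
\[
\Delta t\,C\sqrt{\tfrac{\Delta x^2 + \Delta y^2}{(\Delta x\Delta y)^3}}(K + \rho/2)^2 \leq \rho, \qquad \Delta t\,C\sqrt{\tfrac{\Delta x^2 + \Delta y^2}{(\Delta x\Delta y)^3}}(K + \rho/2) < 1 .
\]
An elementary one-dimensional optimization over $\rho > 0$ --- in which the two conditions are forced to be simultaneously tight, producing a quadratic equation with positive root $\sqrt 5 - 2$ --- then yields the sharp upper bound on $\Delta t$ stated in~\eqref{eq:Scheme_1_condition_for_convergence}, with its peculiar coefficient $\sqrt{2(\sqrt 5 - 2)}/5$. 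Banach's theorem furnishes a unique $\uM^{(n+1)} \in \bar B(\uM^{(n)}, \rho)$, and the isotropic case $\Delta x = \Delta y$ follows by direct substitution.

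The fixed-point machinery itself is entirely standard; the real work lies in the constant-tracking. The four bilinear terms of $\widetilde{\Gamma}^{(n+\frac12)}_{\bm}$ carry distinct geometric weights (different combinations of $\delta^{<1>}_k$, $\delta^{<1>}_j$ and the inner $\bQ^{-1}$), and consolidating them into the compact factor $\sqrt{\Delta x^2 + \Delta y^2}/(\Delta x\Delta y)^{3/2}$ requires a clean Cauchy--Schwarz step. The subsequent one-variable optimization over $\rho$ is elementary but must be performed without slack in order to reproduce the exact numerical coefficient in~\eqref{eq:Scheme_1_condition_for_convergence}; any looser argument still yields the correct scaling $\Delta t \lesssim (\Delta x\Delta y)^{3/2}/\bigl(K\sqrt{\Delta x^2 + \Delta y^2}\bigr)$ but a weaker explicit constant.
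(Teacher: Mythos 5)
Your proposal is correct and follows essentially the same route as the paper's proof: recast the scheme as a fixed-point equation for $\uM^{(n+1)}$, bound the quadratic nonlinearity using the Hadamard product inequality~\eqref{eq:Hadamard_inequality} together with the operator-norm bounds of Lemma~\ref{lemma:norms_discrete_operators}, verify self-mapping and contraction on a ball, and optimize over the ball radius. The only substantive deviation is in the bookkeeping of the constant: the paper centres its ball at the origin with radius $\rho\,r_a$ and obtains $\sqrt{5}-2$ by maximizing $(\rho^2-2)/(1+\rho^4)$ (the factor $\rho^2-2$ coming from repeated use of $(a+b)^2\le 2(a^2+b^2)$) and then checking the contraction constraint a posteriori, whereas your two conditions on a ball centred at $\uM^{(n)}$, if made simultaneously tight, would actually produce a different (in fact cleaner) admissible bound on $\Delta t$ rather than reproducing the coefficient $\sqrt{2(\sqrt{5}-2)}/5$ exactly --- this does not affect the validity of the theorem, since any sufficient condition at least as permissive as~\eqref{eq:Scheme_1_condition_for_convergence} establishes the stated claim.
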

\section{Second Scheme: Explicit, Energy-Momentum Conserving}\label{section:second_scheme}
The second scheme that we want to develop is based on an alternative 
discretization of the energy, obtained by averaging. We do this in such a way 
that the scheme becomes explicit and 
multi-step.

We can immediately notice that now the the discrete 
operator at the right-hand side is different from the one introduced in the 
previous section, since the discretization is centred 
around $n$ rather than around $n+\frac12$.

\subsection{Derivation of the Scheme}
We use the same notation introduced in Section~\ref{section:first_scheme}. The 
discrete energy function is now given by
\begin{align}\label{eq:disc_energy_scheme2}
H_{k,j}^{(n+\frac12 )} = \frac{ M_{1;k,j}^{(n+1)} U_{1;k,j}^{(n)} + 
M_{1;k,j}^{(n)} 
U_{1;k,j}^{(n+1)} + M_{2;k,j}^{(n+1)} 
U_{2;k,j}^{(n)} + M_{2;k,j}^{(n)} 
U_{2;k,j}^{(n+1)}}{4}.
\end{align}
The following Lemma holds (see \ref{sec:om_proofs_first_equality_scheme2}) :
\begin{lemma}\label{lemma:first_equality_scheme2}:
For the discrete energy defined in \eqref{eq:disc_energy_scheme2} the following 
identity holds true for any $n\geq0$:
\begin{align*}
&\sum_{j=0}^{\cJ-1}\sum_{k=0}^{\cK-1}H_{k,j}^{(n +\frac12 )}\Delta x 
\Delta y - \sum_{j=0}^{\cJ-1}\sum_{k=0}^{\cK-1}H_{k,j}^{(n- \frac12 )}\Delta x 
\Delta y \\
&= \sum_{j=0}^{\cJ-1}\sum_{k=0}^{\cK-1}\Big(  
\frac{M_{1;k,j}^{(n+1)} - M_{1;k,j}^{(n-1)}}{2} U_{1;k,j}^{(n)}
+ \frac{M_{2;k,j}^{(n+1)} - M_{2;k,j}^{(n-1)}}{2} U_{2;k,j}^{(n)}
\Big)   \Delta x  
\Delta y.
\end{align*}
\end{lemma}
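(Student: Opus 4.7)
The plan is to compute the pointwise difference $H^{(n+\frac12)}_{k,j} - H^{(n-\frac12)}_{k,j}$, sum it over the grid, and then use the self-adjointness of the discrete Helmholtz operator $\bQ = 1 - \alpha^2\delta^{<2>}_{kk} - \alpha^2\delta^{<2>}_{jj}$ to massage the four resulting sums into the desired form. The key observation is that the definition of $H^{(n+\frac12)}$ is symmetric between the two time levels $n$ and $n+1$, so two of the four product terms will need to be ``transposed'' via self-adjointness before the telescoping becomes visible.

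Concretely, first I would just write out
\[
H^{(n+\frac12)}_{k,j} - H^{(n-\frac12)}_{k,j}
= \tfrac{1}{4}\sum_{i=1}^{2}\bigl(M_{i}^{(n+1)}U_{i}^{(n)} + M_{i}^{(n)}U_{i}^{(n+1)} - M_{i}^{(n)}U_{i}^{(n-1)} - M_{i}^{(n-1)}U_{i}^{(n)}\bigr)_{k,j}.
\]
Summing over $k,j$ and multiplying by $\Delta x\,\Delta y$ leaves eight lattice sums. The second and fourth are the obstacle in so far as they are not already in the target form, since $U$ is evaluated at $n\pm 1$ rather than at $n$.

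The main step is then to note that $\bQ$ is self-adjoint with respect to the discrete inner product: this follows directly from the symmetry of $\delta^{<2>}_{kk}+\delta^{<2>}_{jj}$ stated in Lemma~\ref{lemma:symmetry_and_skew}. Since $\bM^{(\ell)} = \bQ\,\bU^{(\ell)}$ for every $\ell$, we get
\[
\sum_{k,j} M_{i;k,j}^{(n)}U_{i;k,j}^{(n+1)}\Delta x\Delta y
= \inner{\bQ\bU_i^{(n)}}{\bU_i^{(n+1)}}
= \inner{\bU_i^{(n)}}{\bQ\bU_i^{(n+1)}}
= \sum_{k,j}M_{i;k,j}^{(n+1)}U_{i;k,j}^{(n)}\Delta x\Delta y,
\]
and analogously $\sum M_i^{(n)}U_i^{(n-1)} = \sum M_i^{(n-1)}U_i^{(n)}$.

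Substituting these two identities collapses the eight sums into four, each with a factor of~$2$; dividing by~$4$ gives
\[
\sum_{k,j}\bigl(H^{(n+\frac12)}_{k,j} - H^{(n-\frac12)}_{k,j}\bigr)\Delta x\Delta y
= \sum_{k,j}\sum_{i=1}^{2}\frac{M_{i;k,j}^{(n+1)}-M_{i;k,j}^{(n-1)}}{2}\,U_{i;k,j}^{(n)}\,\Delta x\Delta y,
\]
which is precisely the claimed identity. No other tools beyond Lemma~\ref{lemma:symmetry_and_skew} are needed; the only mild subtlety is keeping track of which $M\!\cdot\!U$ pairs need to be swapped via self-adjointness and which are already in the correct form.
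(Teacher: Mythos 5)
Your proposal is correct and follows essentially the same route as the paper's proof in Appendix~\ref{sec:om_proofs_first_equality_scheme2}: both reduce the claim to the self-adjointness of $\bQ = 1-\alpha^2\delta^{<2>}_{kk}-\alpha^2\delta^{<2>}_{jj}$ (from Lemma~\ref{lemma:symmetry_and_skew}) to convert the sums $\sum M_i^{(n)}U_i^{(n\pm1)}$ into $\sum M_i^{(n\pm1)}U_i^{(n)}$. The only difference is cosmetic --- you swap the two offending terms before regrouping, whereas the paper first factors the sum into the two symmetric halves and then applies self-adjointness to one of them --- and the arithmetic in both checks out.
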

It follows from the Lemma that a natural way to define the discrete 
variational derivative is the following:
\begin{align}\label{eq:DVD_expression2}
\frac{\delta H}{\delta(\bM^{(n+1)} , \bM^{(n)}, \bM^{(n-1)})}_{k,j} := 
 \begin{bmatrix}
      U_{1;k,j}^{(n)}\\
      U_{2;k,j}^{(n)}
     \end{bmatrix}.
\end{align}
We denote by $\widetilde{\Gamma}^{(n)}_{\bm}$ the discrete version of 
${\Gamma}_{\bm}$ which is now centred 
around $n$ rather than around $n+\frac12$, as previously noticed. The scheme 
can be written in compact 
form as: 
\begin{align}\label{eq:scheme_2_compact}
\frac{\uM_{k,j}^{(n+1)} - \uM_{k,j}^{(n-1)}}{2\Delta t} = 
-\widetilde{\Gamma}^{(n)}_{\bm} \frac{\delta H}{\delta(\bM^{(n+1)} 
, \bM^{(n)}, \bM^{(n-1)} )}_{k,j}.
\end{align}

Component-wise the scheme \eqref{eq:scheme_2_compact} becomes
\begin{align*}
\frac{M^{(n+1)}_{1;k,j} - M^{(n-1)}_{1;k,j}}{2 \Delta t} &= -\Big[ 
M_{1;k,j}^{(n)} \cdot
(\delta^{<1>}_k
U_{1;k,j}^{(n)}) + M_{2;k,j}^{(n)} \cdot (\delta^{<1>}_k
U_{2;k,j}^{(n)}) \\ 
&\quad + \delta^{<1>}_k (M_{1;k,j}^{(n)} \cdot U_{1;k,j}^{(n)}) + 
\delta^{<1>}_j (M_{1;k,j}^{(n)} \cdot U_{2;k,j}^{(n)}) \Big], \\
\frac{M^{(n+1)}_{2;k,j} - M^{(n-1)}_{2;k,j}}{2 \Delta t} &= 
-\Big[M_{1;k,j}^{(n)} \cdot
(\delta^{<1>}_j
U_{1;k,j}^{(n)}) + M_{2;k,j}^{(n)} \cdot (\delta^{<1>}_j
U_{2;k,j}^{(n)}) \\ 
&\quad + \delta^{<1>}_k (M_{2;k,j}^{(n)} \cdot U_{1;k,j}^{(n)}) + 
\delta^{<1>}_j (M_{2;k,j}^{(n)} \cdot U_{2;k,j}^{(n)}) \Big].
\end{align*}

For the sake of implementation we derive now an explicit expression for the 
time stepping, since we have an explicit scheme. By substituting the expression 
for $M$ in the equations, the first component
becomes:
\begin{align*}
&\frac{\bQ U^{(n+1)}_{1;k,j} - \bQ U^{(n-1)}_{1;k,j}}{2 \Delta t} = \\
&\quad -\Big[ \bQ U^{(n)}_{1;k,j} \cdot
(\delta^{<1>}_{k}
U_{1;k,j}^{(n)}) + \bQ U^{(n)}_{2;k,j} \cdot
(\delta^{<1>}_{k} U_{2;k,j}^{(n)}) \\
&\quad + \delta^{<1>}_{k} (  \bQ U^{(n)}_{1;k,j} \cdot U_{1;k,j}^{(n)}) + 
\delta^{<1>}_j ( \bQ U^{(n)}_{1;k,j} \cdot 
U_{2;k,j}^{(n)})
\Big].
\end{align*}
Similarly, for the second component, the scheme reads
\begin{align*}
&\frac{\bQ  U^{(n+1)}_{2;k,j} -  \bQ U^{(n-1)}_{2;k,j}}{2 \Delta t} =\\
&\quad -\Big[  \bQ U^{(n)}_{1;k,j} \cdot
(\delta^{<1>}_j
U_{1;k,j}^{(n)}) + \bQ U^{(n)}_{2;k,j} \cdot
(\delta^{<1>}_j U_{2;k,j}^{(n)})\\
&\quad + \delta^{<1>}_k ( \bQ U^{(n)}_{2;k,j} \cdot U_{1;k,j}^{(n)})  + 
\delta^{<1>}_j ( \bQ U^{(n)}_{2;k,j} \cdot 
U_{2;k,j}^{(n)})
\Big].
\end{align*}
We can thus write explicitly the scheme, as 
\begin{align*} 
&\bQ U^{(n+1)}_{1;k,j} =  \bQ U^{(n-1)}_{1;k,j} -2\Delta t \Big[  \bQ 
U^{(n)}_{1;k,j} \cdot
(\delta^{<1>}_k U^{(n)}_{1;k,j})  + \bQ U^{(n)}_{2;k,j} 
\cdot
(\delta^{<1>}_k U^{(n)}_{2;k,j}) \\
&\qquad \qquad + \delta^{<1>}_k  \bQ U^{(n)}_{1;k,j} \cdot U^{(n)}_{1;k,j}) + 
\delta^{<1>}_j (  \bQ U^{(n)}_{1;k,j} \cdot
U^{(n)}_{2;k,j}) 
\Big],
\end{align*}
and
\begin{align*}
&\bQ  U^{(n+1)}_{2;k,j} =  \bQ U^{(n-1)}_{2;k,j}  -2\Delta t \Big[  \bQ 
U^{(n)}_{1;k,j} \cdot
(\delta^{<1>}_j
U_{1;k,j}^{(n)})  + \bQ U^{(n)}_{2;k,j} \cdot
(\delta^{<1>}_j U_{2;k,j}^{(n)}) \\
&\qquad \qquad + \delta^{<1>}_k ( \bQ U^{(n)}_{2;k,j} \cdot U_{1;k,j}^{(n)}) + 
\delta^{<1>}_j ( \bQ U^{(n)}_{2;k,j} \cdot 
U_{2;k,j}^{(n)})
\Big].
\end{align*}
Notice that at each step a
solution of a system is required. Indeed, even by solving the system having $M$
as unknown rather than $U$, at the step $(n+1)$ both $U^{(n)}$ and $M^{(n)}$ are
required in order to construct the right-hand side of the scheme. We will refer
to this scheme as to Scheme 2.

\subsection{Conservation Properties}

A result about solvability follows immediately from the last two explicit 
expressions derived for Scheme 2, which are always well defined since the 
discrete operator $\bQ$ is invertible.
\begin{theorem}
Scheme 2 has a unique numerical solution for each $n\geq2$. The results of 
existence and uniqueness does not depend on $\Delta x$, $\Delta y$, $\Delta t$.
\end{theorem}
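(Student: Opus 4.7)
The plan is to read off existence and uniqueness directly from the explicit time-stepping formulas derived just above the theorem. The scheme expresses $\bQ U^{(n+1)}_{i;k,j}$ (for $i=1,2$) as an explicit linear combination of quantities that depend only on $U^{(n-1)}$, $U^{(n)}$, and $M^{(n)}=\bQ U^{(n)}$; in particular, the right-hand side involves no unknowns at the step $n+1$. So the entire solvability question reduces to inverting $\bQ$ once per component.

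First I would note that for $n\geq 2$ the vectors $U^{(n-1)}$ and $U^{(n)}$ (and hence $M^{(n-1)}=\bQ U^{(n-1)}$ and $M^{(n)}=\bQ U^{(n)}$) are assumed given. Then I would evaluate the right-hand sides of the two explicit formulas for $\bQ U^{(n+1)}_{1;k,j}$ and $\bQ U^{(n+1)}_{2;k,j}$: this gives two concrete vectors $F_1, F_2 \in \bR^{\cK\times\cJ}$, and the problem is simply to find $U^{(n+1)}_i$ with $\bQ U^{(n+1)}_i = F_i$.

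The key observation is that the operator $\bQ = 1-\alpha^2 D^{<2>}$ is invertible: by Lemma~\ref{lemma:norms_discrete_operators} its inverse exists with $\|(1-\alpha^2 D^{<2>})^{-1}\|\leq 1$, or equivalently (as seen in the proof of that lemma) all eigenvalues of $-D^{<2>}$ are non-negative, so every eigenvalue of $\bQ$ is at least $1$. Hence $U^{(n+1)}_i = \bQ^{-1} F_i$ is the unique solution, and I would write the two resulting formulas explicitly for $i=1,2$.

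Finally I would remark that nowhere in this argument did we impose any condition on $\Delta x$, $\Delta y$, or $\Delta t$: the invertibility of $\bQ$ is an unconditional spectral fact (for any $\alpha>0$) and the right-hand side is well-defined for any time step, so existence and uniqueness hold for all $n\geq 2$ with no CFL-type restriction. There is no genuine obstacle here—the work was done in deriving the explicit form of Scheme 2 and in the earlier norm lemma; the present theorem is essentially an observation that packages those facts.
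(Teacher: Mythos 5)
Your proposal is correct and follows essentially the same route as the paper, which justifies the theorem in one sentence by observing that the explicit update formulas for $\bQ U^{(n+1)}_{i;k,j}$ have right-hand sides built only from data at steps $n$ and $n-1$, and that $\bQ$ is invertible (as recorded in Lemma~\ref{lemma:norms_discrete_operators}). Your additional spectral remark that every eigenvalue of $\bQ$ is at least $1$, and your explicit note that no restriction on $\Delta x$, $\Delta y$, $\Delta t$ enters anywhere, merely spell out what the paper leaves implicit.
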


\begin{theorem}\label{thm:conserved_quantities_scheme2}
Under the discrete periodic boundary conditions, the numerical solution 
produced by Scheme 2 conserves the following invariants, for each $n 
=1,2,\ldots$:
\begin{align*}
 \sum_{j=0}^{\cJ-1}\sum_{k=0}^{\cK-1} U_{\cdot,k,j}^{(n)} \Delta x 
\Delta y =  \sum_{j=0}^{\cJ-1}\sum_{k=0}^{\cK-1} U_{\cdot,k,j}^{(0)} \Delta x 
\Delta y,\\
 \sum_{j=0}^{\cJ-1}\sum_{k=0}^{\cK-1} H_{k,j}^{(n+\frac12)} \Delta x 
\Delta y =  \sum_{j=0}^{\cJ-1}\sum_{k=0}^{\cK-1} H_{k,j}^{(\frac12)} \Delta x 
\Delta y.
\end{align*}
\end{theorem}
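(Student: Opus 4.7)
The proof will proceed in close analogy with Theorem~\ref{thm:conserved_quantities_scheme1}, with the two differences being that (i) the discrete variational derivative is now $U^{(n)}$ (rather than $U^{(n+\frac12)}$), and (ii) the time difference operator on $\uM$ is the centred one $(\uM^{(n+1)}-\uM^{(n-1)})/(2\Delta t)$, so telescoping takes place on the shifted grid $n\pm\tfrac12$.

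For the energy identity, the plan is to start from Lemma~\ref{lemma:first_equality_scheme2}, which already rewrites the difference $\sum H^{(n+\frac12)} - \sum H^{(n-\frac12)}$ as the $L^2$-type pairing of $U_{i;k,j}^{(n)}$ with $(M_{i;k,j}^{(n+1)}-M_{i;k,j}^{(n-1)})/2$. Recognising this pairing as $\inner{\delta H/\delta(\bM^{(n+1)},\bM^{(n)},\bM^{(n-1)})}{(\uM^{(n+1)}-\uM^{(n-1)})/2}$ via~\eqref{eq:DVD_expression2}, I substitute the scheme~\eqref{eq:scheme_2_compact} to obtain
\begin{equation*}
\sum_{j,k}\bigl(H_{k,j}^{(n+\frac12)}-H_{k,j}^{(n-\frac12)}\bigr)\Delta x\Delta y
= -\Delta t\,\Bigl\langle \tfrac{\delta H}{\delta(\bM^{(n+1)},\bM^{(n)},\bM^{(n-1)})},\,\widetilde\Gamma^{(n)}_{\bm}\tfrac{\delta H}{\delta(\bM^{(n+1)},\bM^{(n)},\bM^{(n-1)})}\Bigr\rangle.
\end{equation*}
The right-hand side is zero by the skew-symmetry of $\widetilde\Gamma^{(n)}_{\bm}$ (established exactly as in Scheme~1, since skew-symmetry depends only on the spatial part of the operator and not on which time level the coefficients $\bM$ are evaluated at). Telescoping the identity from $1$ to $n$ then gives the second conservation law.

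For the linear momenta, as in the proof of Theorem~\ref{thm:conserved_quantities_scheme1}, I first use Corollary~\ref{corollary:identities_diff_op} applied to $U=\bQ^{-1}M$ (equivalently $M=\bQ U = U -\alpha^2 D^{<2>}U$) to reduce the conservation of $\sum U_{i;k,j}^{(n)}$ to that of $\sum M_{i;k,j}^{(n)}$. Summing the first component of the explicit form of Scheme~2 over $k,j$, the two outer-divergence terms $\delta_k^{<1>}(M_1^{(n)}\!\cdot\! U_1^{(n)})$ and $\delta_j^{<1>}(M_1^{(n)}\!\cdot\! U_2^{(n)})$ vanish immediately by Corollary~\ref{corollary:identities_diff_op}. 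The remaining terms $M_1^{(n)}\!\cdot\!\delta_k^{<1>} U_1^{(n)} + M_2^{(n)}\!\cdot\!\delta_k^{<1>} U_2^{(n)}$ are handled by the same trick used in Scheme~1: apply Lemma~\ref{lemma:symmetry_and_skew} to throw $\delta_k^{<1>}$ onto $M_i^{(n)}$, then substitute $M_i^{(n)} = \bQ U_i^{(n)}$ and use the skew-symmetry of the composite operators $\delta_k^{<1>}\bQ = \delta_k^{<1>}(1-\alpha^2\delta_{kk}^{<2>}-\alpha^2\delta_{jj}^{<2>})$ (which follows because each factor is skew-symmetric/symmetric and they commute). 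This yields $\sum (M_{1;k,j}^{(n+1)}-M_{1;k,j}^{(n-1)})\Delta x\Delta y = 0$, and the analogous argument gives the same for $M_2$. A two-step telescoping (separately on even and odd indices) then produces the first conservation law, with the initial data on both $n=0$ and $n=1$ supplying the base cases.

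The only genuinely new point compared to Scheme~1 is the verification that the $n$-centred operator $\widetilde\Gamma^{(n)}_{\bm}$ is still skew-symmetric; I expect this to be immediate since the structural identity relied upon in Lemma~\ref{lem:skew_symmetry} and its discrete counterpart is insensitive to the time level at which $\bm$ is sampled, and Lemma~\ref{lemma:symmetry_and_skew} does not depend on the specific values of the entries. The main mild subtlety is the telescoping over the half-integer grid and the parity split in the momentum argument, which should be mentioned explicitly but does not present a real obstacle.
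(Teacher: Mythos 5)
Your proposal is correct and follows essentially the same route as the paper's proof: energy conservation via Lemma~\ref{lemma:first_equality_scheme2}, the scheme definition~\eqref{eq:scheme_2_compact}, and skew-symmetry of $\widetilde\Gamma^{(n)}_{\bm}$; momentum conservation via Corollary~\ref{corollary:identities_diff_op}, the transfer of $\delta_k^{<1>}$ onto $M$, and skew-symmetry of $\delta_k^{<1>}\bQ$. You also correctly identify the same subtlety the paper flags, namely that the two-step recursion only yields $\sum U^{(n+1)}=\sum U^{(n-1)}$, so the conclusion additionally requires the first step to be conservative.
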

\begin{proof}
The proof mimics the one of Theorem~\ref{thm:conserved_quantities_scheme1}, and 
it is based as before on the skew-symmetry of the operator 
$\widetilde{\Gamma}^{(n)}_{\bm}$.

From before we know that
\begin{align*}
&\frac{1}{\Delta t}\sum_{j=0}^{\cJ-1}\sum_{k=0}^{\cK-1}(H_{k,j}^{(n+\frac12)} 
-H_{k,j}^{(n-\frac12)} )\Delta x 
\Delta y \\
&\frac{1}{\Delta t}\sum_{j=0}^{\cJ-1}\sum_{k=0}^{\cK-1}\Big(  
\frac{M_{1;k,j}^{(n+1)} - M_{1;k,j}^{(n-1)}}{2} U_{1;k,j}^{(n)}
+ \frac{M_{2;k,j}^{(n+1)} - M_{2;k,j}^{(n-1)}}{2} U_{2;k,j}^{(n)}
\Big)   \Delta x  
\Delta y\\
&=\sum_{j=0}^{\cJ-1}\sum_{k=0}^{\cK-1} \Big( \frac{\delta H}{\delta(\bM^{(n+1)} 
, \bM^{(n)}, \bM^{(n-1)} ) }_{k,j} \cdot \frac{\uM_{k,j}^{(n+1)} - 
\uM_{k,j}^{(n-1)}}{2\Delta t} 
\Big) \Delta x 
\Delta y.
\end{align*}
We now use the fact that our scheme is defined as in 
\eqref{eq:scheme_2_compact}, so that we can obtain the following:
\begin{align*}
&\frac{1}{\Delta t}\sum_{j=0}^{\cJ-1}\sum_{k=0}^{\cK-1}(H_{k,j}^{(n+\frac12)} 
-H_{k,j}^{(n-\frac12)} )\Delta x 
\Delta y \\
&=\sum_{j=0}^{\cJ-1}\sum_{k=0}^{\cK-1}  \frac{\delta H}{\delta(\bM^{(n+1)} 
, \bM^{(n)}, \bM^{(n-1)} ) }_{k,j} \\
&\qquad \qquad \cdot 
\Big(-\widetilde{\Gamma}^{(n)}_{\bm} \frac{\delta H}{\delta(\bM^{(n+1)} 
, \bM^{(n)}, \bM^{(n-1)} ) }_{k,j}
\Big) \Delta x 
\Delta y,
\end{align*}
which in turn is equal to zero since $\widetilde{\Gamma}^{(n+\frac12)}_{\bm}$ 
is skew-symmetric. This concludes the second part of the claim.

We show the validity of the first claim only for the first component:
\begin{align*}
&\sum_{j=0}^{\cJ-1}\sum_{k=0}^{\cK-1} 
\frac{M_{1;k,j}^{(n+1)}-M_{1;k,j}^{(n-1)}}{2 \Delta t} \Delta x \Delta y \\
&=-\sum_{j=0}^{\cJ-1}\sum_{k=0}^{\cK-1} \Big( M_{1;k,j}^{(n)} \cdot
(\delta^{<1>}_k
U_{1;k,j}^{(n)}) + M_{2;k,j}^{(n)} \cdot (\delta^{<1>}_k
U_{2;k,j}^{(n)}) \\ 
&\quad + \delta^{<1>}_k (M_{1;k,j}^{(n)} \cdot U_{1;k,j}^{(n)}) + 
\delta^{<1>}_j (M_{1;k,j}^{(n)} \cdot U_{2;k,j}^{(n)})\Big) \Delta x \Delta y.
\end{align*}
As before, the last two terms in the sum disappears by means of the 
skew-symmetry of the operators $\delta^{<1>}_j $ and $\delta^{<1>}_k$.
We remain therefore with:
\begin{align*}
&-\sum_{j=0}^{\cJ-1}\sum_{k=0}^{\cK-1} \Big( M_{1;k,j}^{(n)} \cdot
(\delta^{<1>}_k
U_{1;k,j}^{(n)}) + M_{2;k,j}^{(n)} \cdot (\delta^{<1>}_k
U_{2;k,j}^{(n)}) \Big) \Delta x \Delta y,
\end{align*}
which, as in Theorem~\ref{thm:conserved_quantities_scheme1}, reduces to
\begin{align*}
&-\sum_{j=0}^{\cJ-1}\sum_{k=0}^{\cK-1} \Big( U_{1;k,j}^{(n)} \cdot
(\delta^{<1>}_k
M_{1;k,j}^{(n)}) + U_{2;k,j}^{(n)} \cdot (\delta^{<1>}_k
M_{2;k,j}^{(n)}) \Big) \Delta x \Delta y.
\end{align*}
By using the expression for $M_{2;k,j}^{(n+\frac12)}$ we obtain:
\begin{align*}
&-\sum_{j=0}^{\cJ-1}\sum_{k=0}^{\cK-1} \Big( U_{1;k,j}^{(n)} \cdot
(\delta^{<1>}_k (1 - \alpha^2\delta^{<2>}_{kk} - \alpha^2\delta^{<2>}_{jj} ) 
U_{1;k,j}^{(n)}) \Big) 
\Delta x \Delta y\\
&\quad -\sum_{j=0}^{\cJ-1}\sum_{k=0}^{\cK-1} \Big( U_{2;k,j}^{(n)} \cdot 
(\delta^{<1>}_k (1 - \alpha^2\delta^{<2>}_{kk} - \alpha^2\delta^{<2>}_{jj} ) 
U_{2;k,j}^{(n)})
\Big) \Delta x \Delta y.
\end{align*}
The skew-symmetry of the operators involved yields allows us to conclude that 
the above quantity is equal to zero. Having 
$\sum_{j=0}^{\cJ-1}\sum_{k=0}^{\cK-1} 
\frac{M_{1;k,j}^{(n+1)}-M_{1;k,j}^{(n-1)}}{2 \Delta t} \Delta x \Delta y = 0$ 
implies that $ \sum_{j=0}^{\cJ-1}\sum_{k=0}^{\cK-1} 
U_{1;k,j}^{(n+1)} \Delta x \Delta y = \sum_{j=0}^{\cJ-1}\sum_{k=0}^{\cK-1} 
U_{1;k,j}^{(n-1)} \Delta x \Delta y.$ If the first step of the scheme is 
conservative as well, that is, if  $ \sum_{j=0}^{\cJ-1}\sum_{k=0}^{\cK-1} 
U_{1;k,j}^{(1)} \Delta x \Delta y = \sum_{j=0}^{\cJ-1}\sum_{k=0}^{\cK-1} 
U_{1;k,j}^{(0)} \Delta x \Delta y$, then the claim follows. 
\end{proof}

\section{Third Scheme: Linearly Implicit, Energy Conserving}\label{section:third_scheme}
The third scheme is based on the same idea used to derive the second scheme, 
that is to say on discretizing the energy by averaging. The discrete energy 
used in this case gives also rise to a linear multi-step scheme, but 
in this case the scheme is implicit.

As it happens for Scheme 2, the discretization of the differential operator is 
now centred around $n$ rather than around $n+\frac12$.

\subsection{Derivation of the Scheme}
We define a discrete energy function given by:
\begin{align}\label{eq:disc_energy_scheme3}
H_{k,j}^{(n+\frac12 )} = \frac{ M_{1;k,j}^{(n+1)} U_{1;k,j}^{(n+1)} + 
M_{1;k,j}^{(n)} 
U_{1;k,j}^{(n)} + M_{2;k,j}^{(n+1)} 
U_{2;k,j}^{(n+1)} + M_{2;k,j}^{(n)} 
U_{2;k,j}^{(n)}}{4}.
\end{align}
The following Lemma holds (see \ref{sec:om_proofs_first_equality_scheme3}) :
\begin{lemma}\label{lemma:first_equality_scheme3}:
For the discrete energy defined in \eqref{eq:disc_energy_scheme3} the following 
identity holds true for any $n\geq0$:
\begin{align*}
&\sum_{j=0}^{\cJ-1}\sum_{k=0}^{\cK-1}H_{k,j}^{(n +\frac12 )}\Delta x 
\Delta y - \sum_{j=0}^{\cJ-1}\sum_{k=0}^{\cK-1}H_{k,j}^{(n- \frac12 )}\Delta x 
\Delta y \\
&=\sum_{j=0}^{\cJ-1}\sum_{k=0}^{\cK-1}\Big(  
\frac{M_{1;k,j}^{(n+1)} - M_{1;k,j}^{(n-1)}}{2} 
\frac{ U_{1;k,j}^{(n+1)} + U_{1;k,j}^{(n-1)} }{2}\\
&\qquad \qquad+ \frac{M_{2;k,j}^{(n+1)} - M_{2;k,j}^{(n-1)}}{2}
\frac{ U_{2;k,j}^{(n+1)} + U_{2;k,j}^{(n-1)} }{2}
\Big)   \Delta x  
\Delta y.
\end{align*}
\end{lemma}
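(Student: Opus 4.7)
The plan is to compute both sides of the claimed identity and reduce their difference to an expression that vanishes after summation by virtue of the self-adjointness of $\bQ = 1-\alpha^2(\delta^{<2>}_{kk}+\delta^{<2>}_{jj})$.

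First, I would write out $H^{(n+\frac12)}_{k,j}-H^{(n-\frac12)}_{k,j}$ directly from the definition \eqref{eq:disc_energy_scheme3}. The terms $M^{(n)}_{i;k,j}U^{(n)}_{i;k,j}$ appearing in both $H^{(n+\frac12)}$ and $H^{(n-\frac12)}$ (for $i=1,2$) cancel exactly, leaving, after multiplying by $4$,
\begin{equation*}
4\bigl(H^{(n+\frac12)}_{k,j}-H^{(n-\frac12)}_{k,j}\bigr) = \sum_{i=1}^{2}\bigl(M^{(n+1)}_{i;k,j}U^{(n+1)}_{i;k,j}-M^{(n-1)}_{i;k,j}U^{(n-1)}_{i;k,j}\bigr).
\end{equation*}

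Next, I would expand the proposed right-hand side pointwise. For each component $i$,
\begin{equation*}
\frac{M^{(n+1)}_{i;k,j}-M^{(n-1)}_{i;k,j}}{2}\cdot\frac{U^{(n+1)}_{i;k,j}+U^{(n-1)}_{i;k,j}}{2}
=\frac14\bigl(M^{(n+1)}_i U^{(n+1)}_i - M^{(n-1)}_i U^{(n-1)}_i + M^{(n+1)}_i U^{(n-1)}_i - M^{(n-1)}_i U^{(n+1)}_i\bigr)_{k,j}.
\end{equation*}
Comparing with the first display, the proposed right-hand side equals the left-hand side \emph{plus} the cross terms $\tfrac14\bigl(M^{(n+1)}_i U^{(n-1)}_i - M^{(n-1)}_i U^{(n+1)}_i\bigr)_{k,j}$. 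Pointwise this discrepancy does not vanish, so the identity cannot be true without summing; this is the only real subtlety.

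The remaining step is to show that after summing over $k,j$ and $i=1,2$, the cross terms cancel. Since $M^{(n\pm1)}_i = \bQ U^{(n\pm1)}_i$ and $\bQ = 1-\alpha^2(\delta^{<2>}_{kk}+\delta^{<2>}_{jj})$, the symmetry statement of Lemma~\ref{lemma:symmetry_and_skew} for $\delta^{<2>}_{kk}+\delta^{<2>}_{jj}$ yields that $\bQ$ is self-adjoint in the discrete inner product. Therefore
\begin{equation*}
\sum_{j,k} M^{(n+1)}_{i;k,j}U^{(n-1)}_{i;k,j}\,\Delta x\Delta y
= \sum_{j,k} (\bQ U^{(n+1)}_i)_{k,j} U^{(n-1)}_{i;k,j}\,\Delta x\Delta y
= \sum_{j,k} U^{(n+1)}_{i;k,j}(\bQ U^{(n-1)}_i)_{k,j}\,\Delta x\Delta y,
\end{equation*}
which is exactly $\sum_{j,k} U^{(n+1)}_{i;k,j}M^{(n-1)}_{i;k,j}\Delta x\Delta y$, so the cross-term summand vanishes for each $i$. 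Adding this to the pointwise computation above and dividing by $4$ gives the claimed identity. The only step that is more than bookkeeping is this self-adjointness argument, and it is the natural discrete analogue of the integration by parts used in the continuous conservation proof.
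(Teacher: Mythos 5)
Your proof is correct and follows essentially the same route as the paper's: both arguments reduce to the cancellation of the $M^{(n)}_{i}U^{(n)}_{i}$ terms and then invoke the self-adjointness of $\bQ=1-\alpha^2(\delta^{<2>}_{kk}+\delta^{<2>}_{jj})$ to identify $\sum_{k,j}M^{(n+1)}_{i;k,j}U^{(n-1)}_{i;k,j}$ with $\sum_{k,j}U^{(n+1)}_{i;k,j}M^{(n-1)}_{i;k,j}$. The only difference is organizational --- the paper adds and subtracts the cross terms to factorize toward the target, whereas you expand the target and show the leftover cross terms vanish under summation --- which is the same computation read in the opposite direction.
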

We therefore define the following discrete variational derivative:
\begin{align}\label{eq:DVD_expression3}
\frac{\delta H}{\delta(\bM^{(n+1)} , \bM^{(n)}, \bM^{(n-1)})}_{k,j} := 
 \begin{bmatrix}
      \frac{U_{1;k,j}^{(n+1)} + U_{1;k,j}^{(n-1)}  }{2}\\
      \frac{U_{2;k,j}^{(n+1)} + U_{2;k,j}^{(n-1)}  }{2}
     \end{bmatrix}.
\end{align}

The scheme is defined component-wise as:
\begin{align*}
&\frac{M^{(n+1)}_{1;k,j} - M^{(n-1)}_{1;k,j}}{2 \Delta t} = \\
&\quad -\Big[ M_{1;k,j}^{(n)} \cdot
(\delta^{<1>}_k \frac{U^{(n+1)}_{1;k,j} + U^{(n-1)}_{1;k,j}}{2}) + 
M_{2;k,j}^{(n)} \cdot (\delta^{<1>}_k \frac{U^{(n+1)}_{2;k,j} + 
U^{(n-1)}_{2;k,j}}{2}) \\ 
&\quad + \delta^{<1>}_{1;k,j} (M_{1;k,j}^{(n)} \cdot
\frac{U^{(n+1)}_k + U^{(n-1)}_{1;k,j}}{2}) 
+\delta^{<1>}_j (M_{1;k,j}^{(n)} \cdot 
\frac{U^{(n+1)}_{2;k,j} + U^{(n-1)}_{2;k,j}}{2}) \Big],
\end{align*}
and 
\begin{align*}
&\frac{M^{(n+1)}_{2;k,j} - M^{(n-1)}_{2;k,j}}{2 \Delta t} = \\
&\quad -\Big[M_{1;k,j}^{(n)} \cdot (\delta^{<1>}_j \frac{U^{(n+1)}_{1;k,j} 
+ U^{(n-1)}_{1;k,j}}{2}) + M_{2;k,j}^{(n)} \cdot (\delta^{<1>}_j
\frac{U^{(n+1)}_{2;k,j} + U^{(n-1)}_{2;k,j}}{2}) \\ 
&\quad + \delta^{<1>}_k (M_{2;k,j}^{(n)} \cdot
\frac{U^{(n+1)}_{1;k,j} + U^{(n-1)}_{1;k,j}}{2}) +\delta^{<1>}_j 
(M_{2;k,j}^{(n)} \cdot \frac{U^{(n+1)}_{2;k,j} + U^{(n-1)}_{2;k,j}}{2}) \Big].
\end{align*}

We focus for a moment only on the
first component. By substituting the expression for $M$ in the equation and by 
further simplifying, we get: 
\begin{align*} 
& \bQ U^{(n+1)}_{1;k,j} - \bQ U^{(n-1)}_{1;k,j} = \\
&\ -\Delta t \Big[ \bQ U^{(n)}_{1;k,j} \cdot
\Big(\delta^{<1>}_k (U^{(n+1)}_{1;k,j} + U^{(n-1)}_{1;k,j}) \Big) + \bQ 
U^{(n)}_{2;k,j} \cdot
\Big(\delta^{<1>}_k (U^{(n+1)}_{2;k,j} + U^{(n-1)}_{2;k,j}) \Big) \\
&\quad +  \delta^{<1>}_k \Big(\bQ U^{(n)}_{1;k,j} \cdot (U^{(n+1)}_{1;k,j} 
+ U^{(n-1)}_{1;k,j})\Big) + \delta^{<1>}_j \Big( \bQ U^{(n)}_{1;k,j} \cdot
(U^{(n+1)}_{2;k,j} + U^{(n-1)}_{2;k,j})\Big) 
\Big].
\end{align*}
By moving at the left-hand side the $n+1$-indexed terms, the following
expression for the left-hand side at the first component, $LHS_1$, is achieved
\begin{align*} 
LHS_1&:= \bQ U^{(n+1)}_{1;k,j}  + \Delta t \Big[ \bQ U^{(n)}_{1;k,j} \cdot
(\delta^{<1>}_k U^{(n+1)}_{1;k,j} ) + \bQ U^{(n)}_{2;k,j}
\cdot(\delta^{<1>}_k U^{(n+1)}_{2;k,j} ) \\ 
&+ \delta^{<1>}_k ( \bQ U^{(n)}_{1;k,j} \cdot U^{(n+1)}_{1;k,j} ) + 
\delta^{<1>}_j (
\bQ U^{(n)}_{1;k,j} \cdot U^{(n+1)}_{2;k,j}) \Big].
\end{align*}
Similarly, the right-hand side becomes:
\begin{align*} 
RHS_1&:= \bQ U^{(n-1)}_{1;k,j} - \Delta t \Big[ \bQ U^{(n)}_{1;k,j} \cdot
(\delta^{<1>}_k U^{(n-1)}_{1;k,j} )+ \bQ U^{(n)}_{2;k,j} \cdot
(\delta^{<1>}_k U^{(n-1)}_{2;k,j})  \\ 
&+\delta^{<1>}_k ( \bQ U^{(n)}_{1;k,j} \cdot U^{(n-1)}_{1;k,j})
 + \delta^{<1>}_j ( \bQ U^{(n)}_{1;k,j} \cdot
U^{(n-1)}_{2;k,j}) \Big],
\end{align*}
We can use the same kind of techniques for the second component:
\begin{align*}
&\bQ U^{(n+1)}_{2;k,j} - \bQ U^{(n-1)}_{2;k,j} = \\
&\ -\Delta t\ \Big[\bQ 
U_{1;k,j}^{(n)} \cdot
\Big(\delta^{<1>}_j
(U^{(n+1)}_{1;k,j} - U^{(n-1)}_{1;k,j}) \Big) + \bQ U_{2;k,j}^{(n)} \cdot 
\Big(\delta^{<1>}_j
(U^{(n+1)}_{2;k,j} - U^{(n-1)}_{2;k,j}) \Big) \\ 
&\quad + \delta^{<1>}_k \Big(\bQ U_{2;k,j}^{(n)} \cdot
(U^{(n+1)}_{1;k,j} - U^{(n-1)}_{1;k,j}) \Big) +\delta^{<1>}_k \Big(\bQ 
U_{2;k,j}^{(n)} \cdot (U^{(n+1)}_{2;k,j} - U^{(n-1)}_{2;k,j}) \Big)
\Big].
\end{align*}
We thus obtain:
\begin{align*} 
LHS_2&:= \bQ U^{(n+1)}_{2;k,j}  + \Delta t \Big[ \bQ U^{(n)}_{1;k,j} \cdot
(\delta^{<1>}_j U^{(n+1)}_{1;k,j} ) + \bQ U^{(n)}_{2;k,j}
\cdot(\delta^{<1>}_j U^{(n+1)}_{2;k,j} ) \\ 
&+ \delta^{<1>}_k ( \bQ U^{(n)}_{2;k,j} \cdot U^{(n+1)}_{1;k,j} ) + 
\delta^{<1>}_j (
\bQ U^{(n)}_{2;k,j}
\cdot
U^{(n+1)}_{2;k,j}) 
\Big].
\end{align*}
and 
\begin{align*} 
RHS_2&:= \bQ U^{(n-1)}_{2;k,j} - \Delta t \Big[ \bQ U^{(n)}_{1;k,j} \cdot
(\delta^{<1>}_j U^{(n-1)}_{1;k,j} )+ \bQ U^{(n)}_{2;k,j} \cdot
(\delta^{<1>}_j U^{(n-1)}_{2;k,j})  \\ 
&\quad +\delta^{<1>}_k ( \bQ U^{(n)}_{2;k,j} \cdot U^{(n-1)}_{1;k,j})
 + \delta^{<1>}_j ( \bQ U^{(n)}_{2;k,j} \cdot
U^{(n-1)}_{2;k,j}) \Big].
\end{align*}

\subsection{Conservation properties}
This third scheme, although formally similar to the second one, present the 
disadvantage of not preserving the linear momenta. In the next theorem we prove 
indeed 
how the conservation of the energy occurs and, sketch why 
conservation of linear momenta fails.

\begin{theorem}\label{thm:conserved_quantities_scheme3}
Under the discrete periodic boundary conditions, the numerical solution 
produced by Scheme 3 conserves the following invariant, for each $n 
=1,2,\ldots$:
\begin{align*}
 \sum_{j=0}^{\cJ-1}\sum_{k=0}^{\cK-1} H_{k,j}^{(n+\frac12)} \Delta x 
\Delta y =  \sum_{j=0}^{\cJ-1}\sum_{k=0}^{\cK-1} H_{k,j}^{(\frac12)} \Delta x 
\Delta y.
\end{align*}
\end{theorem}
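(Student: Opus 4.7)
The plan is to mirror the proof of Theorem~\ref{thm:conserved_quantities_scheme2} for Scheme~2, since Scheme~3 differs only in how the discrete energy is averaged in time and in the correspondingly redefined discrete variational derivative \eqref{eq:DVD_expression3}. The skeleton has three beats: an exact discrete chain rule supplied by the construction of the DVD, substitution of the scheme into that identity, and a final appeal to skew-symmetry of the discrete Poisson operator $\widetilde{\Gamma}^{(n)}_{\bm}$.

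First I would invoke Lemma~\ref{lemma:first_equality_scheme3}, which rewrites the one-step difference
\begin{equation*}
\sum_{j=0}^{\cJ-1}\sum_{k=0}^{\cK-1}\bigl(H_{k,j}^{(n+\frac12)} - H_{k,j}^{(n-\frac12)}\bigr)\Delta x \Delta y
\end{equation*}
as the discrete $L^2$ pairing of the centred time difference $(\uM^{(n+1)} - \uM^{(n-1)})/(2\Delta t)$ with precisely the DVD from \eqref{eq:DVD_expression3}, multiplied by $2\Delta t$. This identification is, by design, tautological: \eqref{eq:DVD_expression3} was chosen so as to make this equality hold. Substituting the compact form of Scheme~3,
\begin{equation*}
\frac{\uM_{k,j}^{(n+1)} - \uM_{k,j}^{(n-1)}}{2\Delta t} = -\widetilde{\Gamma}^{(n)}_{\bm}\, \frac{\delta H}{\delta(\bM^{(n+1)}, \bM^{(n)}, \bM^{(n-1)})}_{k,j},
\end{equation*}
into the resulting expression yields a sum of the shape $-\inner{\Phi}{\widetilde{\Gamma}^{(n)}_{\bm}\Phi}$ with $\Phi$ the DVD, which vanishes by skew-symmetry of $\widetilde{\Gamma}^{(n)}_{\bm}$. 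A telescoping in $n$ then delivers the claimed conservation $\sum H_{k,j}^{(n+\frac12)} \Delta x \Delta y = \sum H_{k,j}^{(\frac12)} \Delta x \Delta y$.

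The main, and essentially only, subtle point is verifying that the block-operator acting on $\Phi$ in the component-wise formulation of Scheme~3 is the \emph{same} skew-symmetric $\widetilde{\Gamma}^{(n)}_{\bm}$ already used for Scheme~2. A direct inspection of the component-wise equations confirms this: the outer $M^{(n)}$ factors and the discrete derivative operators $\delta^{<1>}_{k}$ and $\delta^{<1>}_{j}$ are literally identical; only the argument has been replaced by the average $(U^{(n+1)} + U^{(n-1)})/2$. Skew-symmetry therefore transfers verbatim, and the proof closes exactly as in Theorem~\ref{thm:conserved_quantities_scheme2}.
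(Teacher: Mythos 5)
Your proposal is correct and is exactly the argument the paper intends: its own proof of this theorem is the one-line remark that it ``follows the very same lines of the previous ones,'' and you have simply written out those lines (Lemma~\ref{lemma:first_equality_scheme3}, substitution of the compact form of Scheme~3, skew-symmetry of $\widetilde{\Gamma}^{(n)}_{\bm}$, telescoping). The only nit is that the one-step energy difference equals the pairing with $(\uM^{(n+1)}-\uM^{(n-1)})/(2\Delta t)$ multiplied by $\Delta t$ rather than $2\Delta t$, which is immaterial since the pairing vanishes.
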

\begin{proof}
It follows the very same lines of the previous ones
\end{proof}
\begin{remark}
We want to remark that, in general, we do not expect conservation of the 
discrete momentum
\begin{align*}
 \sum_{j=0}^{\cJ-1}\sum_{k=0}^{\cK-1} U_{\cdot,k,j}^{(n)} \Delta x 
\Delta y,
\end{align*}
as it happened for the other two schemes. The proof of conservation of this 
quantity fails since we find ourselves to evaluate, for example, the quantity
\begin{align*}
&-\sum_{j=0}^{\cJ-1}\sum_{k=0}^{\cK-1} \Big[ U_{1;k,j}^{(n)} \cdot
\delta^{<1>}_k (1 - \alpha^2\delta^{<2>}_{kk} - \alpha^2\delta^{<2>}_{jj} ) 
\Big( \frac{U_{1;k,j}^{(n+1)} + U_{1;k,j}^{(n-1)}}{2} \Big) 
\Big] 
\Delta x \Delta y\\
&\quad -\sum_{j=0}^{\cJ-1}\sum_{k=0}^{\cK-1} \Big[ U_{2;k,j}^{(n)} \cdot 
\delta^{<1>}_k (1 - \alpha^2\delta^{<2>}_{kk} - \alpha^2\delta^{<2>}_{jj} ) 
\Big( \frac{U_{2;k,j}^{(n+1)} + U_{2;k,j}^{(n-1)}}{2} \Big) 
\Big] \Delta x \Delta y,
\end{align*}
and we can no longer rely on the ``skew-symmetry'' trick.
\end{remark}

\section{Predictor-Corrector Method}\label{section:predictor-corrector}

As pointed out in Section~\ref{section:first_scheme}, the main issue in 
implementing Scheme 1 is the presence of a non linear term. 
An option worth considering in order to implement the scheme, is the use of a 
fixed-point iteration algorithm, namely of a predictor-corrector routine, based 
on the schemes already introduced. We start by using a quick 
predictor routine to approximate $U^{(n+1),p}$, which is 
then used as initial guess for the fixed-point iteration which linearises 
Scheme 1. We can thus compute $U^{(n+1),c}$ from the now linearised Scheme 
1, which is then used as new initial guess. We thus produce a series of 
values for $U^{(n+1),c}$ tending to $U^{(n+1)}$. 

Although different choices for the predictor are possible, we found 
it convenient to use Scheme 2, which has the lowest computational cost per iteration. 
The predictor-corrector method we use is listed in 
Algorithm~\ref{alg:predictor_corrector}.

\begin{algorithm}
	\DontPrintSemicolon
	\SetKwData{MaxIt}{max_iter}
	\KwData{Initial condition $U^{(0)}\in \bR^{2\times \cK \times \cJ}$}
	\KwResult{Discrete solution $U^{(1)},\ldots,U^{(N)}\in \bR^{2\times \cK \times \cJ}$}
	\BlankLine
	Produce $M^{(1)}$ and $U^{(1)}$ by a one-step method (e.g.\ Runge-Kutta)\;
	\For{$n\leftarrow 2$ \KwTo $N$}{
		\tcp*[h]{Predictor}
		{\tiny
		\begin{align*}
			\frac{M^{(n+1),p}_{\{1,2\};k,j} - 
			M^{(n-1)}_{\{1,2\};k,j}}{2 \Delta t} &= -\Big( 
			M_{1;k,j}^{(n)} \cdot
			(\delta^{<1>}_{\{k,j\}}
			U_{1;k,j}^{(n)}) + M_{2;k,j}^{(n)} \cdot 
			(\delta^{<1>}_{\{k,j\}} U_{2;k,j}^{(n)}) \\ 
			&\quad + \delta^{<1>}_k (M_{\{1,2\};k,j}^{(n)} \cdot 
			U_{1;k,j}^{(n)}) + 
			\delta^{<1>}_j (M_{\{1,2\};k,j}^{(n)} \cdot 
			U_{2;k,j}^{(n)}) \Big) \\
		\end{align*}
		}
		\;
		\For{$i\leftarrow 1$ \KwTo number of corrector iterations}{
			\tcp*[h]{Corrector}
			{\tiny
			\begin{align*}
		    \frac{M_{ \{1,2 \} ;k,j}^{(n+1),c} - 
		    M^{(n)}_{\{1,2 \};k,j}}{\Delta t} &= - \frac{1}{2}\Big( 
		    ( M_{1;k,j}^{(n)} + M_{1;k,j}^{(n+1),p}) 
		    \delta^{<1>}_{\{k,j \}} 
		    (U_{1;k,j}^{(n)} + U_{1;k,j}^{(n+1),p}) \\ 
		    &+ (M_{2;k,j}^{(n)} + M_{2;k,j}^{(n+1),p}) 
		   \delta^{<1>}_{\{k,j \}}
		   (U_{2;k,j}^{(n)} + U_{2;k,j}^{(n+1),p}) \\
		  &+ \delta^{<1>}_k 
		  ((M_{\{1,2 \};k,j}^{(n)} + M_{\{1,2 \};k,j}^{(n+1),p})  \cdot 
		  (U_{1;k,j}^{(n)} + U_{1;k,j}^{(n+1),p}) ) 
		  \\&+\delta^{<1>}_j 
		( (M_{\{1,2 \};k,j}^{(n)} + M_{\{1,2 \};k,j}^{(n+1),p}) \cdot 
		(U_{2;k,j}^{(n)} + U_{2;k,j}^{(n+1),p}) )
				\Big) 
			\end{align*}
			}
			\;
			\tcp{Corrector iteration update}
				$M^{(n+1),p} = M^{(n+1),c}$ \;
		}
		\tcp{Time-step update}
		$M^{(n+1)} = M^{(n+1),p}$ \;
	}
	\caption{Predictor-corrector method for Scheme~1}\label{alg:predictor_corrector}
\end{algorithm}

It is worth noticing that the number of iteration of the corrector might be 
variable, by introducing a control over the relative residual. Although this 
might be be a good choice to test the mathematical properties of Scheme 
1, for concrete purposes one would like to keep the number of corrector iterations as low as possible, so 
that the overall cost of the method is comparable with the cost of Scheme 2 
and 3, although the conservation property are not ensured anymore. In 
Section~\ref{section:numerics} we test both predictor-corrector implementation 
of 
Scheme 1 with fix and with variable number of iterations. For more 
information about this topic, we refer the reader to \cite{Daisuke}, where 
predictor-corrector schemes based on the DVDM are investigated more in detail.


\section{Numerical Results}\label{section:numerics}
We devote this section to the presentation of numerical results. We first test 
the quality of our schemes, by empirically verifying all the properties that 
we 
discussed in the previous section, and then use our scheme to solve problems 
where singular wave fronts interact with each other, in the spirit of what done 
in \cite{Holm} and \cite{Particle}.


\subsection{Conservation Properties}
The first tests presented in this section are about the empirical verification 
of 
the conservation properties. We 
choose to test our schemes with a very simple initial profile given by the 
following expression:
\begin{align*}
&u_1(t=0,\bx) = 0.5((2+\pi^2)+\sin(\pi x_1)),\\
&u_2(t=0,\bx) = 0.
\end{align*}
The reason to do so is that we can run the simulation for relatively large 
values of the final time $T$, in this particular case equal to $50$, and expect 
the second component $u_2$ to remain zero throughout the simulation.
The factor $0.5$ comes from a rescaling of the problem, while the vertical shift is 
introduced for the sake of visualization of $|U|$.

We fix the ratio between temporal and spatial discretization so that $\Delta t 
= \Delta x^2$, and we work on the domain $\Omega=[-1,1]\times [-1,1]$. We 
choose a spatial 
discretization with $20 \times 20$ grid points. The coarseness of the spatial 
grid does not play any role in the conservation of the energy and of the 
linear momenta, and we therefore do not lose any generality with this choice.

We include the results obtained by means of an explicit fourth order 
Runge-Kutta scheme as a possible term of comparison. Scheme 1 is implemented in 
the 
predictor-corrector routine described in the previous section, with a variable 
number of corrector routines until a relative tolerance of $1e-14$ is reached. 
An alternative version of this scheme, implemented with a fixed number of 
correction routines is also included.

For the multistep schemes, the first step is performed by means of Scheme 1 
solved through MATLAB's built-in function {\it fsolve}. The conservation is 
measured in terms of total variation and of the discrete 
$\norm{\infty}{\cdot}$-norm. 
We report in Table~\ref{tab:energy} the results about conservation of the 
energy and in Table~\ref{tab:momentum1} and \ref{tab:momentum2} the results 
about conservation of the linear momenta. In Figure~\ref{fig:Energies} we show 
the evolution of $|H^{(n)}-H^{(0)}|$ as a function of $n\Delta t$, where 
$H^{(n)}$ denotes for each scheme the corresponding total discrete energy at 
time-step $n$. 


\begin{figure}[h!]
        \centering
                \includegraphics[width=\textwidth]{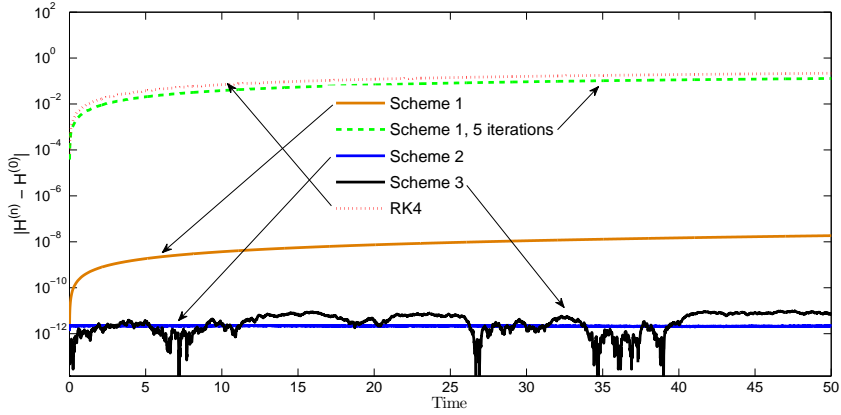}
\caption{Evolution of $| H^{(n)} - H^{(0)}|$ as a function of $n\Delta t$ }
\label{fig:Energies}
\end{figure}

\begin{table}[htbp]
\caption{Conservation of the discrete energy}
\label{tab:energy} 
\begin{center}
  \begin{tabular}{ | l || c | c | }
    \hline
 & Total Variation & $\norm{\infty}{\cdot}$ \\ 
\hline
Scheme 1 
    & $1.8529\cdot 10^{-8}$ & $1.8529\cdot 10^{-8}$ \\ 
    \hline
Scheme 1\footnotemark[1]
    & $0.1290$ & $0.1290$ \\ 
    \hline
Scheme 2
    & $2.1306\cdot 10^{-10}$ & $2.3448\cdot 10^{-12}$ \\
    \hline
Scheme 3 
    & $5.8814\cdot 10^{-10}$ & $1.3628\cdot 10^{-11}$  \\ 
    \hline
RK4 
    & $0.2185$ & $0.2185$ \\
\hline
  \end{tabular}
\end{center}
\end{table}

\begin{table}[htbp]
\caption{Conservation of the linear momentum in $x$-direction}
\label{tab:momentum1} 
\begin{center}
  \begin{tabular}{ | l || c | c | }
    \hline
 & Total Variation &$\norm{\infty}{\cdot}$\\ 
\hline
Scheme 1 
    & $3.1130\cdot 10^{-9}$ & $3.1127\cdot 10^{-9}$ \\ 
    \hline
Scheme 1\footnotemark[1]
    & $3.1118\cdot 10^{-9}$ & $3.1115\cdot 10^{-9}$ \\ 
    \hline
Scheme 2
    & $2.6427\cdot 10^{-9}$ & $1.2150\cdot 10^{-12}$ \\
    \hline
Scheme 3 
    & $5.7786$ & $ 0.0180$  \\ 
    \hline
RK4 
    & $1.7469\cdot 10^{-11}$ & $7.8160\cdot 10^{-14}$ \\
\hline
  \end{tabular}
\end{center}
\end{table}

\begin{table}[htbp]
\caption{Conservation of the linear momentum in $y$-direction}
\label{tab:momentum2} 
\begin{center}
  \begin{tabular}{ | l || c | c | }
    \hline
 & Total Variation & $\norm{\infty}{\cdot}$\\ 
\hline
Scheme 1 
    & $2.6557\cdot 10^{-16}$ & $8.0264\cdot 10^{-17}$ \\ 
    \hline
Scheme 1\footnotemark[1]
    & $3.1510\cdot 10^{-16}$ & $1.1311\cdot 10^{-16}$ \\ 
    \hline
Scheme 2
    & $1.7778\cdot 10^{-16}$ & $1.4135\cdot 10^{-17}$ \\
    \hline
Scheme 3 
    & $8.6174\cdot 10^{-10}$ & $8.7079\cdot 10^{-11}$  \\ 
    \hline
RK4 
    & $2.6717\cdot 10^{-19}$ & $2.2399\cdot 10^{-20}$ \\
\hline
  \end{tabular}
\end{center}
\end{table}

\footnotetext[1]{With a fixed number of $5$ corrector iterations.} 


\subsection{Interaction of Singular Waves Fronts} 

The initial data is modelled on the basis of the singular wave fronts described 
in \cite{Holm}. We focus in particular on the first series of
numerical experience, where the authors consider a collection of wave profiles
that have constant magnitude along a direction and have a cross section with
Gaussian profile. We consider initial profiles such
that $ \abs{U} = \ee^{-\frac{\abs{\bx}}{\sigma}} $ for various $\sigma > 0$. 
The initial profile is 
smooth but close to singular, and it has bounded support.
In order to produce such profiles, we use a suitable smooth cut-off and we 
adopt a strategy similar to the one presented in \cite{Particle}.

With this kind of configuration, it is
meaningful to consider short times for the evolution of the system, since we 
do not want the wave front to hit the boundary. For
most of our simulation a final time of at most $T=1.5$ suffices while for
some tests, smaller times such as $T=1.25$ or even $T=0.8$ might be more 
suitable.

To be consistent with the references~\cite{Particle,Holm},
we test all our initial profiles on a grid with
$1025 \times 1025$ points. 
Some other tests, such as the reversibility tests, 
are instead conducted on
the coarser grid $200 \times 200$, since the wave
profiles will be qualitatively close enough to their counterparts on finer
grids and since the outcome of our analysis will not be affected by the
discretization chosen.

It is worth to preliminary remark that the profile is 
stable for
$\alpha = \sigma$, with a stable peakon curve segment that retains its
integrity. For $\alpha < \sigma$ the profile is unstable and the peakon segment
breaks into narrower curved peakons, {\it contact curves}, each of which of 
width
$\alpha$.

All the numerical results presented in the rest of the manuscript are obtained
by using the initial profiles depicted in Figure~\ref{fig:initial_profiles}.

In Figure~\ref{fig:plate} the profile has velocity parallel to
the outward normal vector, oriented to the right. In Figure~\ref{fig:parallel}
the velocity field has the same orientation, and the leftmost wave profile has
twice the magnitude of the rightmost one. Finally, in Figure~\ref{fig:star}
each of the wave fronts has velocity parallel to its outward normal vector, and
all of them are oriented towards the same direction, i.e., clock-wise.

The qualitative behaviour of the schemes presented in this manuscript are all 
similar, and therefore we only present the results for one of them, namely for 
Scheme 2. We notice in 
Figure~\ref{fig:ev_scheme2_alpha=sigma,sigma/2} - 
\ref{fig:ev_scheme2_alpha=sigma/4_star}
 how the evolution of the solution produced with our schemes is consistent with 
what already observed in \cite{Holm}.



\begin{figure}
        \centering
        \begin{subfigure}[b]{0.49\textwidth}
                \includegraphics[width=\textwidth]{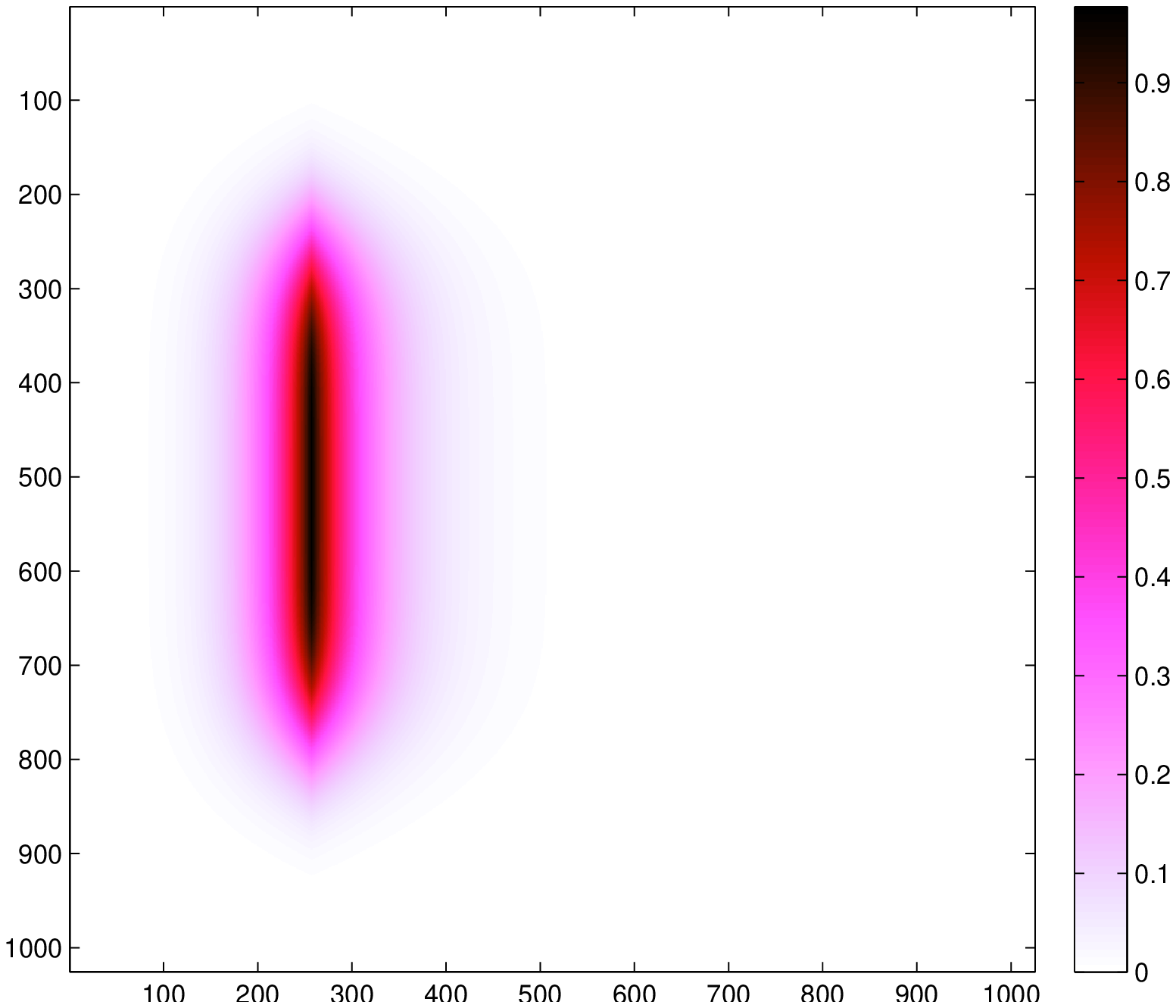}
                \caption{``Plate'' profile}
                \label{fig:plate}
        \end{subfigure}
        \begin{subfigure}[b]{0.49\textwidth}
                \includegraphics[width=\textwidth]{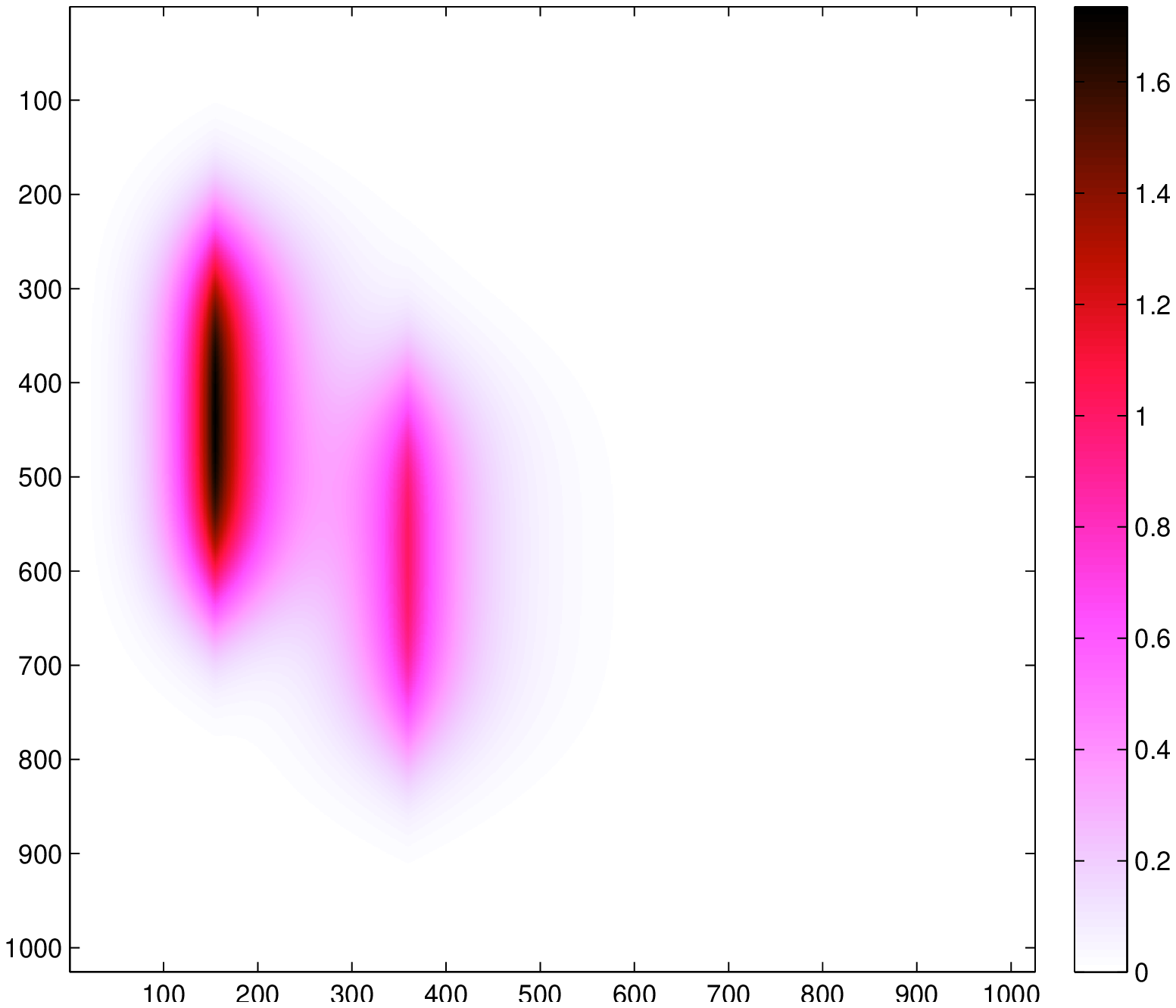}
                \caption{``Parallel'' profile}
                \label{fig:parallel}
        \end{subfigure}
        \begin{subfigure}[b]{0.49\textwidth}
                \includegraphics[width=\textwidth]{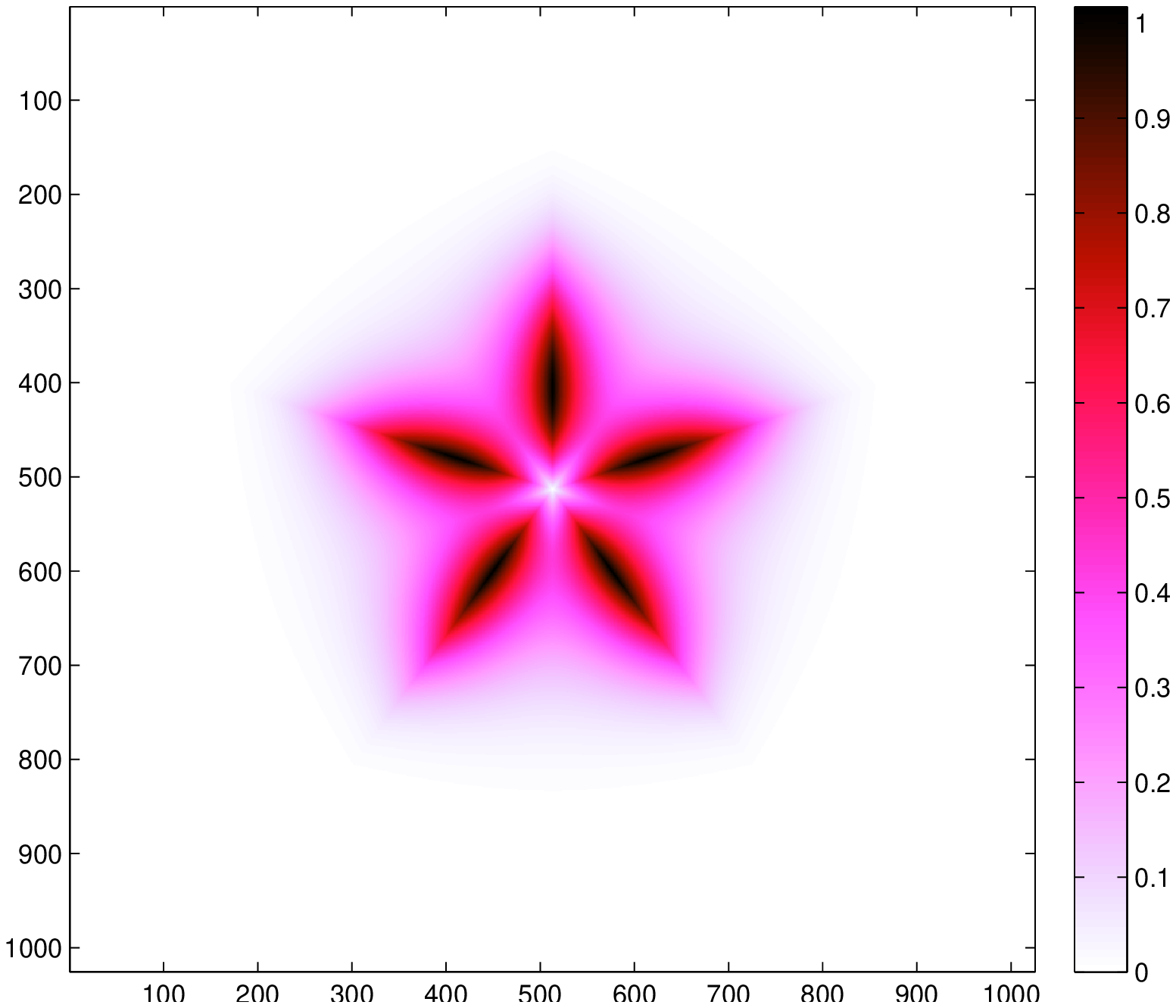}
                \caption{``Star'' profile}
                \label{fig:star}
        \end{subfigure}
\caption{Initial profiles}\label{fig:initial_profiles}
\end{figure}


\begin{figure}
        \centering
        \begin{subfigure}[b]{0.49\textwidth}
                \includegraphics[width=\textwidth]{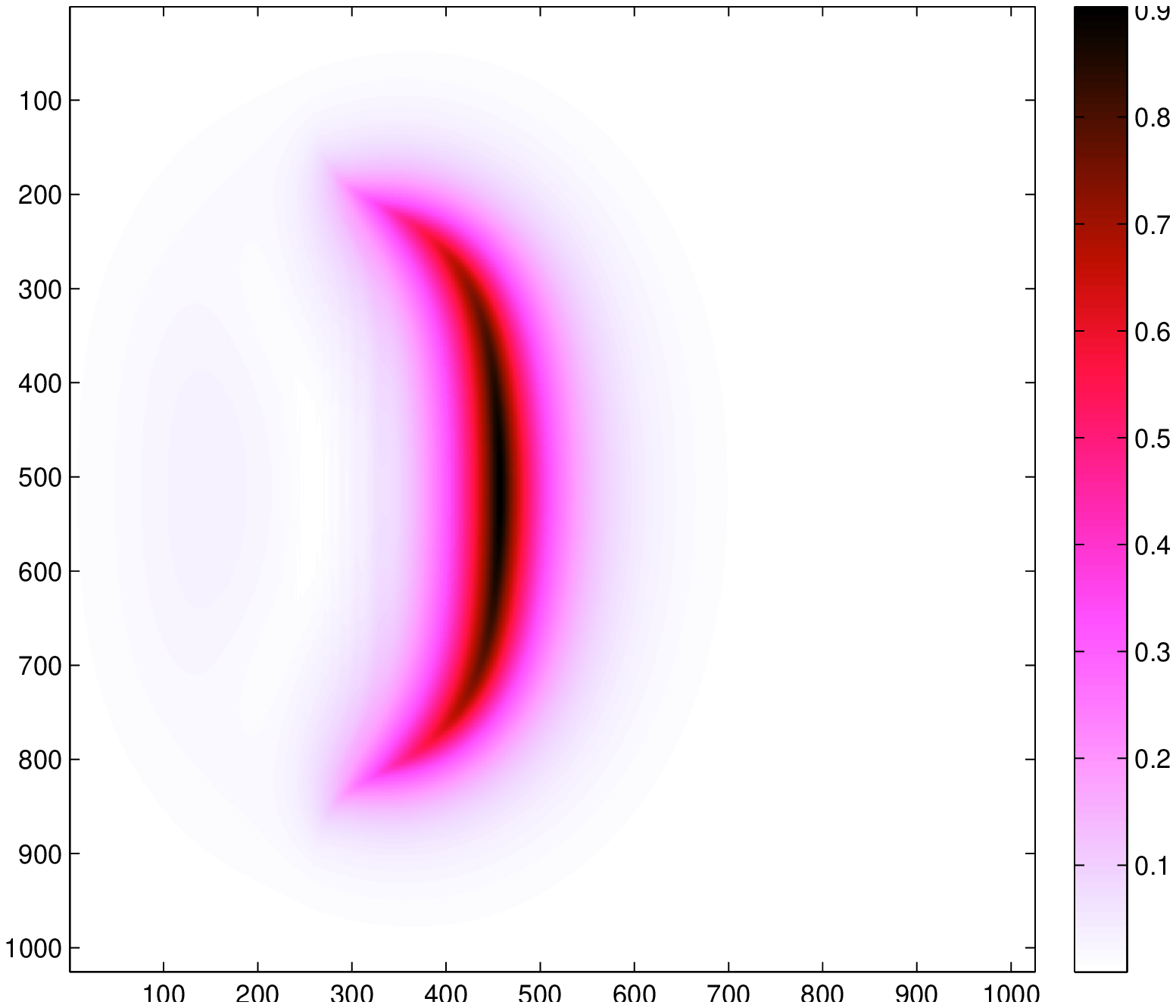}
                \caption{``Plate'', $T=0.4167$, $\alpha = \sigma$}
                \label{fig:plate13}
        \end{subfigure}
        \begin{subfigure}[b]{0.49\textwidth}
                \includegraphics[width=\textwidth]{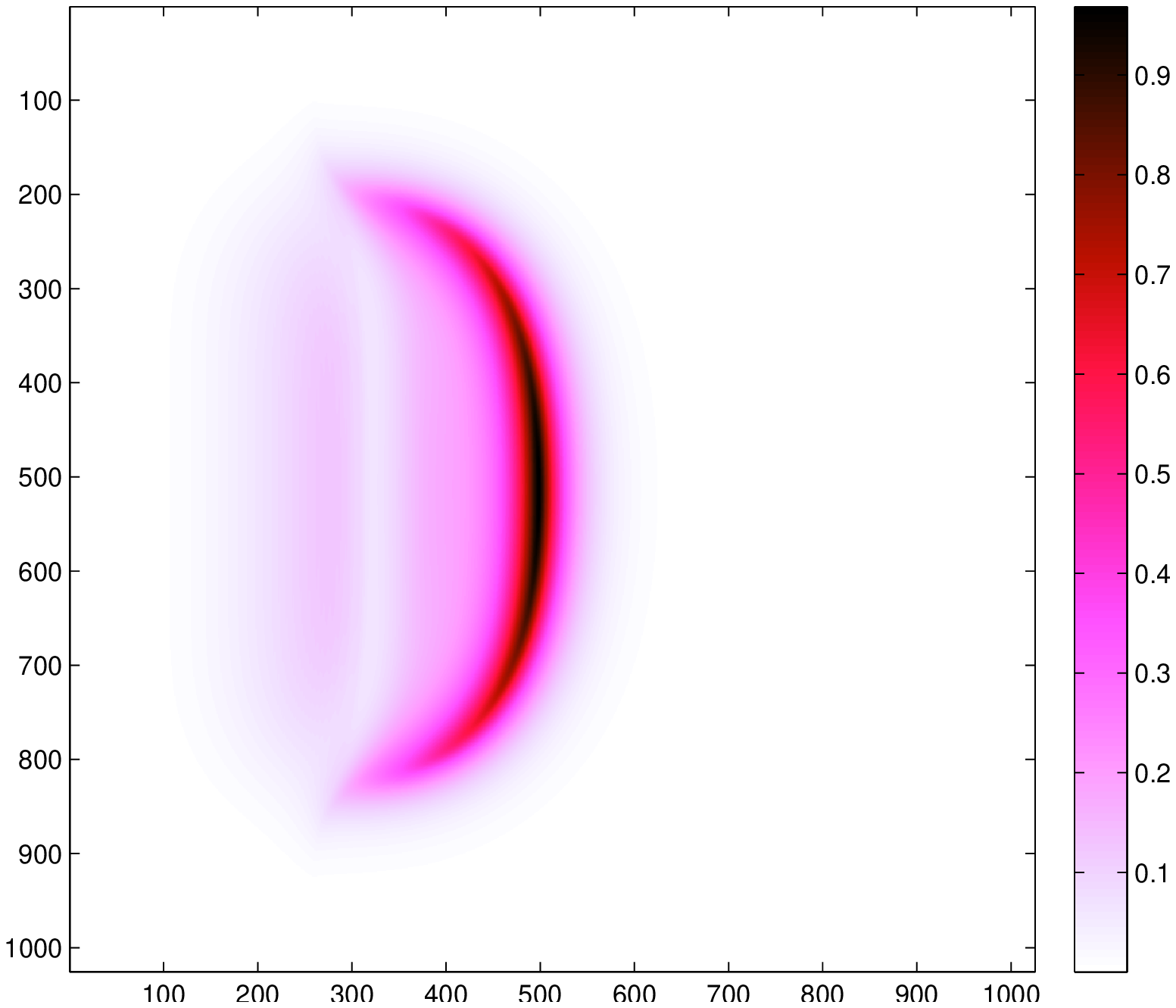}
                \caption{``Plate'', $T=0.4167$, $\alpha = \frac{\sigma}{2}$}
                \label{fig:plate23}
        \end{subfigure}
        \begin{subfigure}[b]{0.49\textwidth}
                \includegraphics[width=\textwidth]{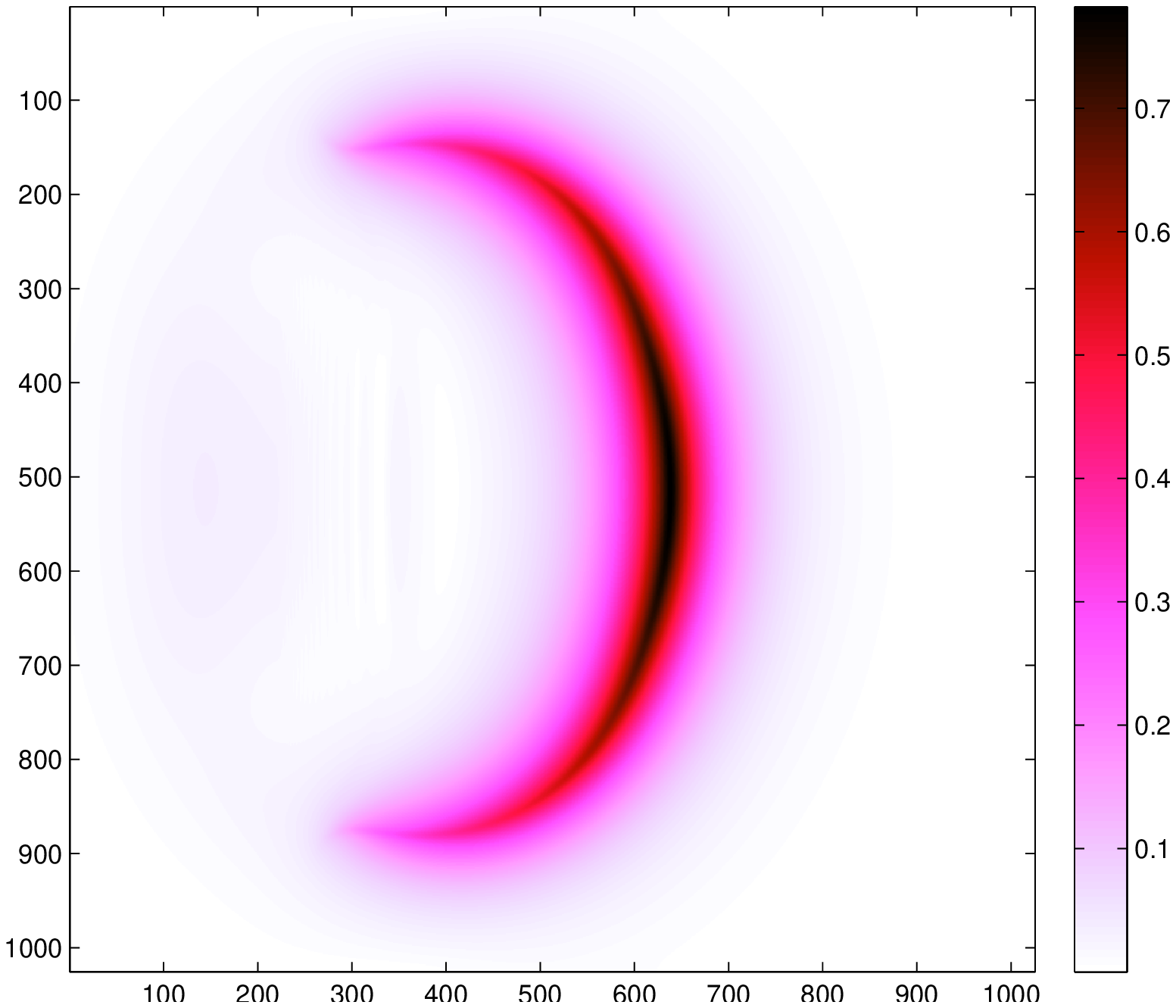}
                \caption{``Plate'', $T=0.8333$, $\alpha = \sigma$}
                \label{fig:plate15}
        \end{subfigure}
        \begin{subfigure}[b]{0.49\textwidth}
                \includegraphics[width=\textwidth]{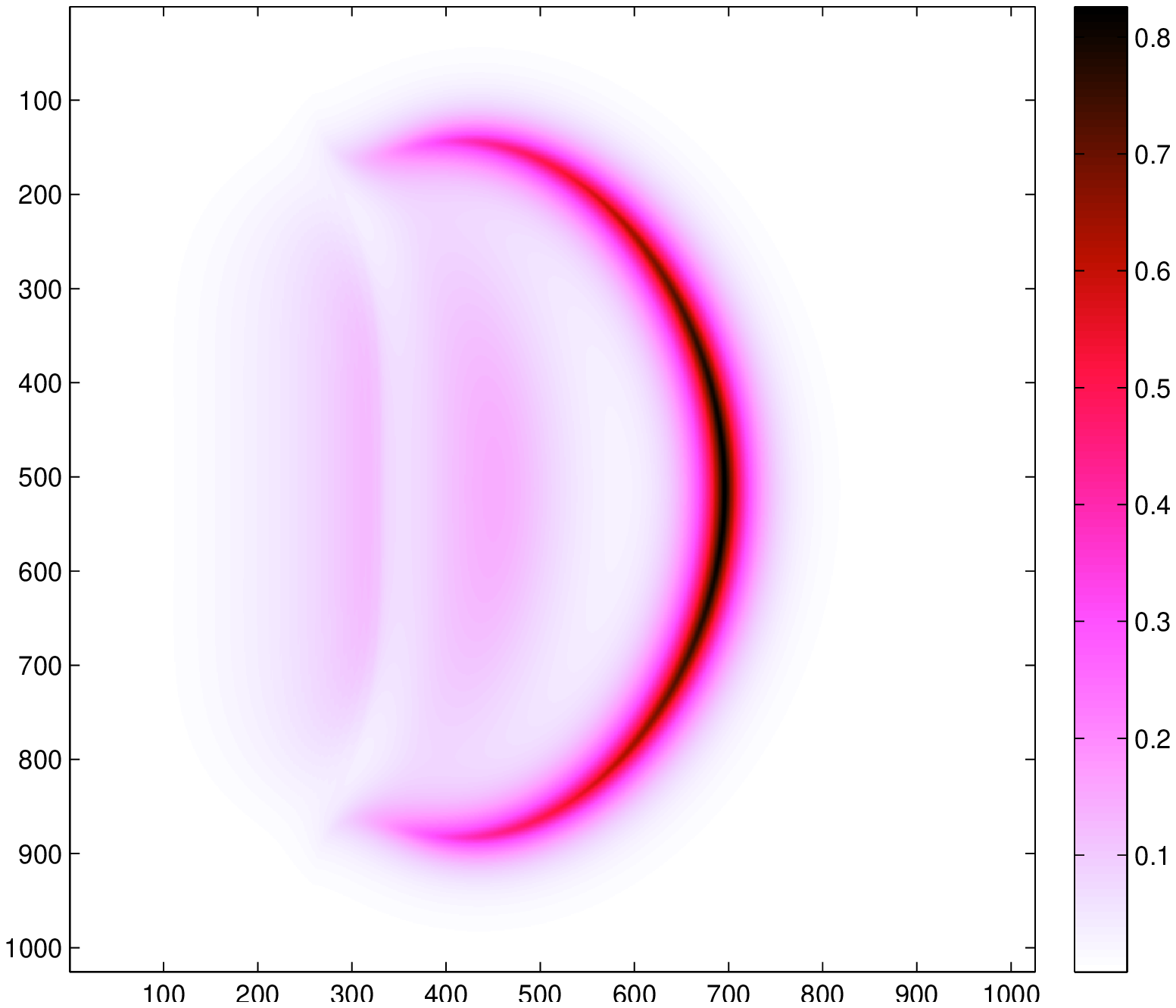}
                \caption{``Plate'', $T=0.8333$, $\alpha = \frac{\sigma}{2}$}
                \label{fig:plate25}
        \end{subfigure}
        \begin{subfigure}[b]{0.49\textwidth}
                \includegraphics[width=\textwidth]{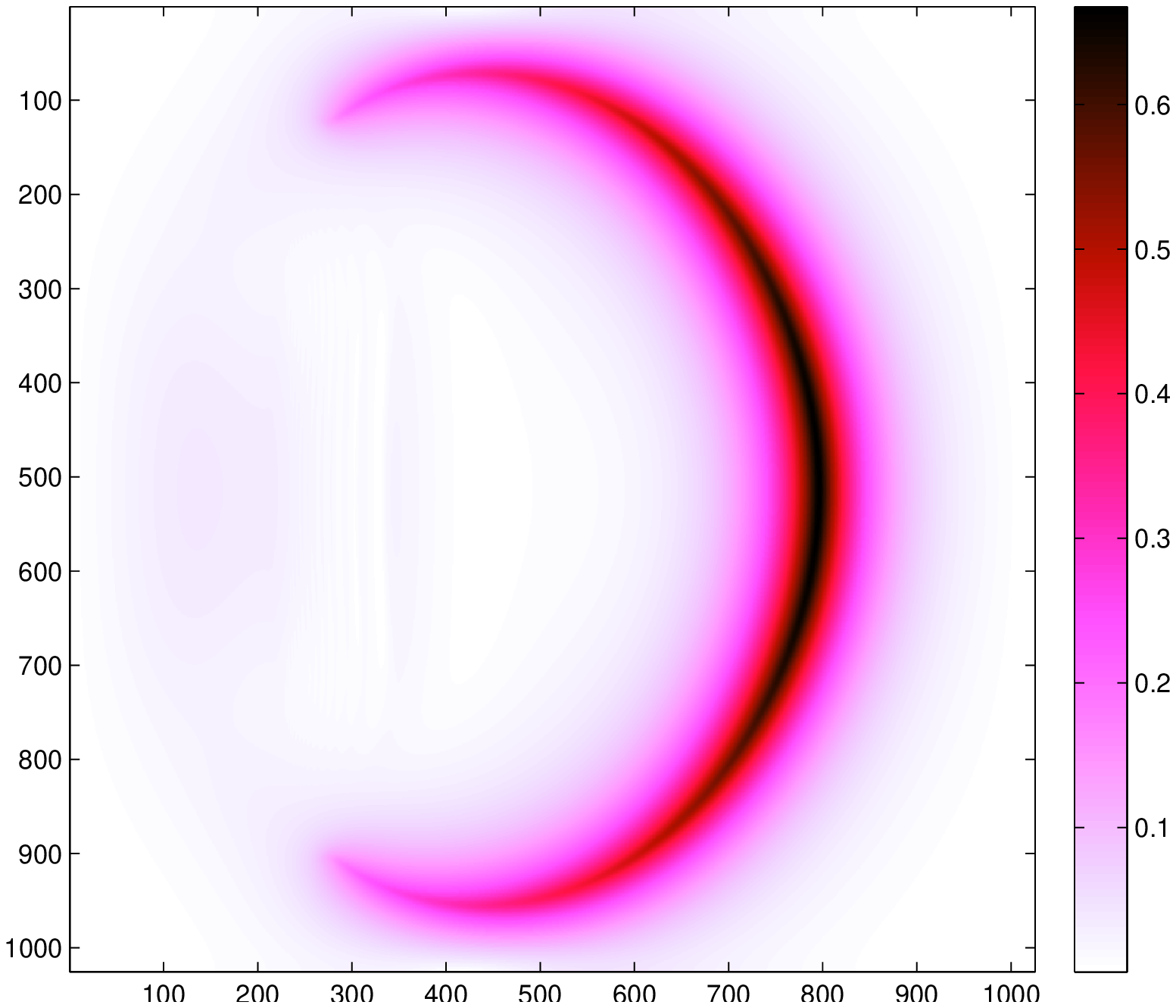}
                \caption{``Plate'', $T=1.25$, $\alpha = \sigma$}
                \label{fig:plate17}
        \end{subfigure}
        \begin{subfigure}[b]{0.49\textwidth}
                \includegraphics[width=\textwidth]{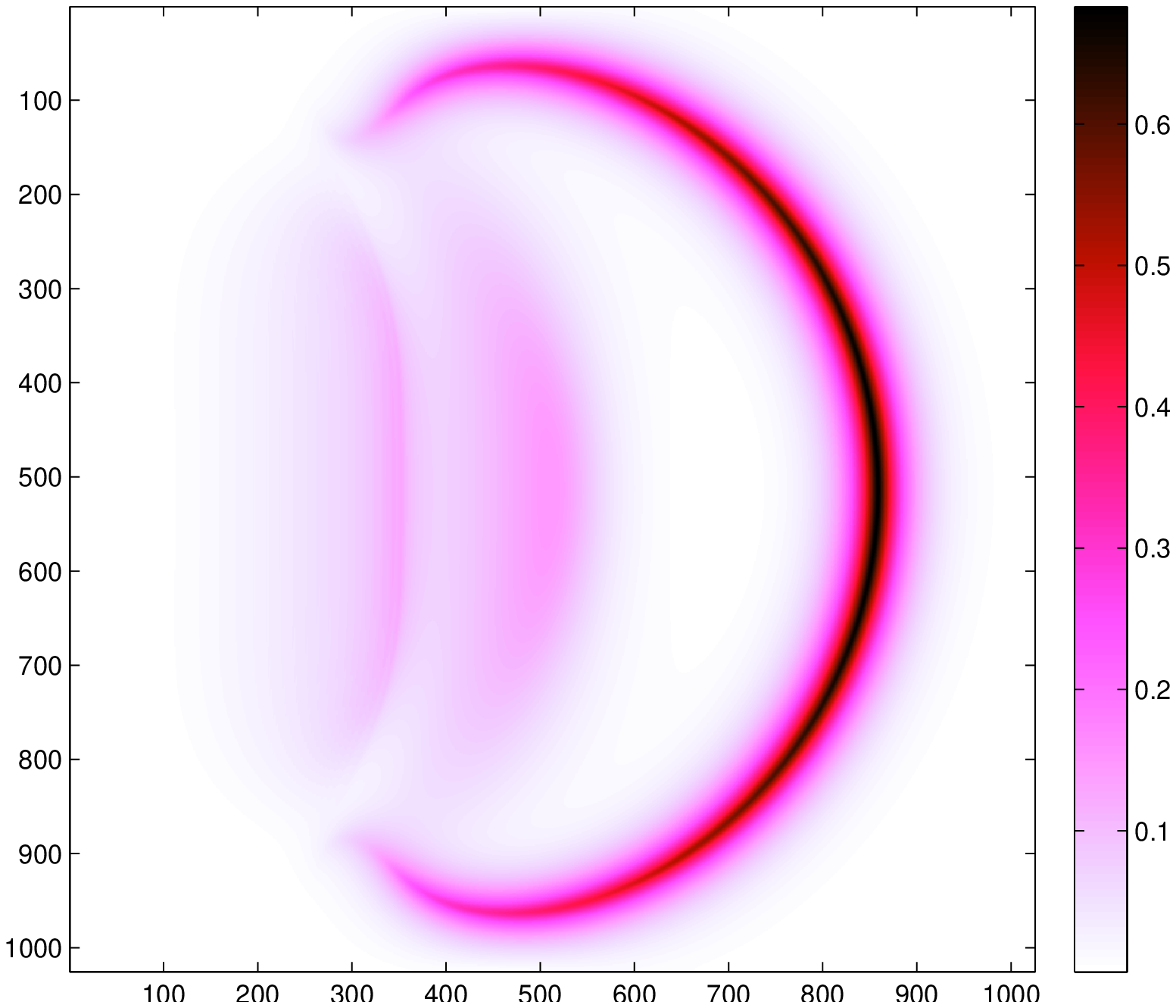}
                \caption{``Plate'', $T=1.25$, $\alpha = \frac{\sigma}{2}$}
                \label{fig:plate7}
        \end{subfigure}
\caption{Evolution of ``Plate'' on a grid $1025 \times 1025$, Scheme 
2} 
\label{fig:ev_scheme2_alpha=sigma,sigma/2}
\end{figure}


\begin{figure}
        \centering
        \begin{subfigure}[b]{0.49\textwidth}
                \includegraphics[width=\textwidth]{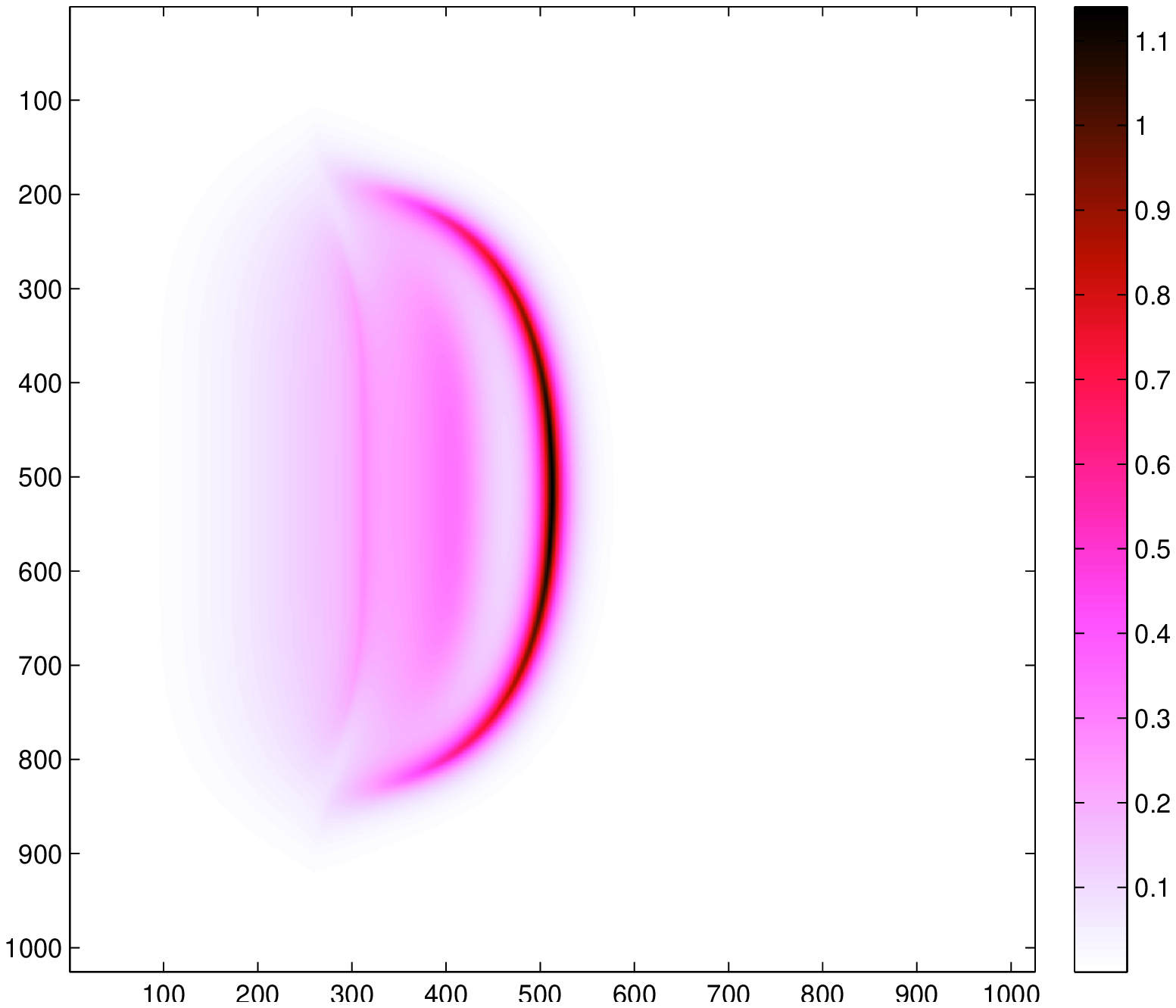}
                \caption{``Plate'', $T=0.3667$, $\alpha = \frac{\sigma}{4}$}
                \label{fig:plate43}
        \end{subfigure}
        \begin{subfigure}[b]{0.49\textwidth}
                \includegraphics[width=\textwidth]{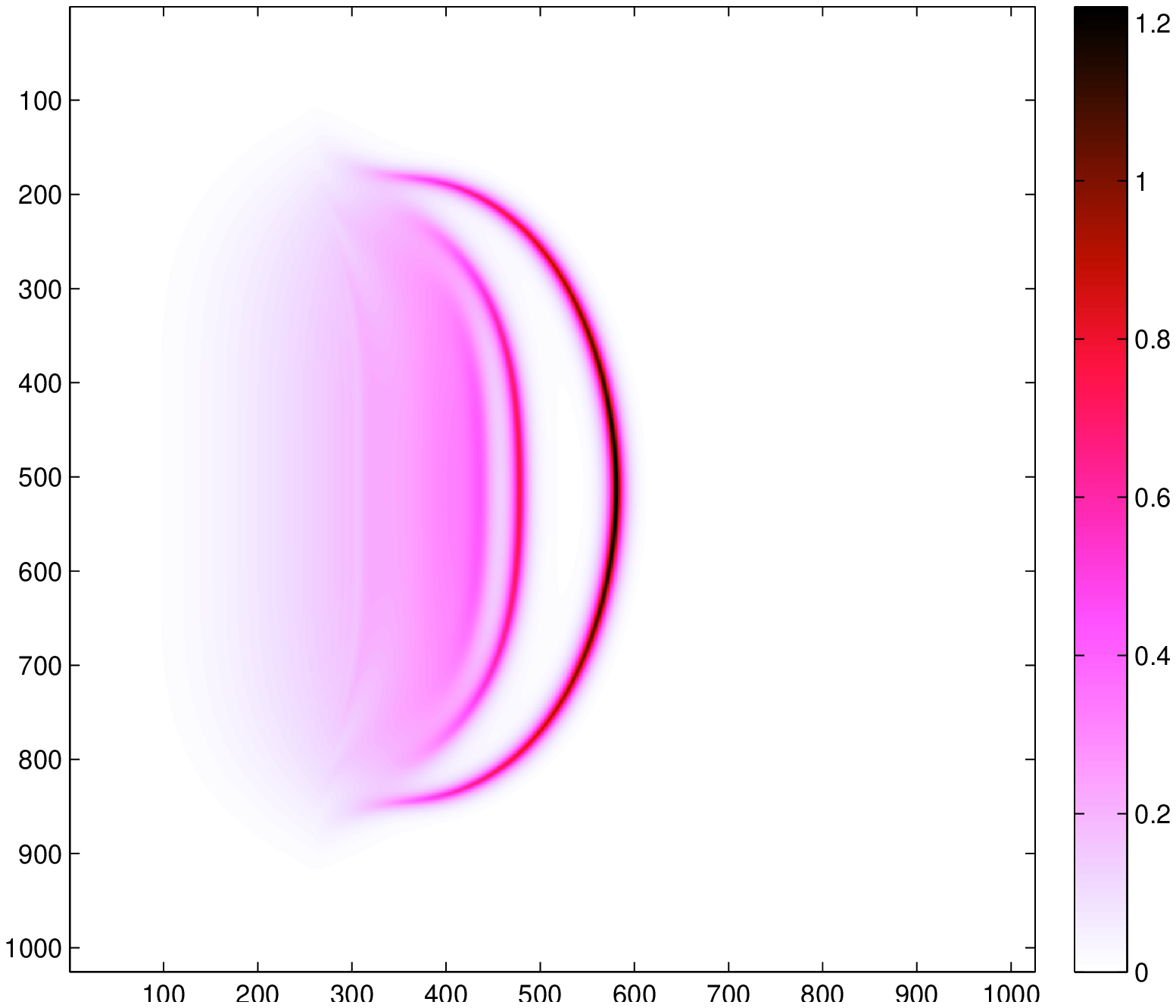}
                \caption{``Plate'', $T=0.3667$, $\alpha = \frac{\sigma}{8}$}
                \label{fig:plate83}
        \end{subfigure}
        \begin{subfigure}[b]{0.49\textwidth}
                \includegraphics[width=\textwidth]{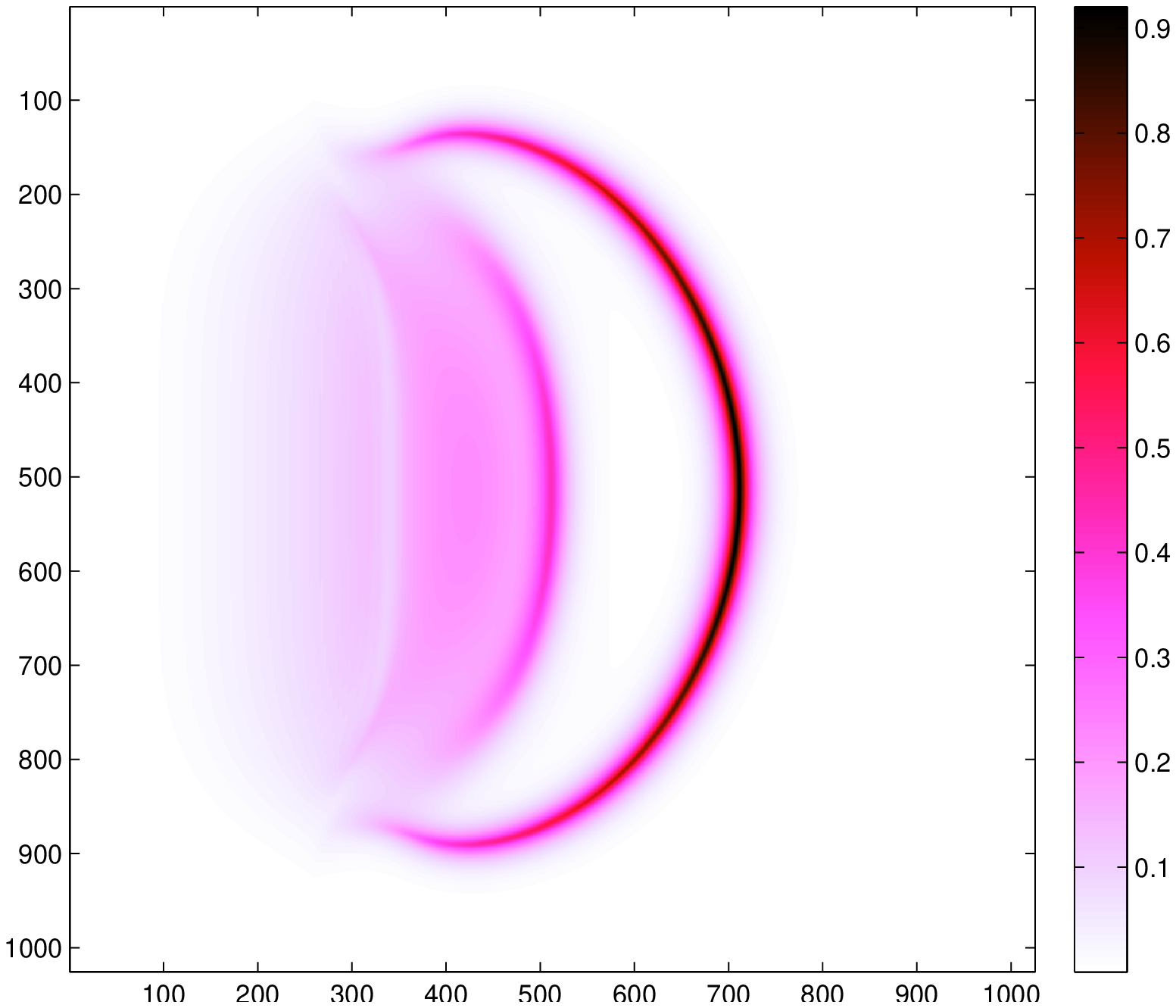}
                \caption{``Plate'', $T=0.7333$, $\alpha = \frac{\sigma}{4}$}
                \label{fig:plate45}
        \end{subfigure}
        \begin{subfigure}[b]{0.49\textwidth}
                \includegraphics[width=\textwidth]{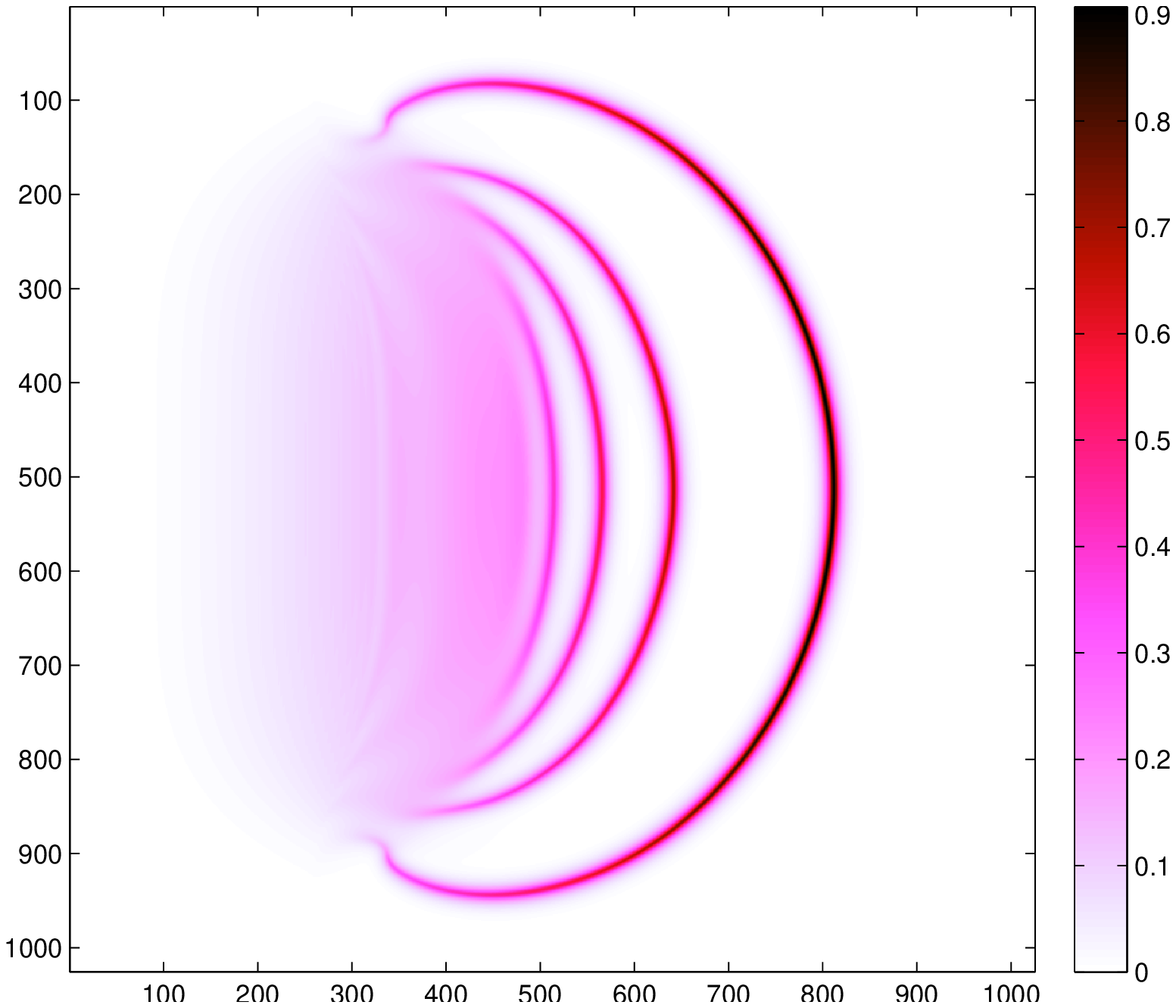}
                \caption{``Plate'', $T=0.7333$, $\alpha = \frac{\sigma}{8}$}
                \label{fig:plate85}
        \end{subfigure}
        \begin{subfigure}[b]{0.49\textwidth}
                \includegraphics[width=\textwidth]{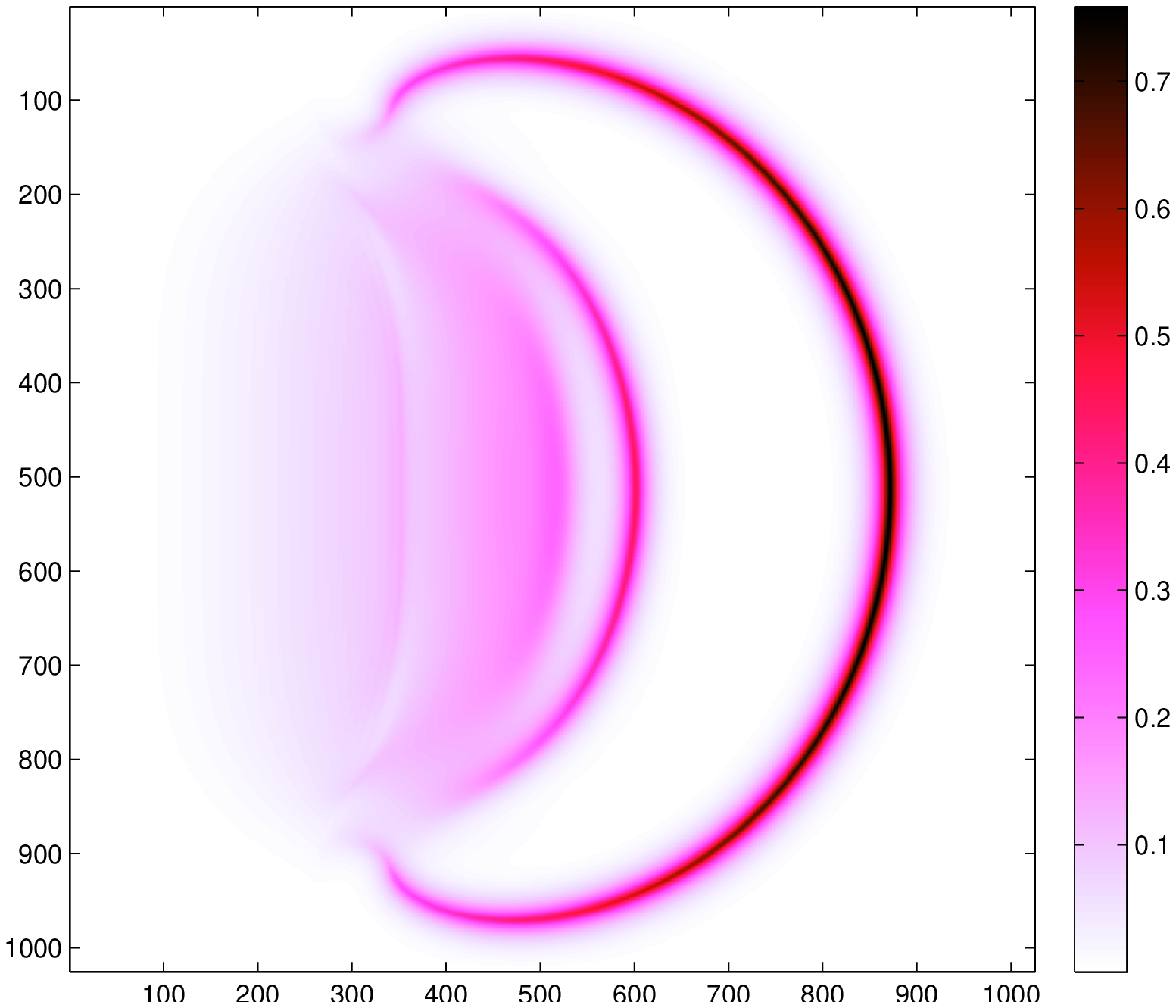}
                \caption{``Plate'', $T=1.1$, $\alpha = \frac{\sigma}{4}$}
                \label{fig:plate47}
        \end{subfigure}
        \begin{subfigure}[b]{0.49\textwidth}
                \includegraphics[width=\textwidth]{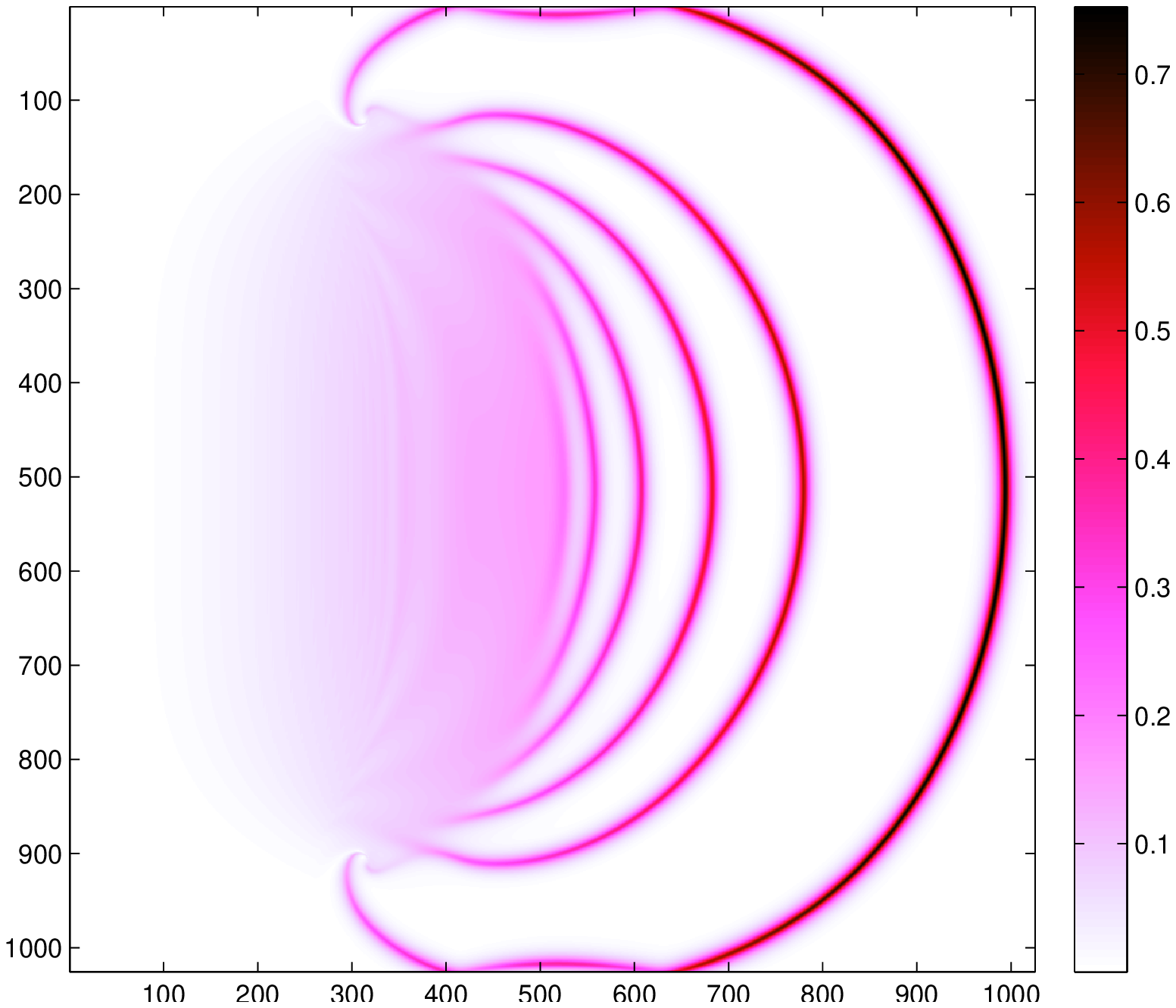}
                \caption{``Plate'', $T=1.1$, $\alpha = \frac{\sigma}{8}$}
                \label{fig:plate87}
        \end{subfigure}
\caption{Evolution of ``Plate'' on a grid $1025 \times 1025$, Scheme 
2}
\label{fig:ev_scheme2_alpha=sigma/4,sigma/8}
\end{figure}


\begin{figure}
\centering
  \begin{subfigure}[b]{0.49\textwidth}
    \includegraphics[width=\textwidth]{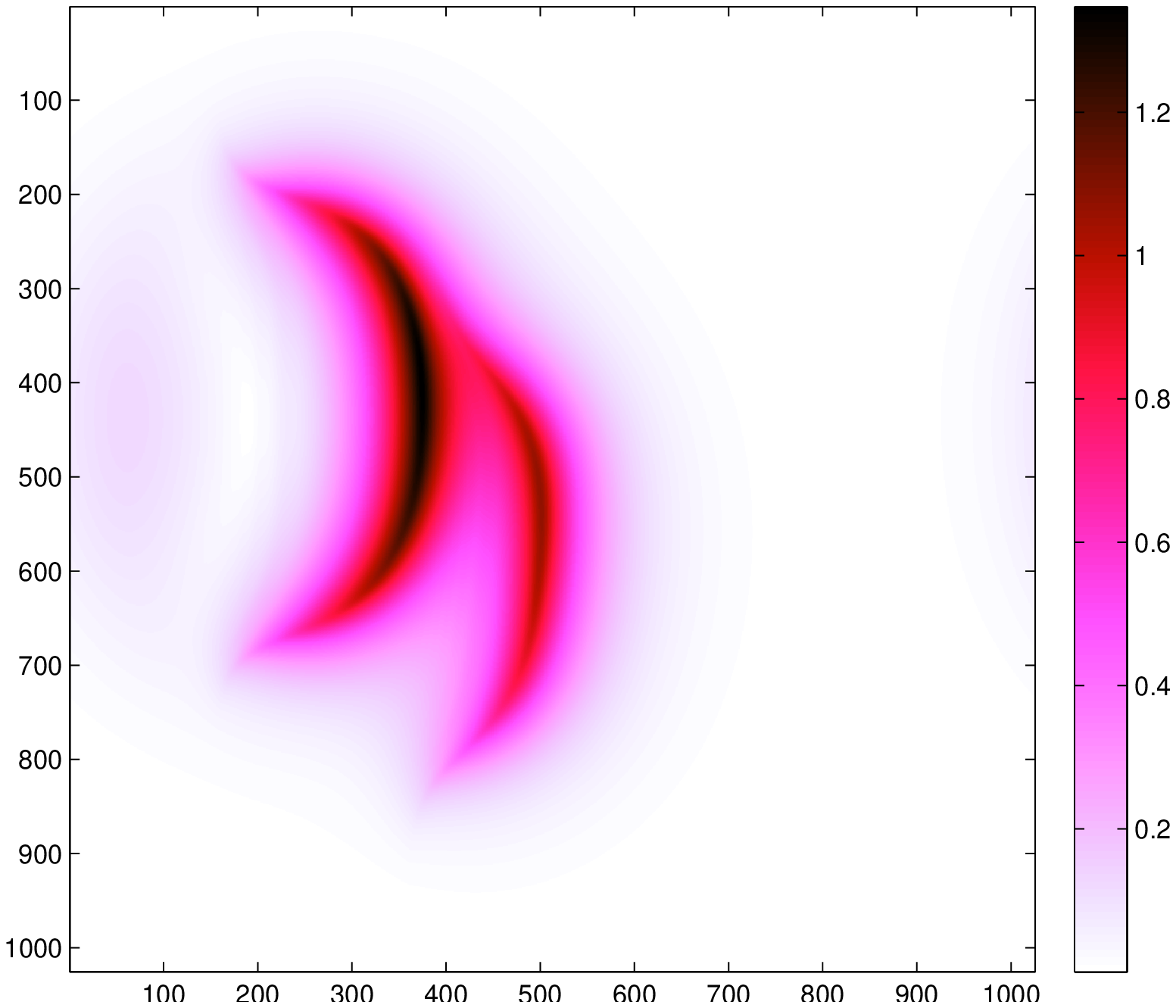}
                \caption{``Parallel'', $T=0.266$, $\alpha = \sigma$}
    \label{fig:parallel13}
  \end{subfigure}
  \begin{subfigure}[b]{0.49\textwidth}
    \includegraphics[width=\textwidth]{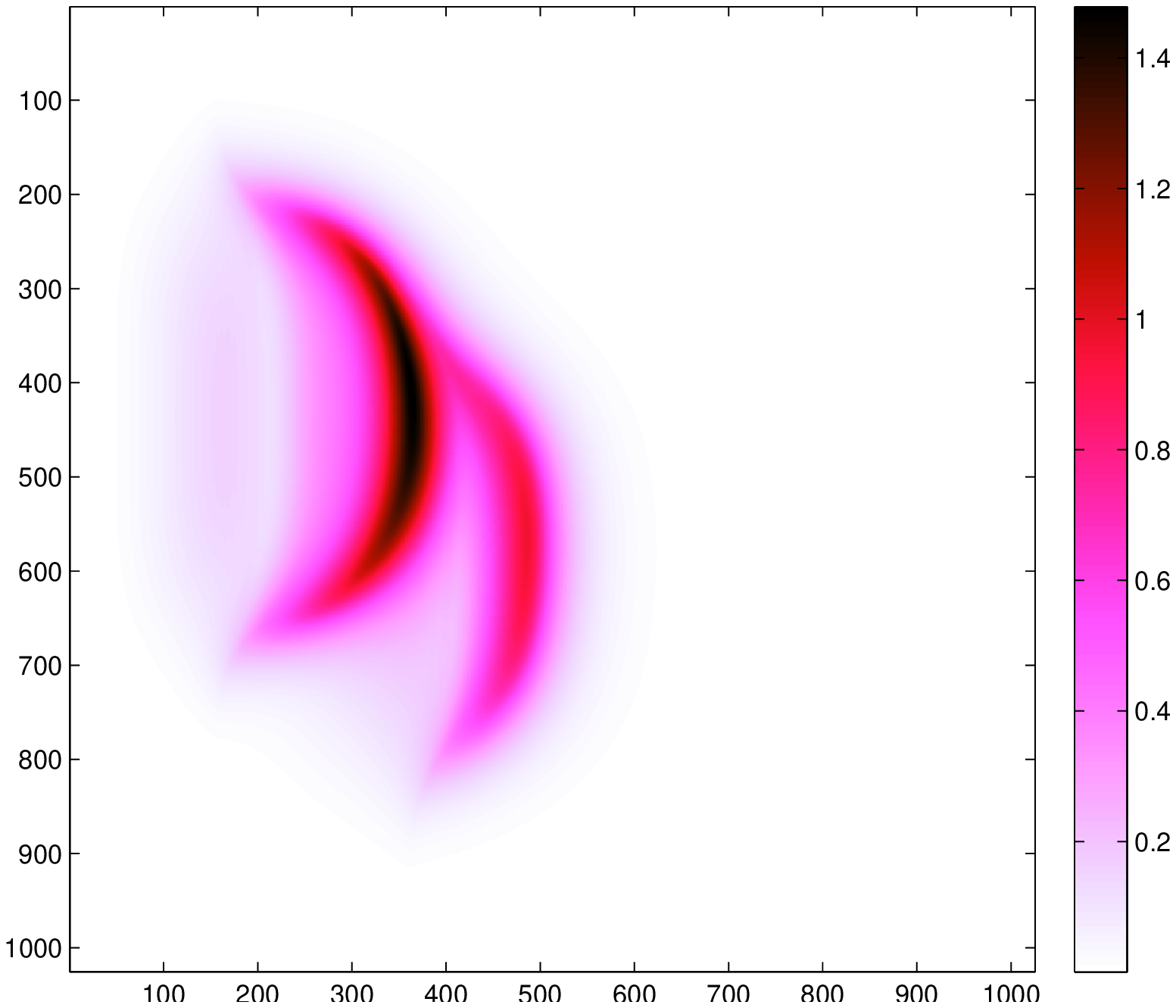}
                \caption{``Parallel'', $T=0.266$, $\alpha = \frac{\sigma}{2}$}
    \label{fig:parallel23}
  \end{subfigure}
    \begin{subfigure}[b]{0.49\textwidth}
    \includegraphics[width=\textwidth]{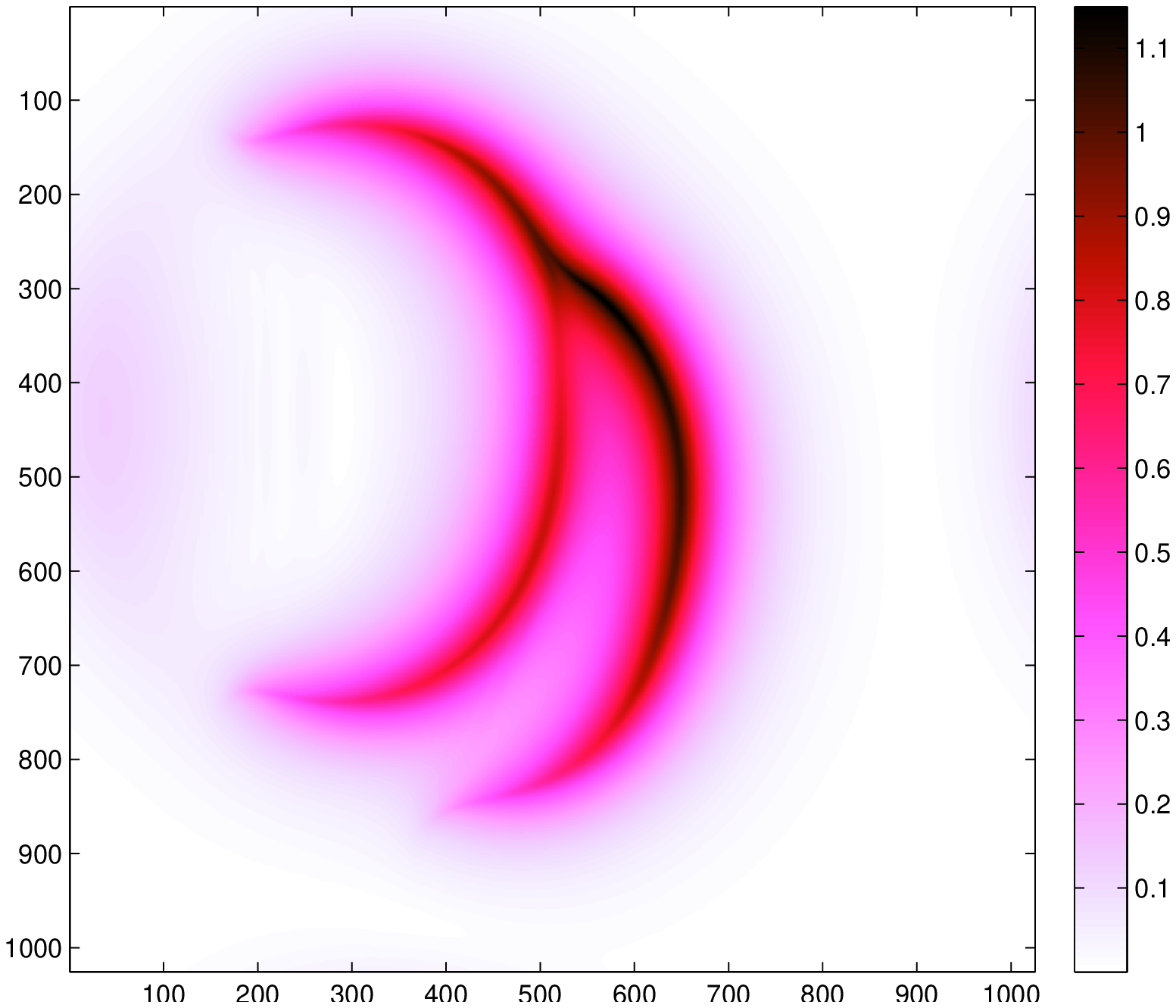}
                \caption{``Parallel'', $T=0.533$, $\alpha = \sigma$}
    \label{fig:parallel15}
  \end{subfigure}
  \begin{subfigure}[b]{0.49\textwidth}
    \includegraphics[width=\textwidth]{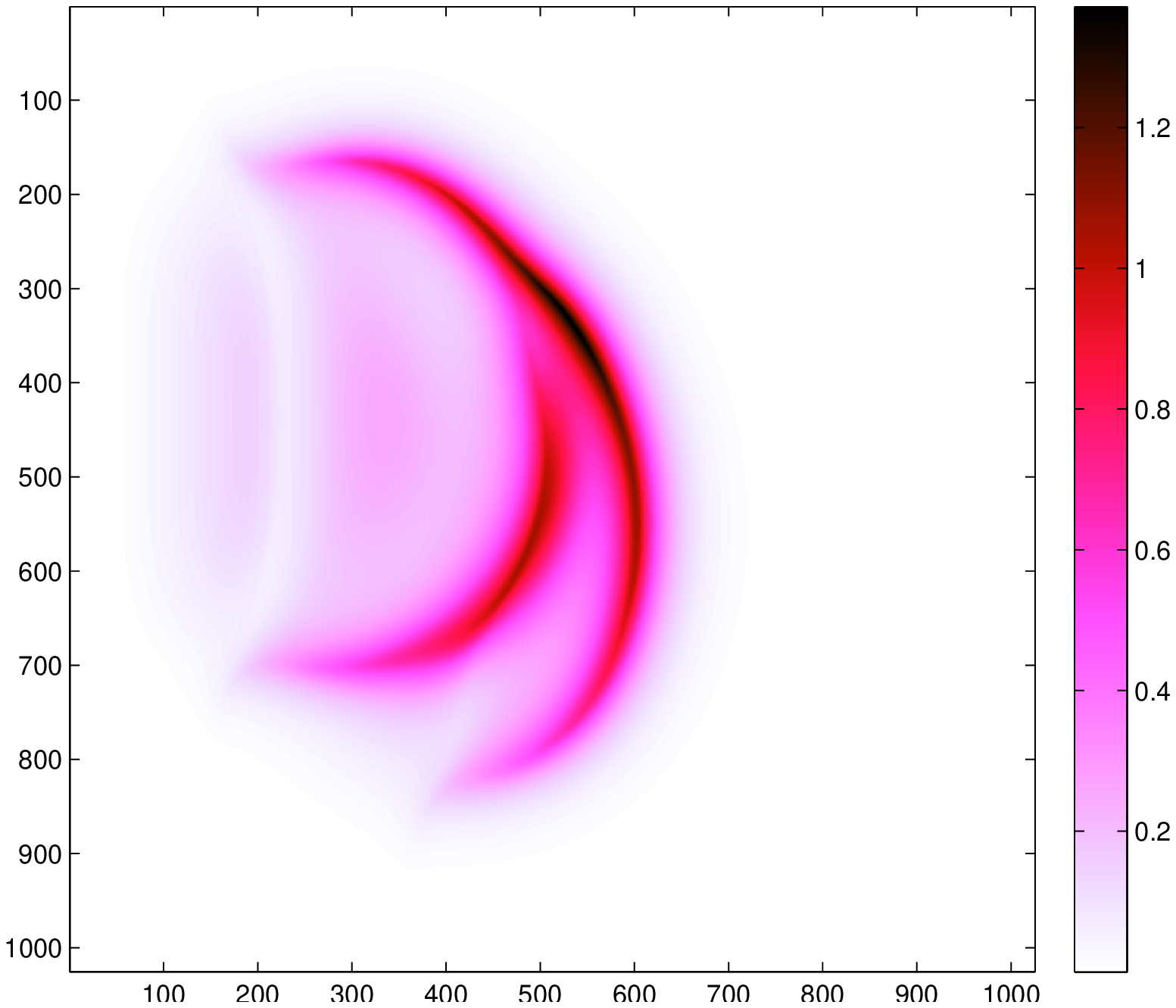}
                \caption{``Parallel'', $T=0.533$, $\alpha = \frac{\sigma}{2}$}
    \label{fig:parallel25}
  \end{subfigure}
    \begin{subfigure}[b]{0.49\textwidth}
    \includegraphics[width=\textwidth]{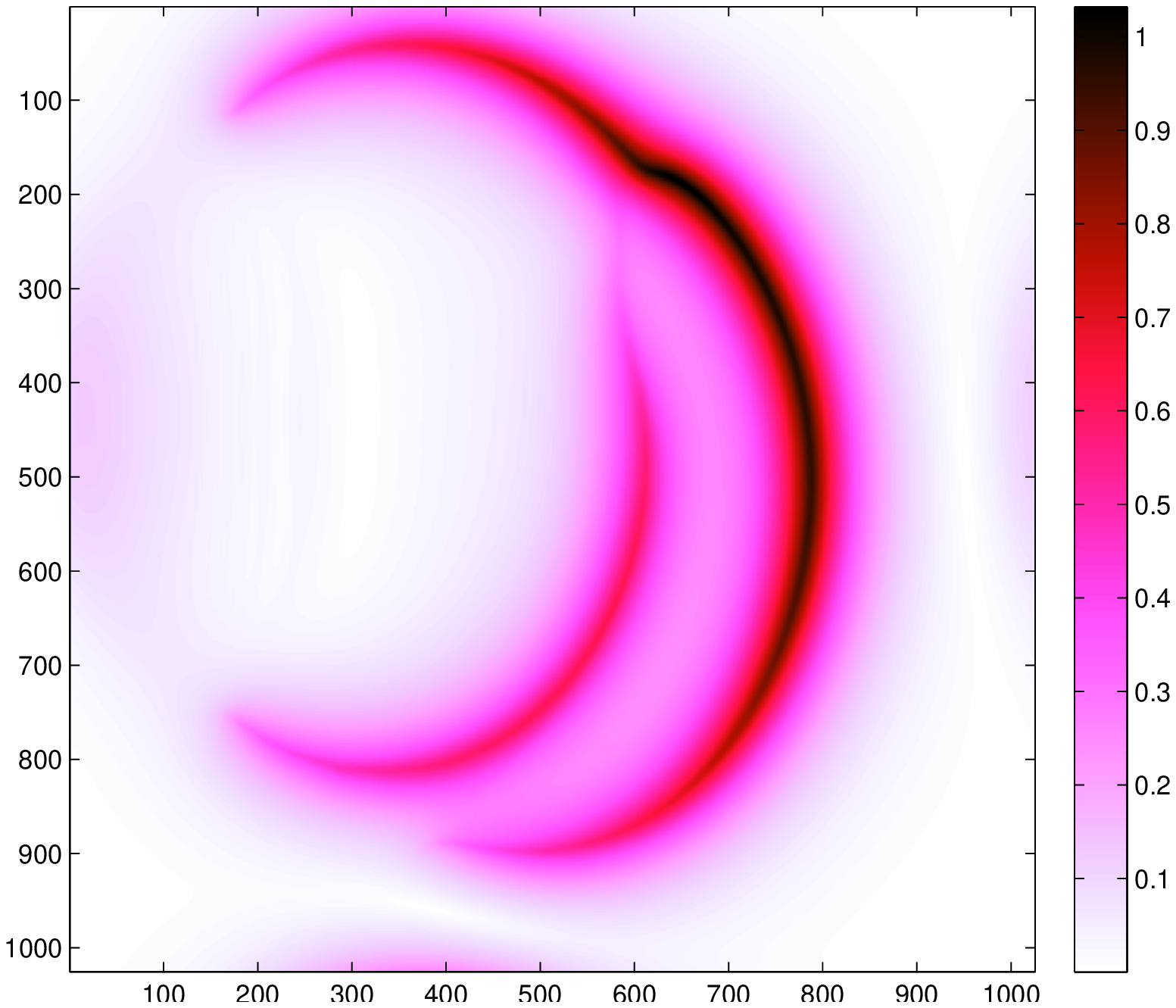}
                \caption{``Parallel'', $T=0.8$, $\alpha = \sigma$}
    \label{fig:parallel17}
  \end{subfigure}
  \begin{subfigure}[b]{0.49\textwidth}
    \includegraphics[width=\textwidth]{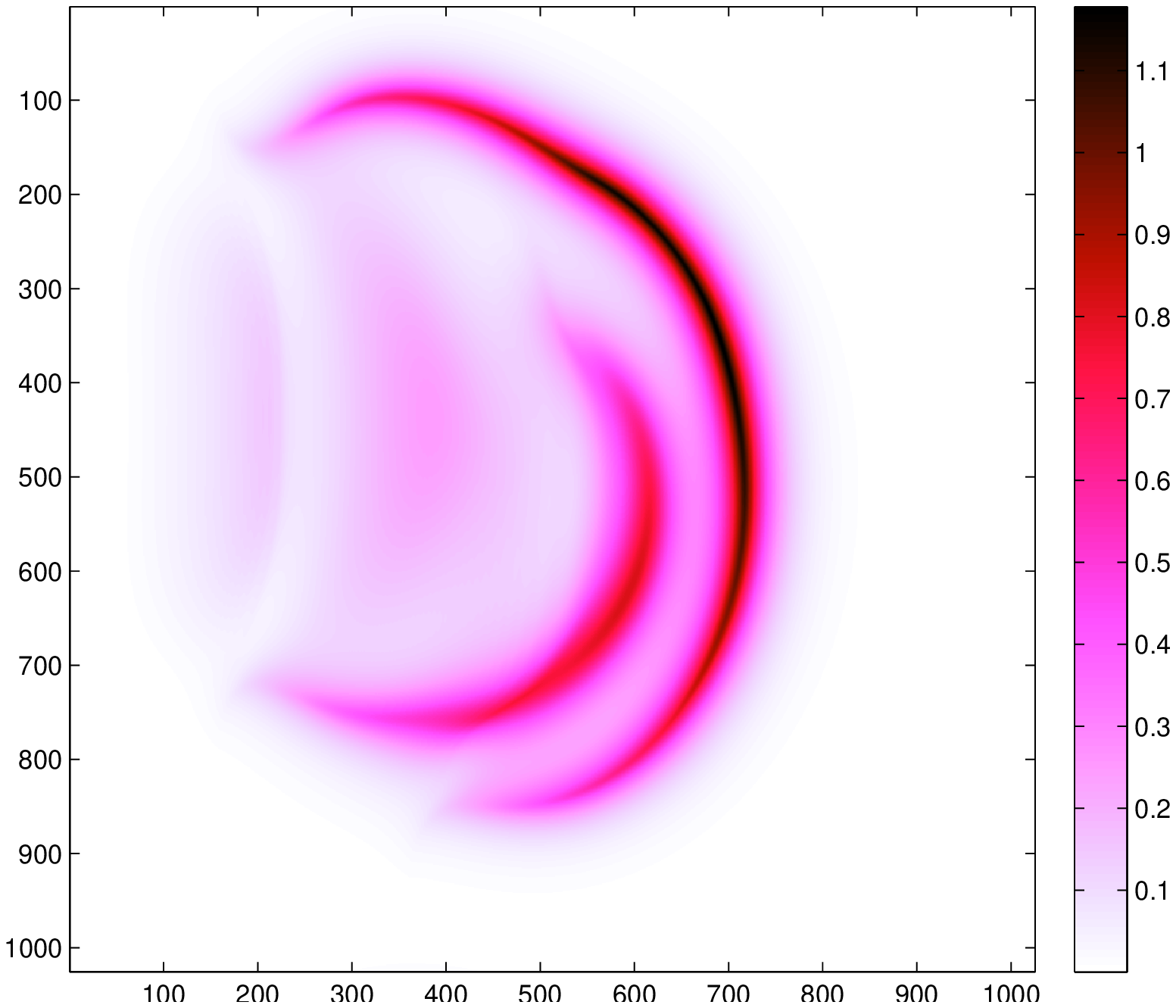}
                \caption{``Parallel'', $T=0.8$, $\alpha = \frac{\sigma}{2}$}
    \label{fig:parallel27}
  \end{subfigure}
  \caption{Evolution of ``Parallel'' on a grid $1025 \times 1025$, Scheme 
2}
\label{fig:ev_scheme2_alpha=sigma,sigma/2_parallel}
\end{figure}


\begin{figure}
        \centering
        \begin{subfigure}[b]{0.49\textwidth}
                \includegraphics[width=\textwidth]{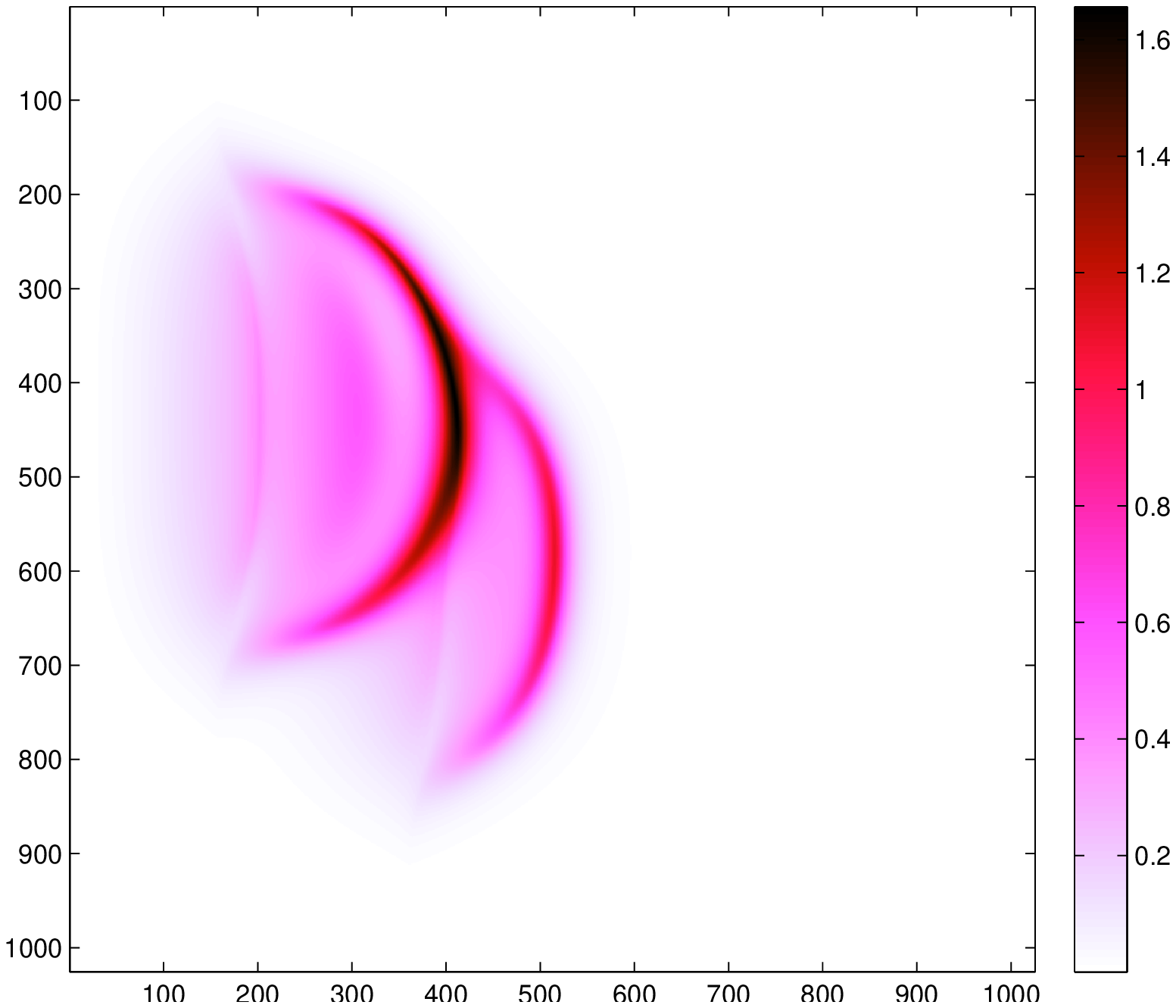}
                \caption{``Parallel'', $T=0.266$, $\alpha = \frac{\sigma}{4}$}
                \label{fig:par1_23}
        \end{subfigure}
        \begin{subfigure}[b]{0.49\textwidth}
                \includegraphics[width=\textwidth]{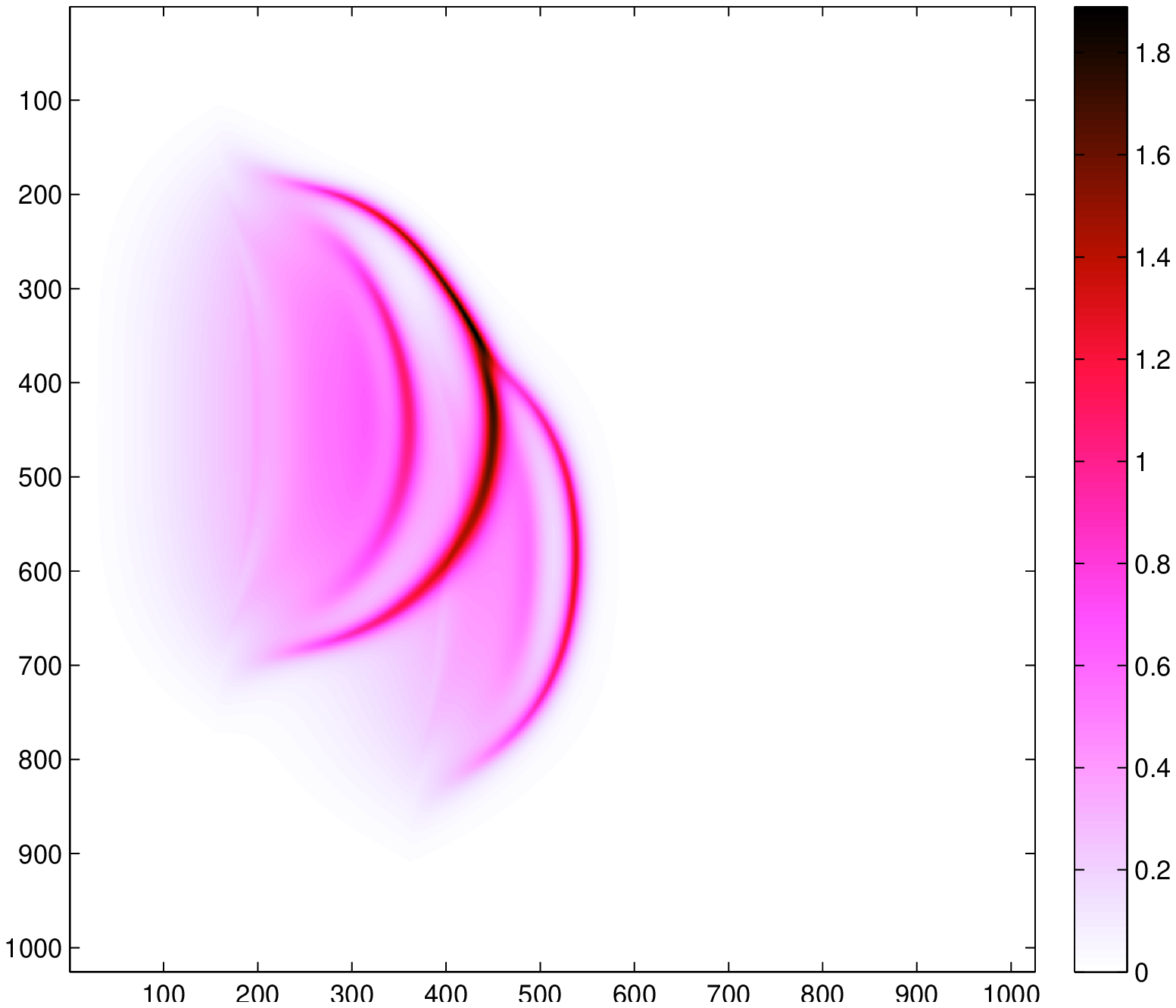}
                \caption{``Parallel'', $T=0.266$, $\alpha = \frac{\sigma}{8}$}
                \label{fig:par1_23_surf}
        \end{subfigure}
        \begin{subfigure}[b]{0.49\textwidth}
                \includegraphics[width=\textwidth]{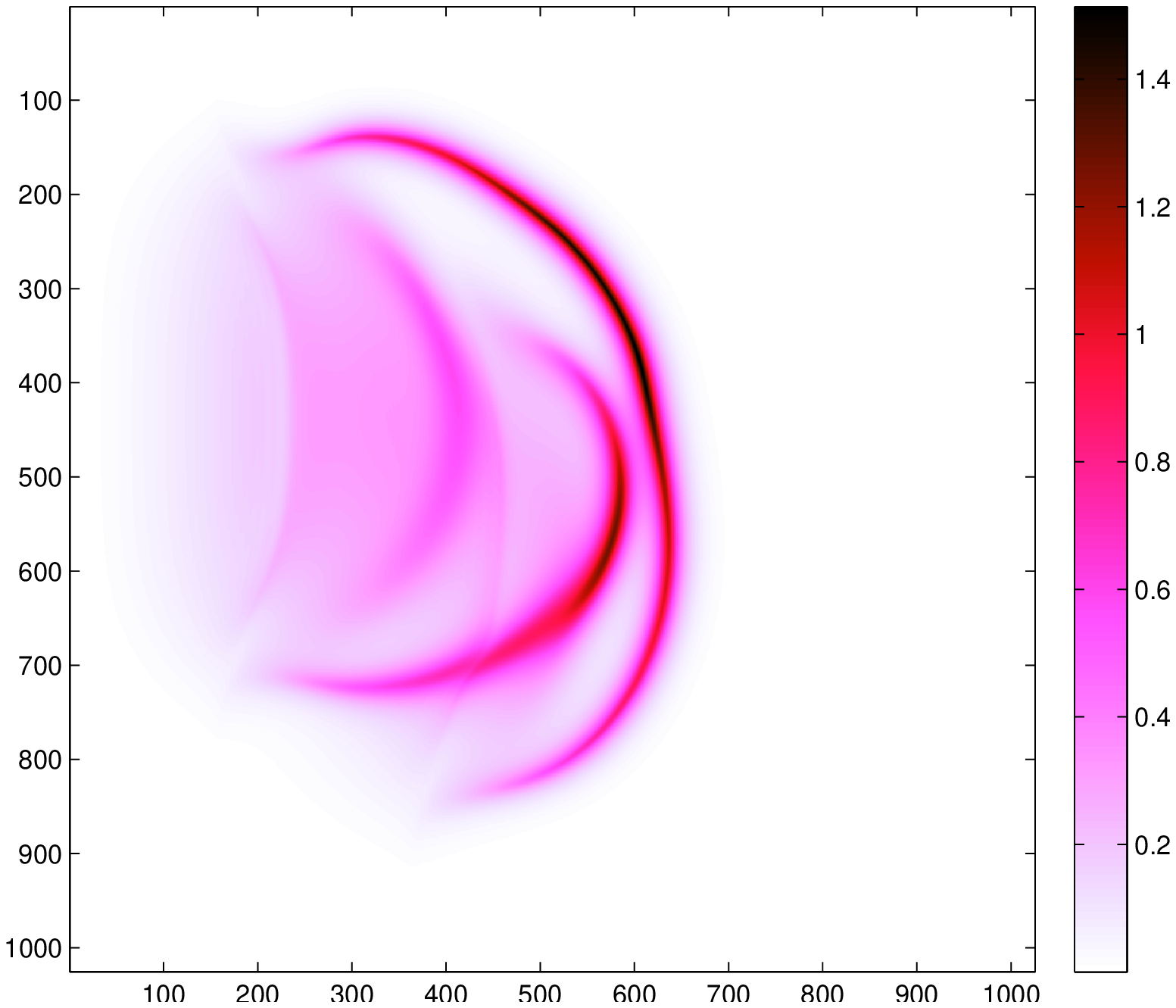}
                \caption{``Parallel'', $T=0.533$, $\alpha = \frac{\sigma}{4}$}
                \label{fig:par1_25}
        \end{subfigure}
        \begin{subfigure}[b]{0.49\textwidth}
                \includegraphics[width=\textwidth]{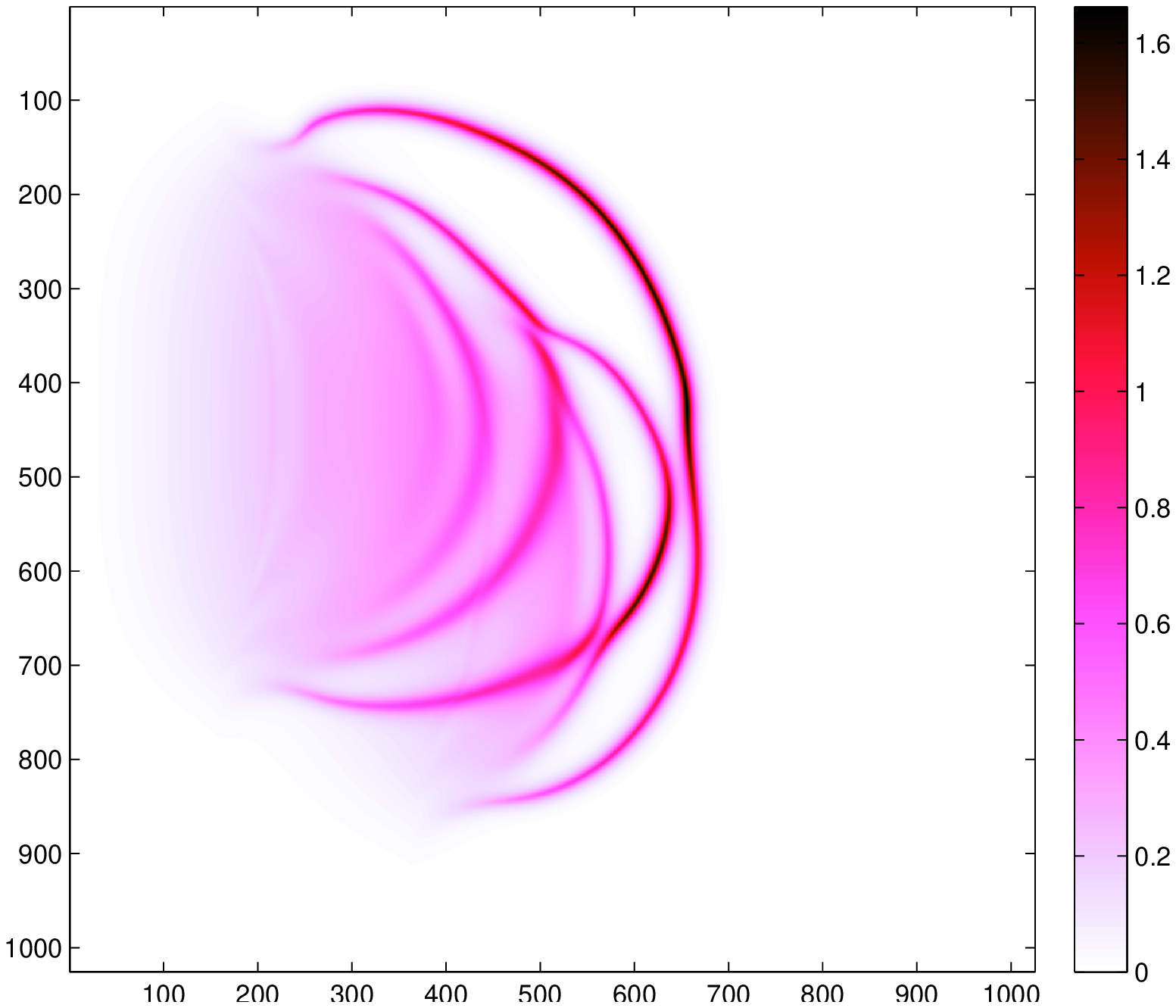}
                \caption{``Parallel'', $T=0.533$, $\alpha = \frac{\sigma}{8}$}
                \label{fig:par1_25_surf}
        \end{subfigure}
        \begin{subfigure}[b]{0.49\textwidth}
                \includegraphics[width=\textwidth]{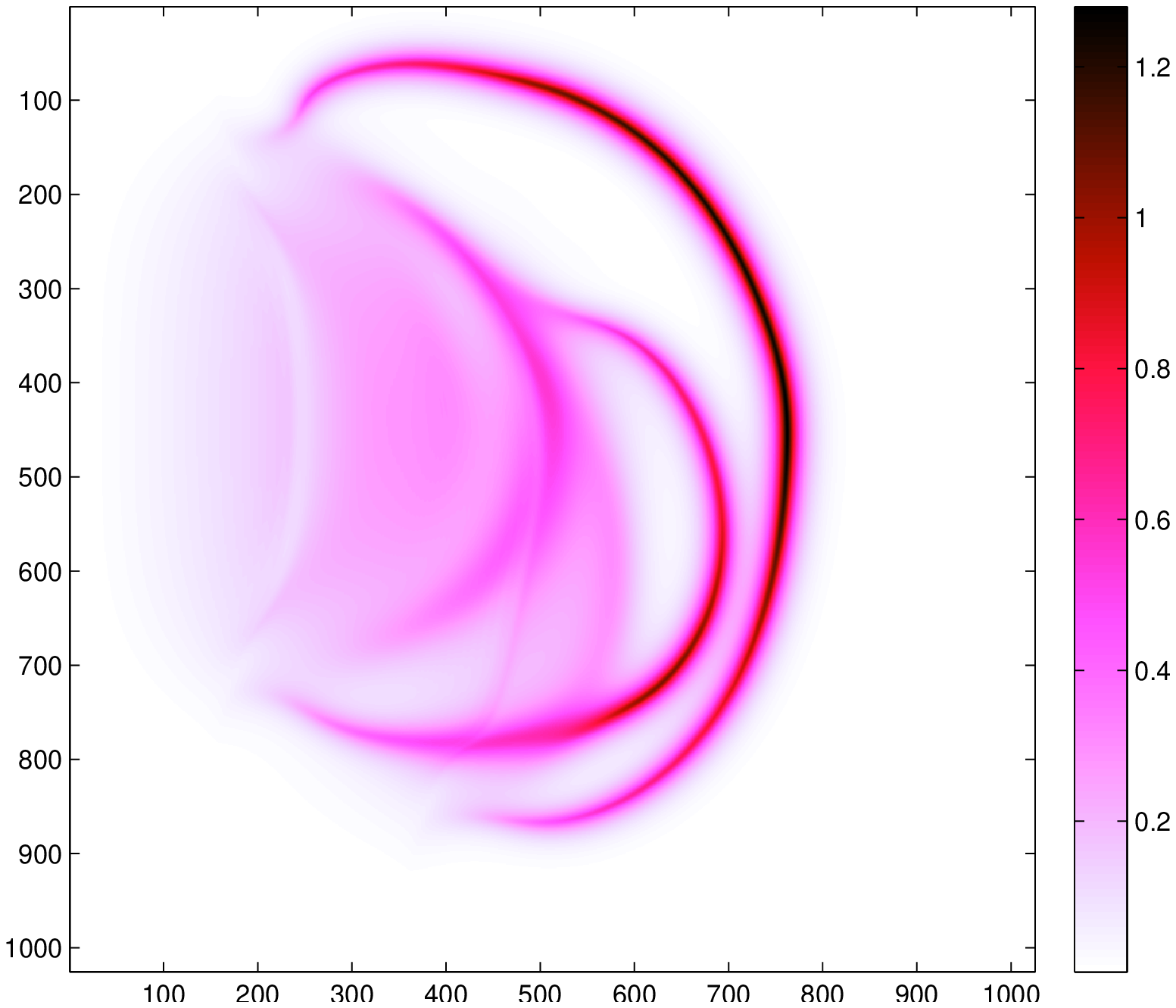}
                \caption{``Parallel'', $T=0.8$, $\alpha = \frac{\sigma}{4}$}
                \label{fig:par1_2f}
        \end{subfigure}
        \begin{subfigure}[b]{0.49\textwidth}
                \includegraphics[width=\textwidth]{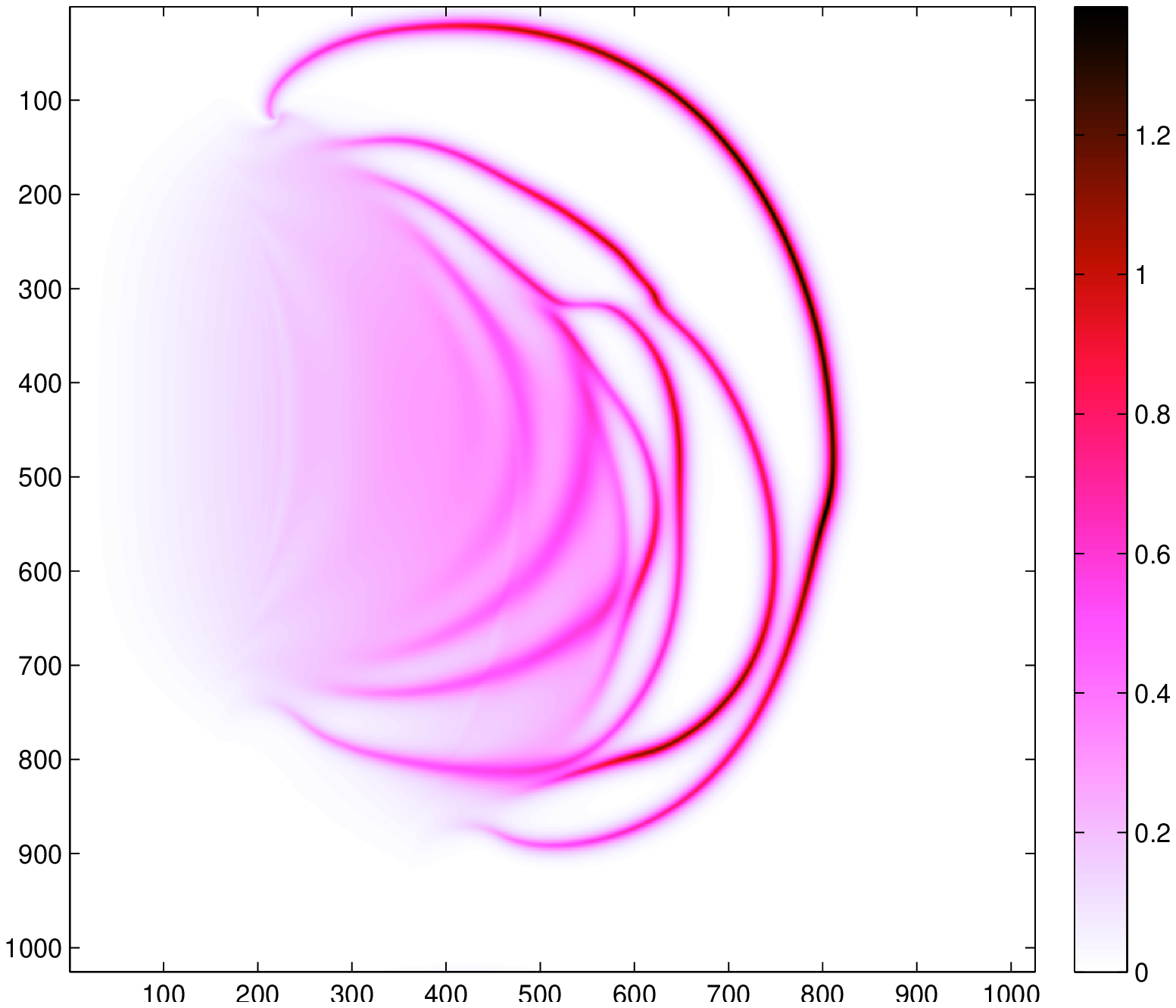}
                \caption{``Parallel'', $T=0.8$, $\alpha = \frac{\sigma}{8}$}
                \label{fig:par1_2f_surf}
        \end{subfigure}
\caption{Evolution of ``Parallel'' on a grid $1025 \times 1025$, Scheme 
2}
\label{fig:ev_scheme2_alpha=sigma/4,sigma/8_parallel}
\end{figure}


\begin{figure}
        \centering
        \begin{subfigure}[b]{0.49\textwidth}
                \includegraphics[width=\textwidth]{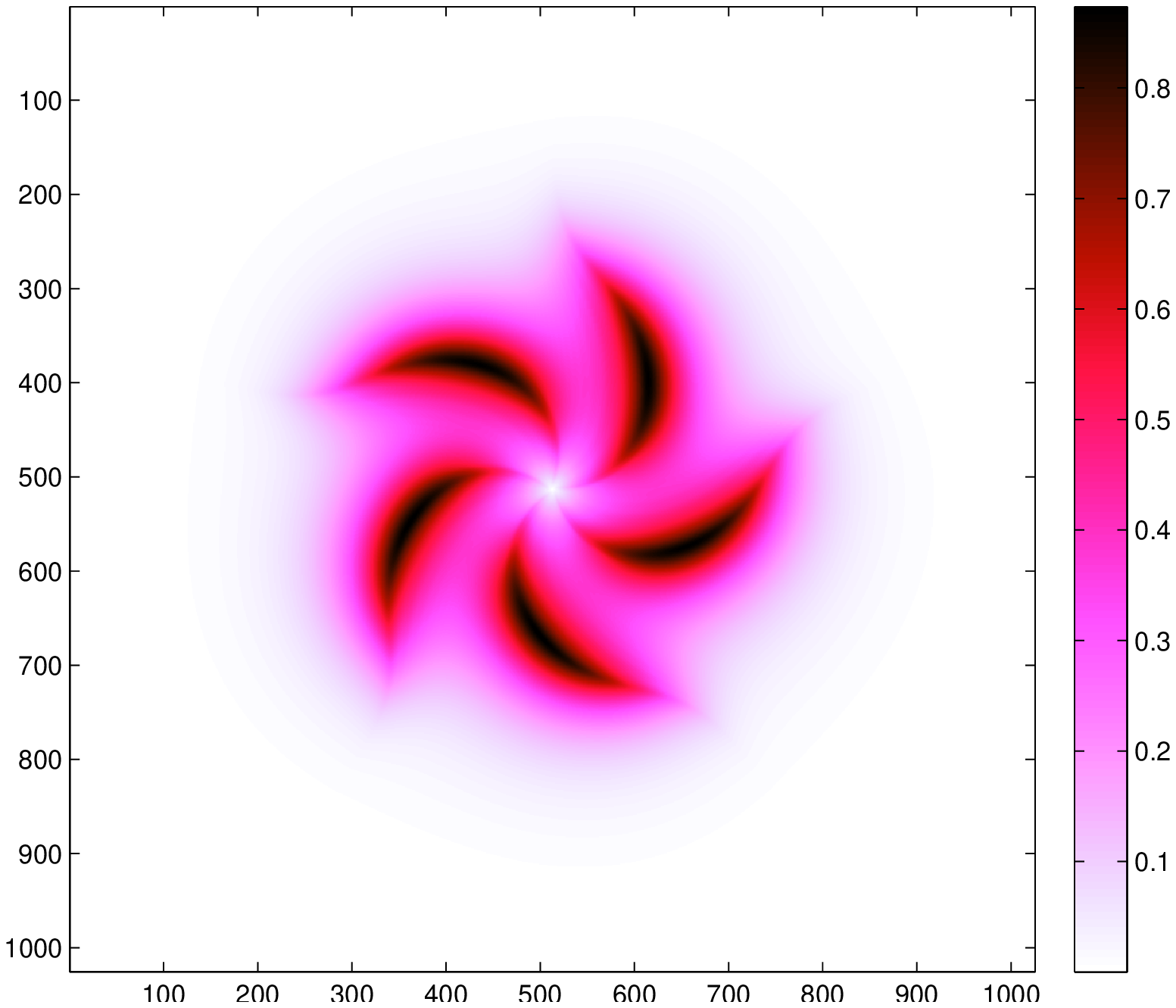}
                \caption{``Star'', $T=0.2$, $\alpha = \sigma$}
                \label{fig:star1_22}
        \end{subfigure}
        \begin{subfigure}[b]{0.49\textwidth}
                \includegraphics[width=\textwidth]{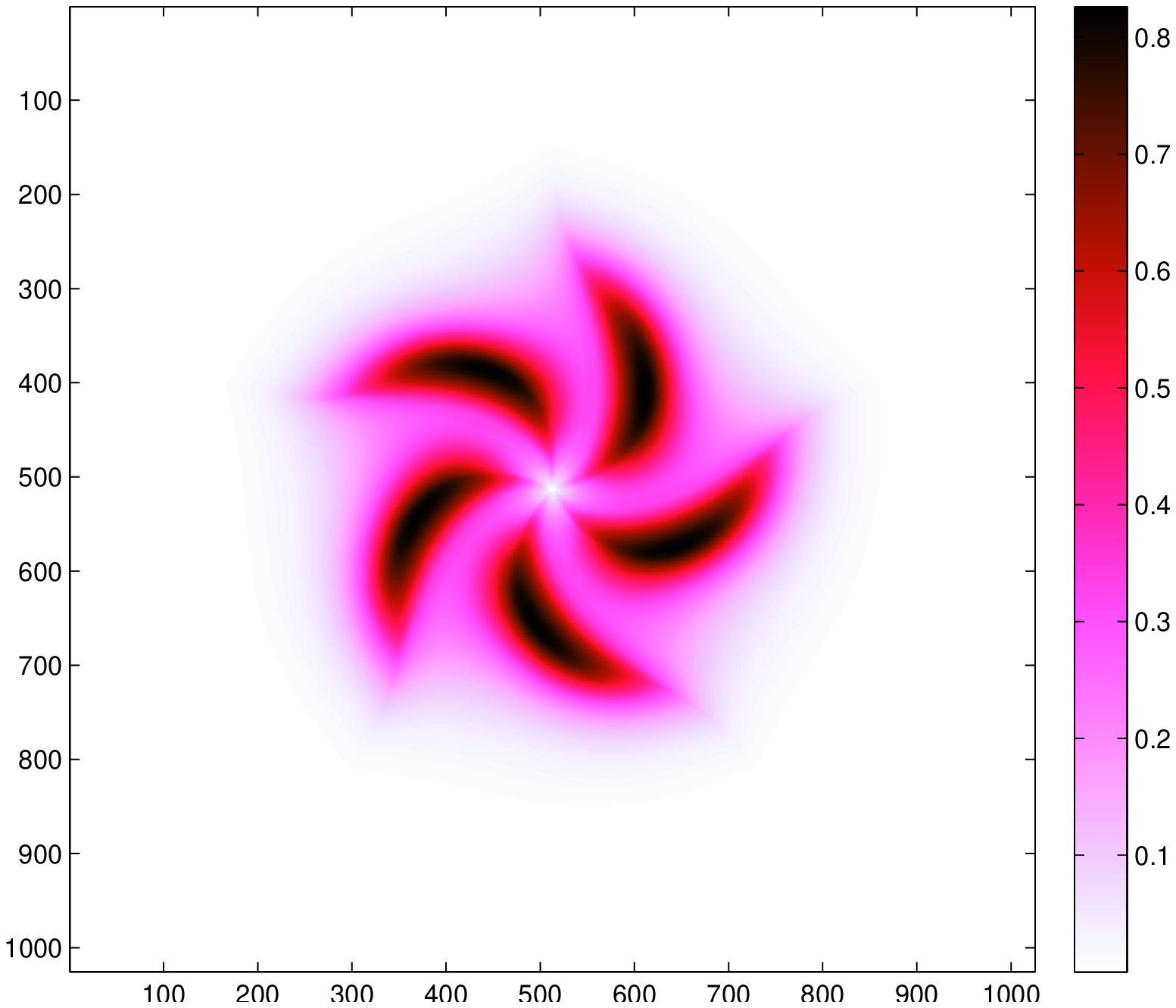}
                \caption{``Star'', $T=0.2$, $\alpha = \frac{\sigma}{2}$}
                \label{fig:star1_22_surf}
        \end{subfigure}
        \begin{subfigure}[b]{0.49\textwidth}
                \includegraphics[width=\textwidth]{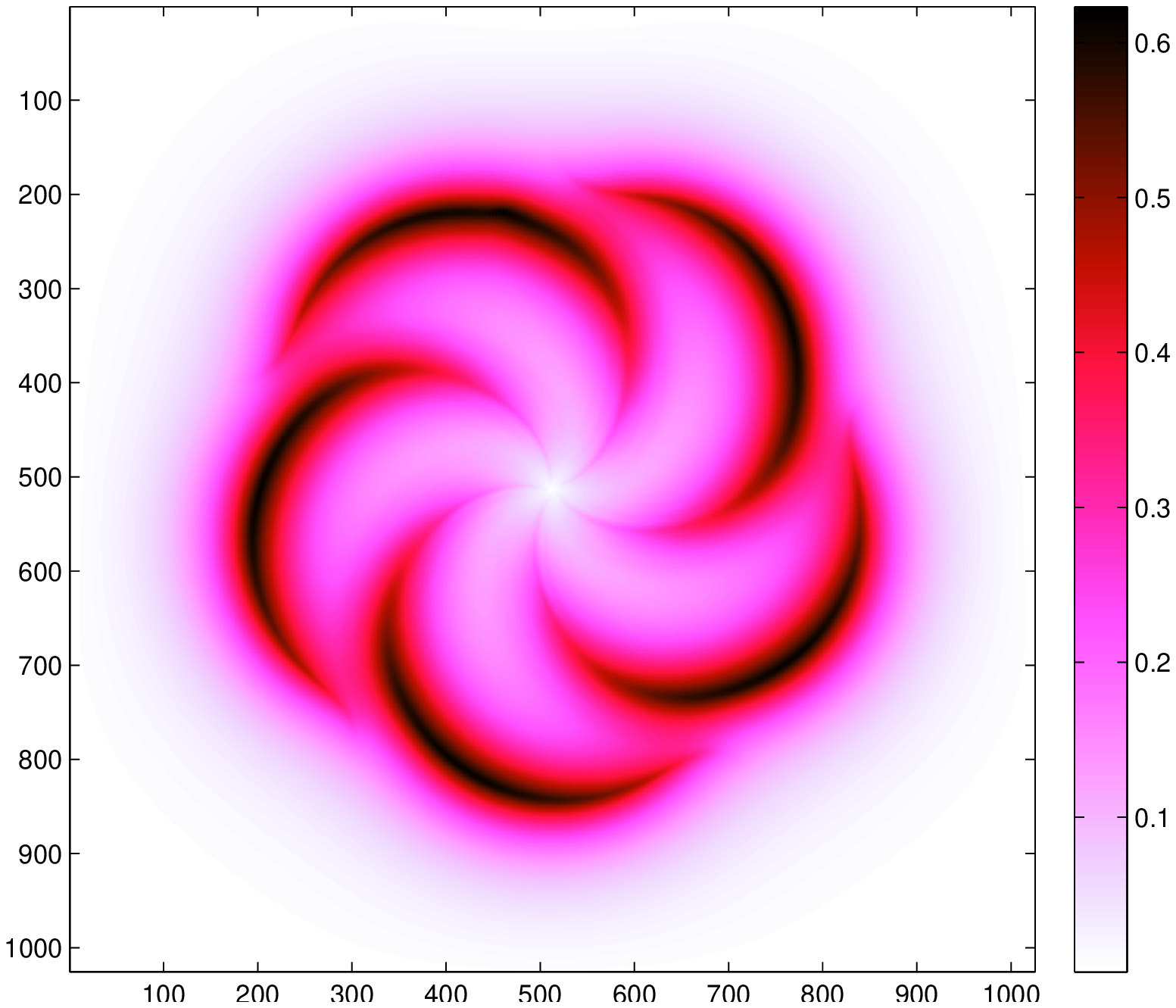}
                \caption{``Star'', $T=0.6$, $\alpha = \sigma$}
                \label{fig:star1_24}
        \end{subfigure}
        \begin{subfigure}[b]{0.49\textwidth}
                \includegraphics[width=\textwidth]{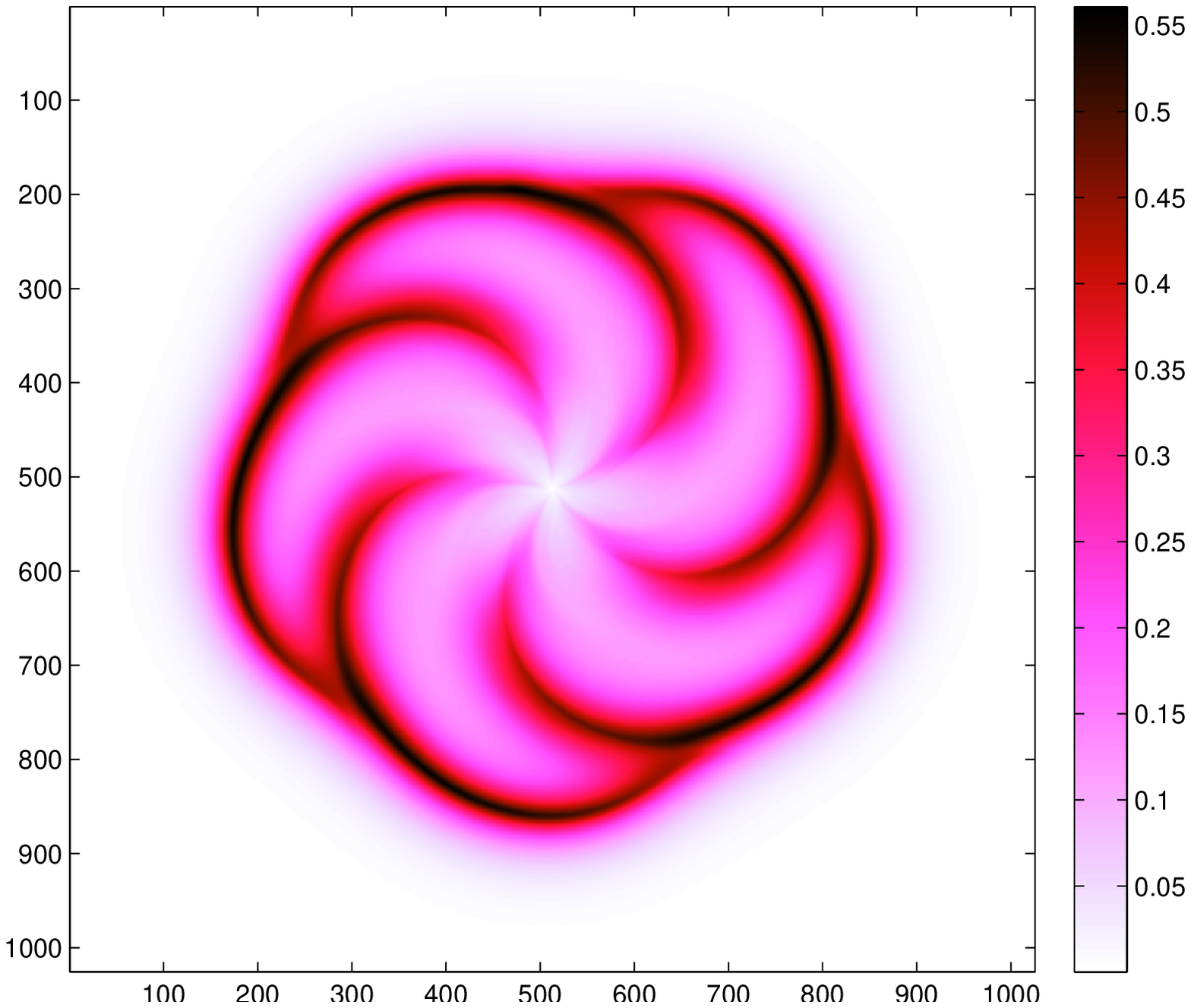}
                \caption{``Star'', $T=0.6$, $\alpha = \frac{\sigma}{2}$}
                \label{fig:star1_24_surf}
        \end{subfigure}
        \begin{subfigure}[b]{0.49\textwidth}
                \includegraphics[width=\textwidth]{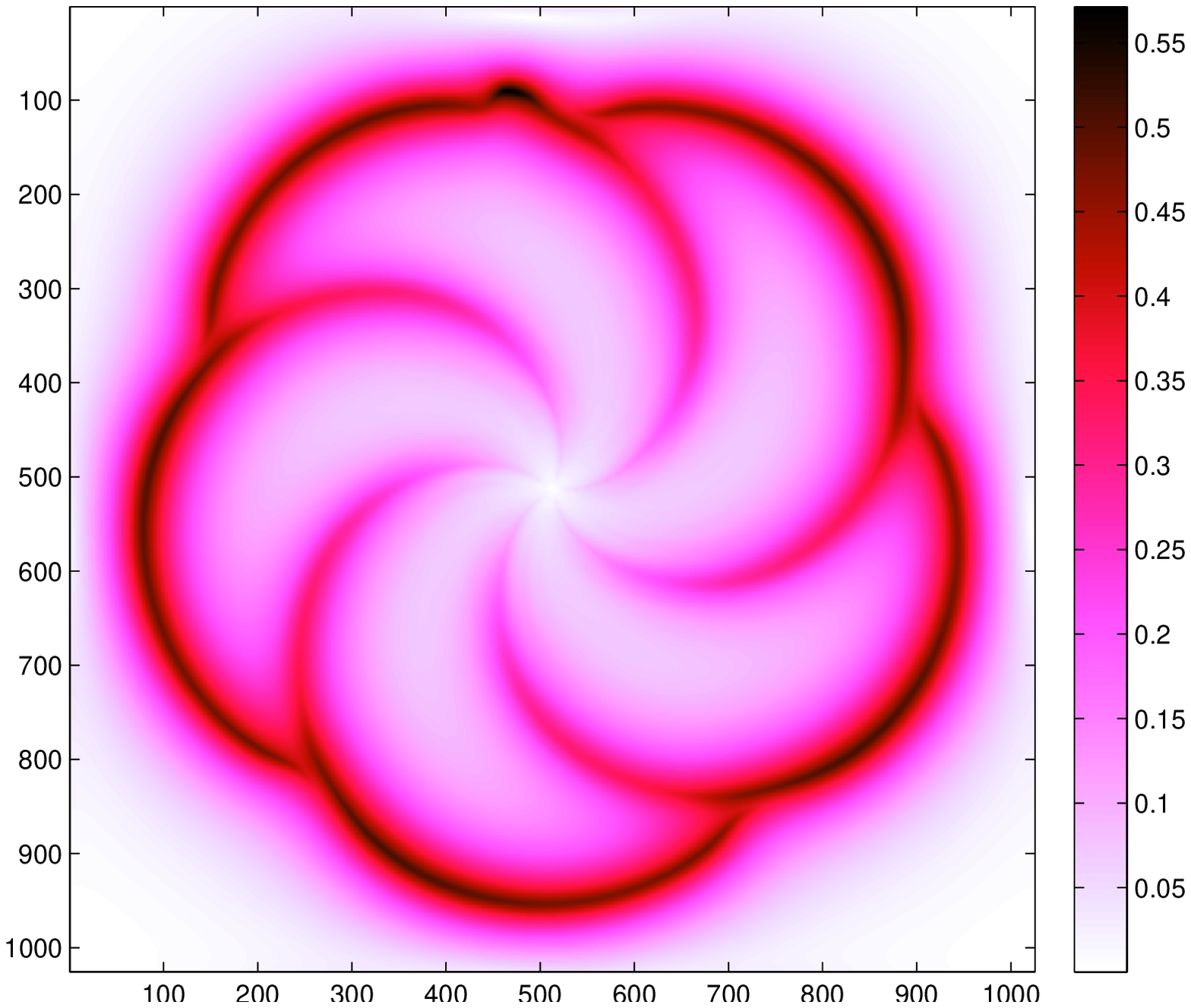}
                \caption{``Star'', $T=1$, $\alpha = \sigma$}
                \label{fig:star1_26}
        \end{subfigure}
        \begin{subfigure}[b]{0.49\textwidth}
                \includegraphics[width=\textwidth]{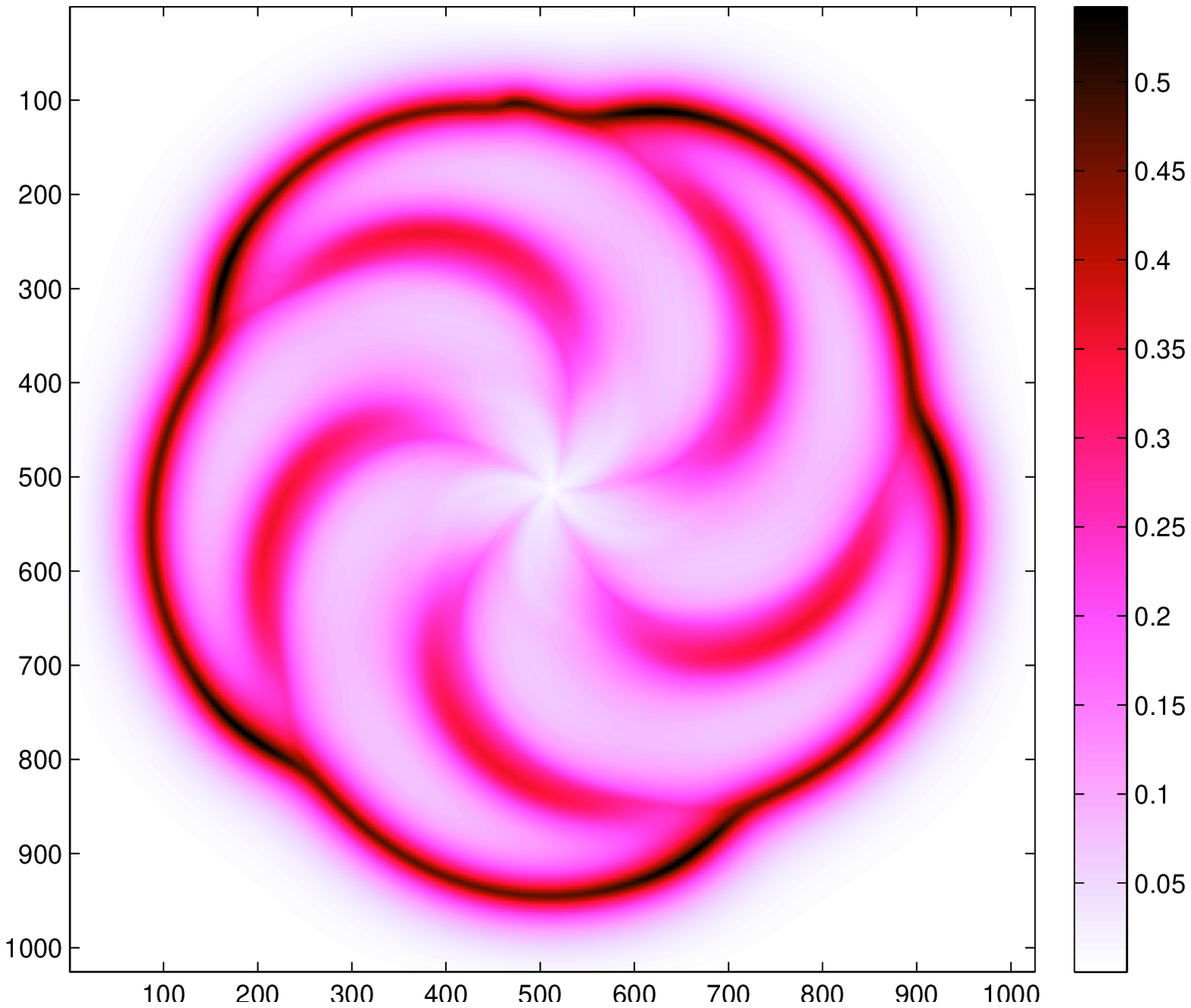}
                \caption{``Star'', $T=1$, $\alpha = \frac{\sigma}{2}$}
                \label{fig:star1_26_surf}
        \end{subfigure}
\caption{Evolution of ``Star'' on a grid $1025 \times 1025$, Scheme 
2}
\label{fig:ev_scheme2_alpha=sigma_star}
\end{figure}


\begin{figure}
        \centering
        \begin{subfigure}[b]{0.49\textwidth}
                \includegraphics[width=\textwidth]{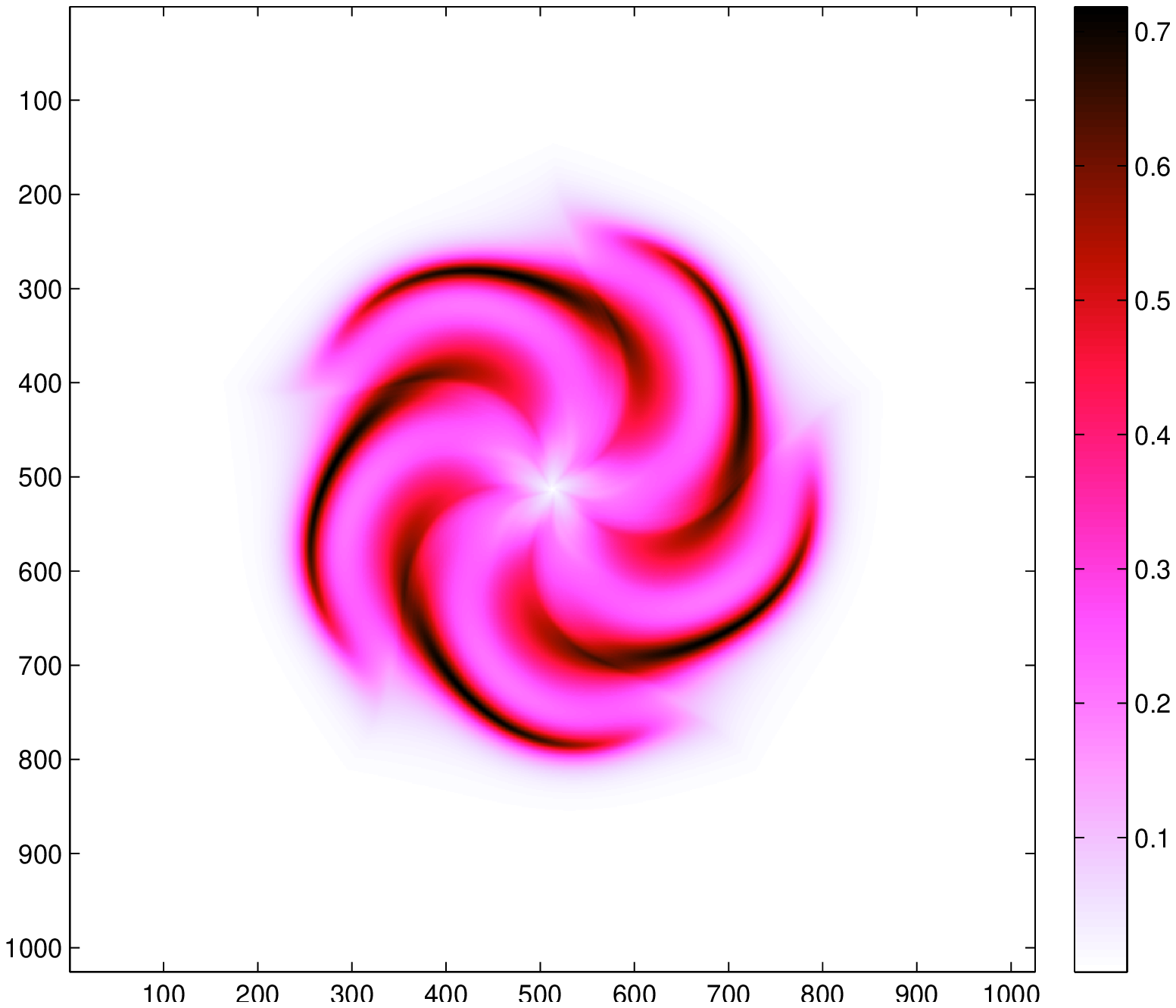}
                \caption{``Star'', $T=0.333$, $\alpha = \frac{\sigma}{4}$}
                \label{fig:star4_22}
        \end{subfigure}
        \begin{subfigure}[b]{0.49\textwidth}
                \includegraphics[width=\textwidth]{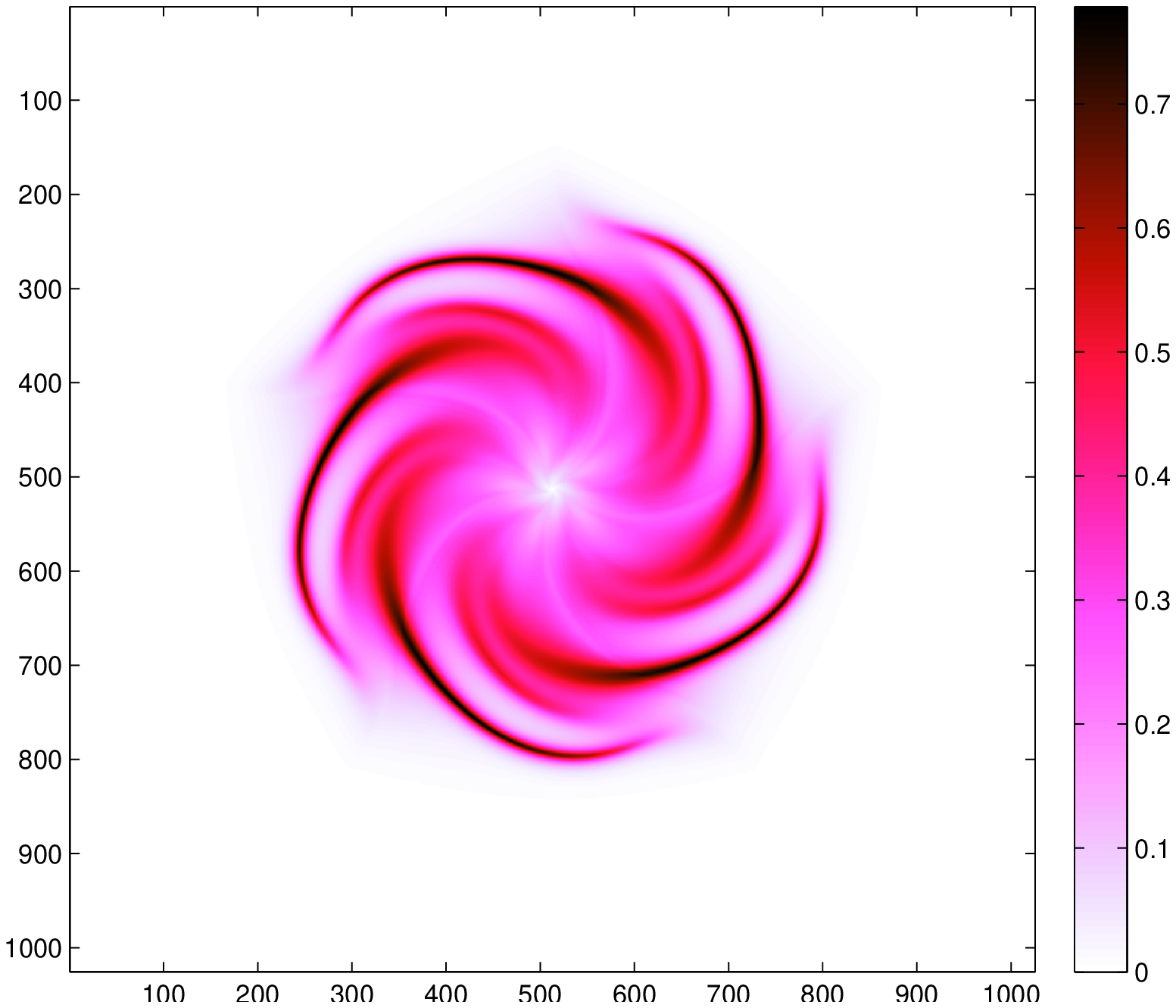}
                \caption{``Star'', $T=0.1583$, $\alpha = \frac{\sigma}{8}$}
                \label{fig:star4_22_surf}
        \end{subfigure}
        \begin{subfigure}[b]{0.49\textwidth}
                \includegraphics[width=\textwidth]{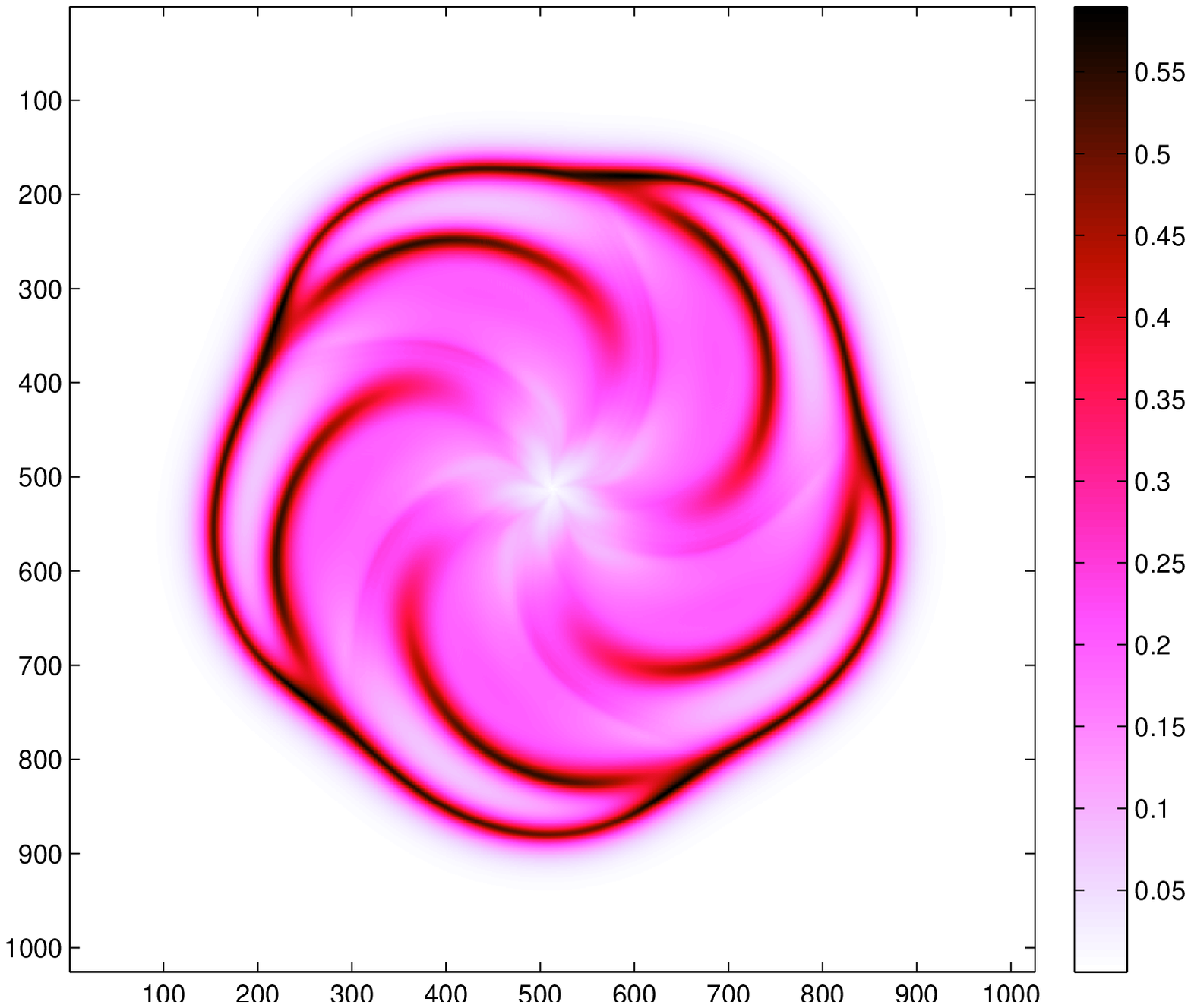}
                \caption{``Star'', $T=0.666$, $\alpha = \frac{\sigma}{4}$}
                \label{fig:star4_24}
        \end{subfigure}
        \begin{subfigure}[b]{0.49\textwidth}
                \includegraphics[width=\textwidth]{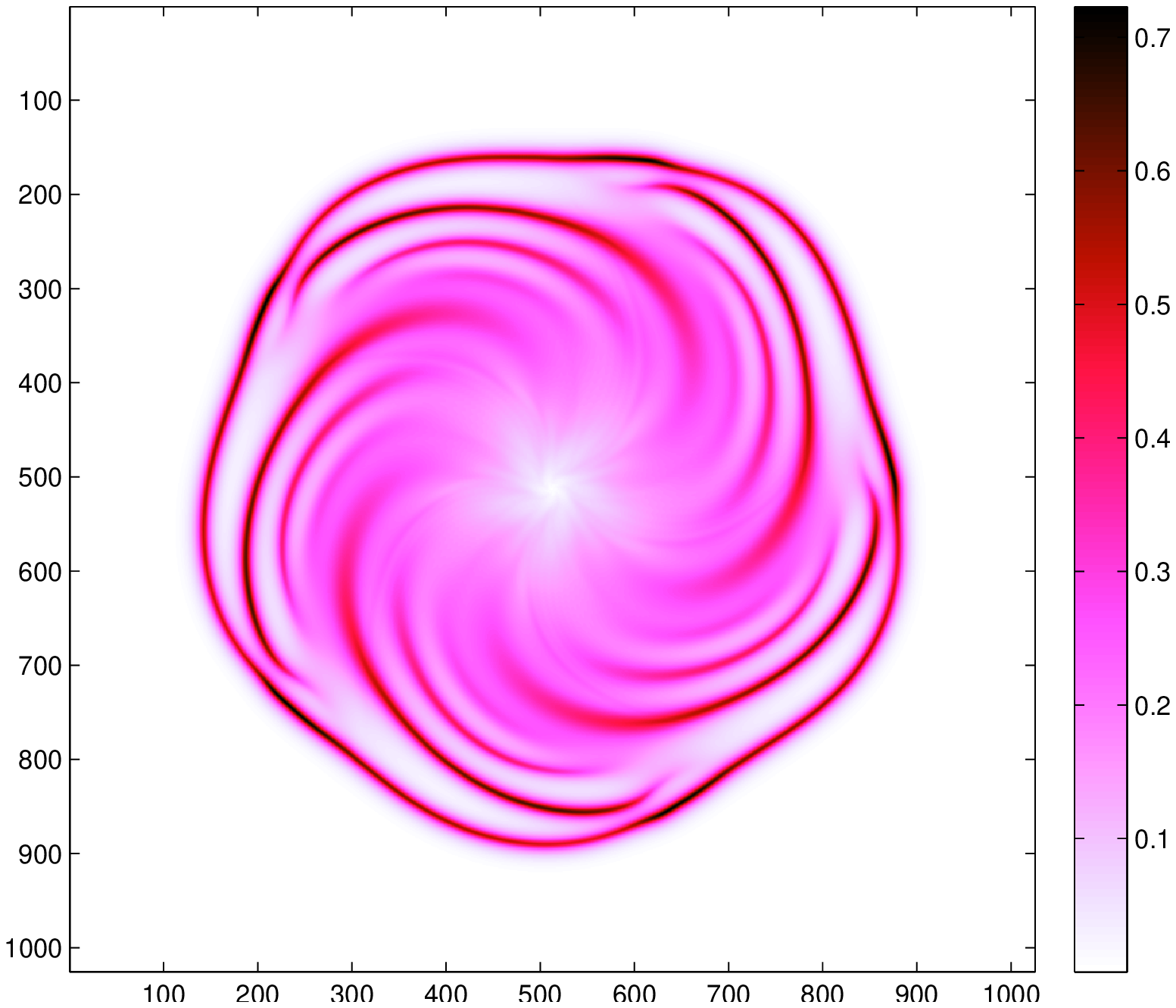}
                \caption{``Star'', $T=0.4750$, $\alpha = \frac{\sigma}{8}$}
                \label{fig:star4_24_surf}
        \end{subfigure}
        \begin{subfigure}[b]{0.49\textwidth}
                \includegraphics[width=\textwidth]{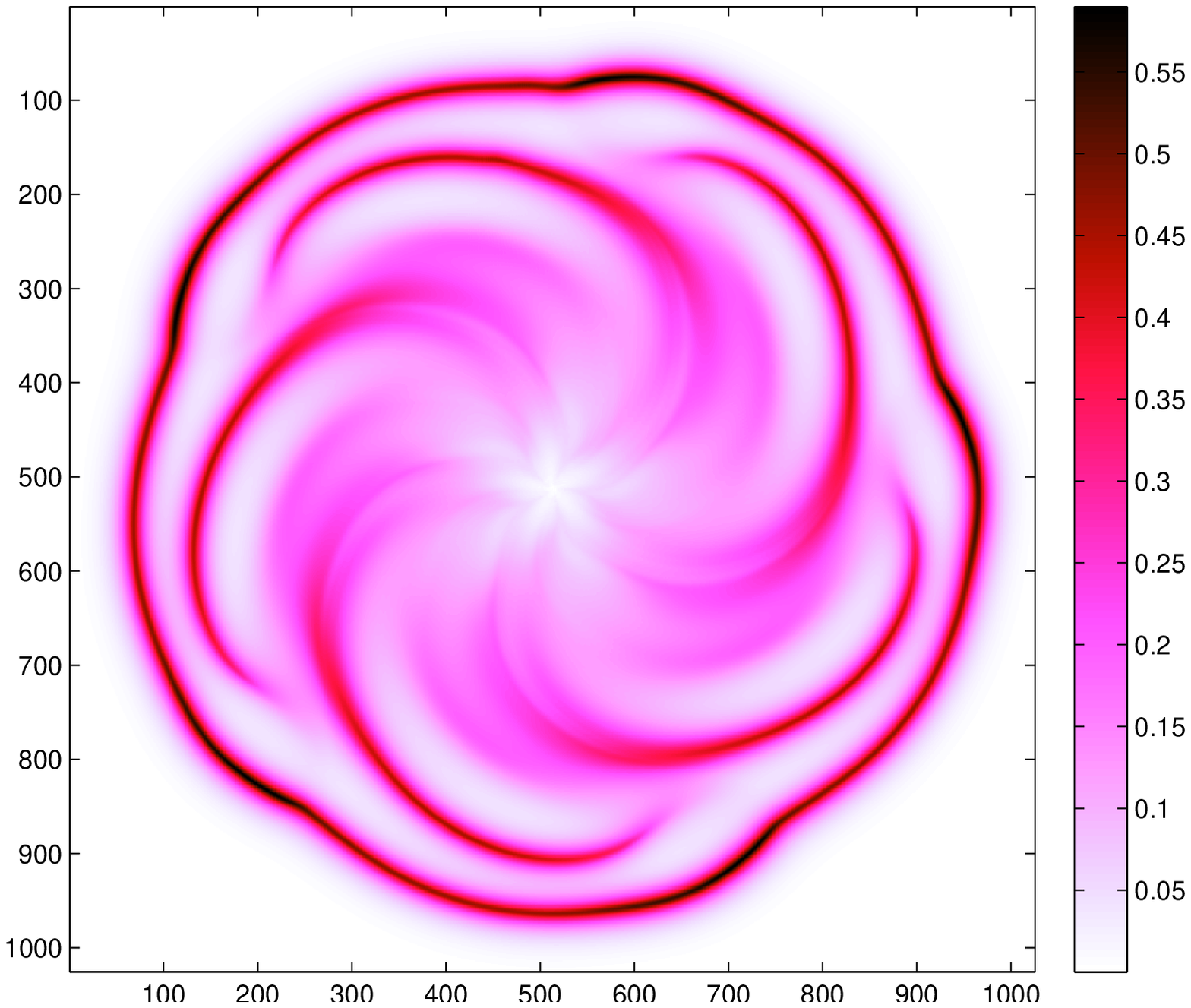}
                \caption{``Star'', $T=1$, $\alpha = \frac{\sigma}{4}$}
                \label{fig:star4_26}
        \end{subfigure}
        \begin{subfigure}[b]{0.49\textwidth}
                \includegraphics[width=\textwidth]{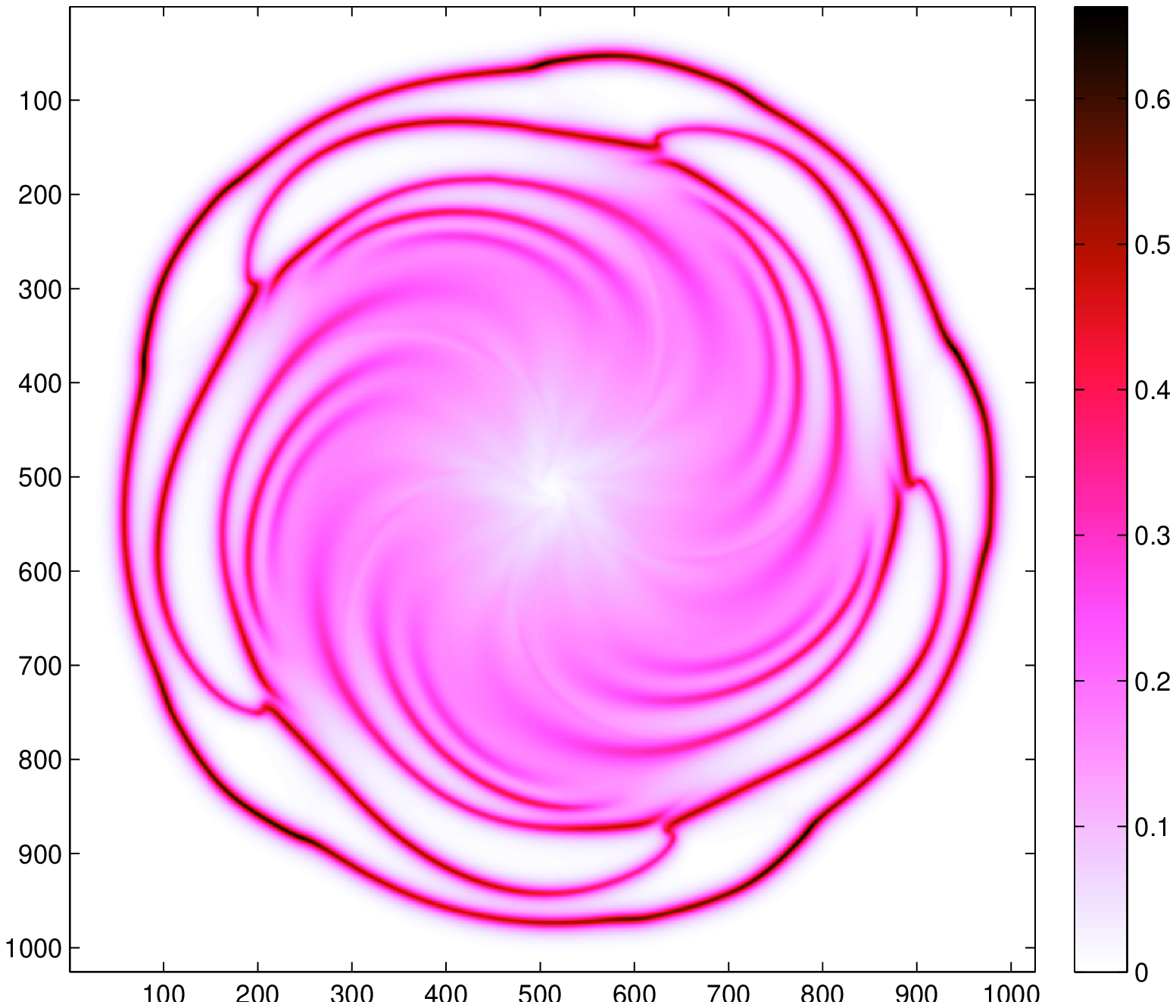}
                \caption{``Star'', $T=0.95$, $\alpha = \frac{\sigma}{8}$}
                \label{fig:star4_26_surf}
        \end{subfigure}
\caption{Evolution of ``Star'' on a grid $1025 \times 1025$, Scheme 
2}
\label{fig:ev_scheme2_alpha=sigma/4_star}
\end{figure}

\clearpage

\subsection{Empirical Convergence Analysis}

We test the rate of convergence for the schemes presented in this
manuscript. To simplify the study, we consider only the convergence rate for
the initial profile corresponding to the ``Plate'' case, when $\alpha=\sigma$.
The rate of convergence appears to be linear with respect to $\Delta t = 
\Delta x = \Delta y$ for all the schemes, up to some constant factor (see 
Figure~\ref{fig:convergence}). The apparent better rate of convergence for 
small values of $\Delta t$ is probably due to having a number of grid points 
too close to the ones used to compute the reference solution. This is 
seemingly suboptimal, if compared with \cite{Yuto}, where the rate of 
convergence is $\mathscr{O}(\Delta t ^2 + \Delta x ^ 2)$, but we have to keep 
in account the two following facts:
\begin{itemize}
 \item The equation investigated in \cite{Yuto} is the modified 
Camassa--Holm equation, that is, $m = (1-\partial_{xx}^2)^2$. This, as a 
geodesic equation, has a smoother metric, which could affect the rate of 
convergence of the method.
\item The assumption of regularity in \cite{Yuto} are $\mathscr{C}^7$ with 
respect to the spatial component and $\mathscr{C}^3$ with respect to 
the temporal component, while our convergence analysis is performed with almost 
singular initial profiles.  
\end{itemize}
In view of this, it appears reasonable having $\mathscr{O}(\Delta t + \Delta 
x )$ as rate of convergence for our test.
\begin{figure}
        \centering
                \includegraphics[width=\textwidth]{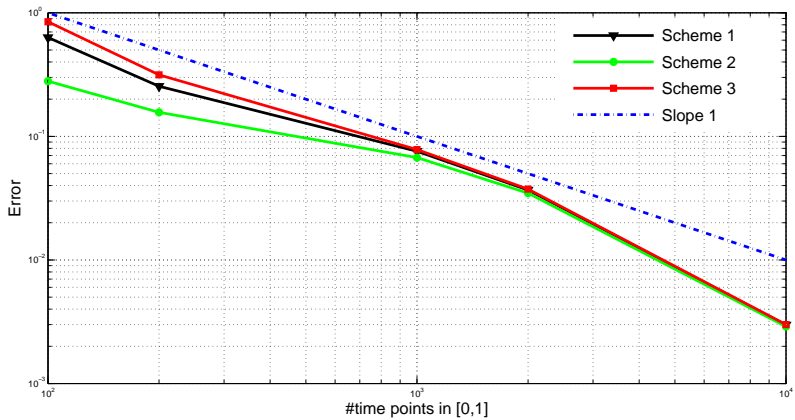}
                \caption{Empirical convergence analysis.}
                \label{fig:convergence}
\end{figure}

\subsection{Empirical Reversibility Analysis}

The results about reversibility that we present in this section are measured 
with 
respect to the $\norm{2}{\cdot}$-norm and with a discretization such that 
$\frac{\Delta t}{\Delta x} = \frac{1}{4} $.

In Table~\ref{tab:rev_scheme2} and~\ref{tab:rev_scheme3} we report the absolute
and the relative error, where the relative error is obtained by dividing the
absolute error by the norm of the initial profile.

We report also the results about reversibility for the first scheme implemented 
as a predictor-corrector with a fixed number of iterations (see 
Table~\ref{tab:rev_schemePC}). Although we
do not expect reversibility, we can notice that the results we obtain are not 
too far off from the ones in 
\ref{tab:rev_scheme2} and \ref{tab:rev_scheme3}, as long as the profiles we 
start with are ``simple'' and the ratio $\frac{\alpha}{\sigma}$ is not too 
small. However, when $\frac{\alpha}{\sigma}$ gets too small, as in the case 
$\frac{\alpha}{\sigma} = \frac18$, we can see that the scheme is no longer 
reversible. This problem can be avoided by simply implementing Scheme 1 with a 
variable number of iteration and a control over the relative error, as done in 
the previous section. We report in Table~\ref{tab:rev_scheme1_comparison} an 
example of comparison of performances for two different implementation of 
scheme number one. We want to stress that this takes in general much more time 
than what required by having the number of iteration fixed and equal to $5$. On 
average such an implementation required $23$ iterations of the corrector step, 
in order to reach the desired tolerance of $10^{-14}$. We refrain from reporting a 
complete table with the reversibility test for Scheme 1 with a variable number 
of corrector iterations. We limit ourselves to observe that in general, if 
$\Delta t$ is small enough (accordingly to the spatial discretization, the 
norm of the initial profile and the ratio $\frac{\alpha}{\sigma}$), the 
results about reversibility improve drastically. An example of how this happens 
is reported in Table~\ref{tab:rev_scheme2_comparison}, where the same 
experiment for different values of $\Delta t$.

\begin{table}[htbp]
\caption{Reversibility of Scheme 1, fixed number $(5)$ of corrections}
\label{tab:rev_schemePC} 
\begin{center}
  \begin{tabular}{ | l || c | c | c | c | }
    \hline
Grid:	$200 \times 200$  & $\alpha=\sigma$ & $\alpha=\frac{\sigma}{2}$ &
$\alpha=\frac{\sigma}{4}$ & $\alpha=\frac{\sigma}{8}$ \\ \hline
    Plate ($\%$)
	  & $0.0027$ & $0.0231$ & $0.3751$ & $6.3410$ \\
\hline
    Parallel ($\%$) 
	  & $0.0249$ & $0.1746$ & $3.0949$ & $44.5986$ \\
\hline
    Star  ($\%$) 
	  & $0.0032$ & $0.0035$ & $0.0242$ & $0.4076$ \\
\hline
  \end{tabular}
\end{center}
\end{table}

\begin{table}[htbp]
\caption{Reversibility of Scheme 2}
\label{tab:rev_scheme2} 
\begin{center}
  \begin{tabular}{ | l || c | c | c | c | }
    \hline
Grid:	$200 \times 200$  & $\alpha=\sigma$ & $\alpha=\frac{\sigma}{2}$ &
$\alpha=\frac{\sigma}{4}$ & $\alpha=\frac{\sigma}{8}$ \\ \hline
Plate ($\%$) 
    & $0.0080$ & $0.0062$ & $0.0233$ & $0.8971$ \\
\hline
Parallel ($\%$)
    & $0.0559$ & $0.0367$ & $0.1131$ & $0.7554$ \\
\hline
Star ($\%$)
    & $0.0066$ & $0.0090$ & $0.0164$ & $0.0956$ \\
\hline
  \end{tabular}
\end{center}
\end{table}

\begin{table}[htbp]
\caption{Reversibility of Scheme 3}
\label{tab:rev_scheme3} 
\begin{center}
  \begin{tabular}{ | l || c | c | c | c | }
    \hline
Grid:	$200 \times 200$  & $\alpha=\sigma$ & $\alpha=\frac{\sigma}{2}$ &
$\alpha=\frac{\sigma}{4}$ & $\alpha=\frac{\sigma}{8}$ \\ \hline
Plate ($\%$)
	  & $0.0058$ & $0.0063$ & $0.0185$ & $2.1017$ \\
\hline
Parallel ($\%$)
	  & $0.0604$ & $0.0320$ & $0.0795$ & $0.2771$ \\
\hline
Star ($\%$)
	  & $0.0096$ & $0.0111$ & $0.0209$ & $0.0623$ \\
\hline
  \end{tabular}
\end{center}
\end{table}

\begin{table}[htbp]
\caption{Reversibility of Scheme 1: different performances}
\label{tab:rev_scheme1_comparison} 
\begin{center}
  \begin{tabular}{ | l || c | c | }
    \hline
$\alpha = \frac{\sigma}{8}$ & Scheme 1 & Scheme 1\footnotemark[1] \\
\hline
Parallel ($\%$) & $0.0602$ & $44.5986$ \\
\hline
  \end{tabular}
\end{center}
\end{table}
\footnotetext[1]{With a fixed number of $5$ corrector iterations.}

\begin{table}[htbp]
\caption{Reversibility of Scheme 2: different values of $\frac{\Delta 
t}{\Delta x}$}
\label{tab:rev_scheme2_comparison} 
\begin{center}
  \begin{tabular}{ | l || c | c | }
    \hline
$\alpha = \frac{\sigma}{8}$ & $\frac{\Delta t}{\Delta x} = \frac14$ & 
$\frac{\Delta t}{\Delta x} = \frac{1}{16}$ \\
\hline
Plate ($\%$) &  $0.8971$  & $0.0021$   \\
\hline
  \end{tabular}
\end{center}
\end{table}

\subsection{Performance Analysis}
In Table~\ref{tab:cost_per_it} we report the average cost per iteration of each
of the schemes presented so far. It is clear that, although all the schemes 
have a cost per time step that grows linearly with the dimension of the 
system to solve, Scheme 2 is faster than the other schemes by approximately a factor~$10$. 
Scheme 3 and Scheme 1 appear similar in terms of performance for 
coarse grids, but for finer grids we see that Scheme 1 
(with 3 corrections at each step) is approximately twice as fast as Scheme 3.

\begin{table}[htbp]
\caption{Average cost per iteration (seconds) on different grids}
\label{tab:cost_per_it} 
\begin{center}
  \begin{tabular}{ | l || c | c | c | c | c | c | c | c | c | c | }
    \hline
    Grid-size & $100^2$ & $200^2$ & $300^2$ & $400^2$ & $500^2$ 
	  & $600^2$ & $700^2$ & $800^2$ & $900^2$ & $1000^2$  \\
\hline\hline
Scheme 1\footnotemark[2]     & $0.17$  & $0.80$ & $1.77$ & $3.45$ & $5.65$
	  & $8.50$  & $12.3$ & $16.2$ & $20.8$ & $25.1$    \\
\hline
Scheme 2  	  & $0.04$ & $0.18$ & $0.43$ & $0.85$ & $1.47$
	  & $2.11$ & $3.10$ & $4.18$ & $5.56$ & $6.48$ \\
\hline
Scheme 3  & $0.19$ & $0.95$ & $2.55$ & $5.15$ & $8.73$
	  & $14.5$ & $20.5$ & $30.5$ & $44$ & $60$   \\
\hline
  \end{tabular}
\end{center}
\end{table}
\footnotetext[2]{With a fixed number of $3$ corrector iterations.}

\section{Conclusions}
In this manuscript we have developed a multidimensional version of three
different integrators originally meant to solve the Camassa--Holm equation, and
now adapted to integrate the EPDiff equation in an arbitrary number of
dimension. We proved that our schemes admit a unique solution, preserve the 
numerical energy 
of the equation, and that two of them also preserve the momenta. The 
theoretical results, together with an analysis of the reversibility, are also 
verified empirically for a wide selection of benchmark problems. 

Our study reveals that Scheme~2 is a likely method-of-choice, since it produces results as 
accurate as the other two schemes at a cost per iteration which is a tenth of 
that of Scheme~3 and a fifth of that of Scheme~1 implemented in a predictor-corrector routine with fixed number 
of iterations, and since it possesses both the property of being revertible and
the property of conserving both the energy and the momenta. However, the better 
stability of Scheme~3 and the conservation of the ``real numerical energy'' of 
Scheme~1 suggest that these two schemes are not out of the game, and 
might be worth considering depending on the applications under consideration.

\appendix

\section{Omitted Proofs}\label{sec:om_proofs}
\begin{tiny}
\subsection{Proof of 
Lemma~\ref{lemma:first_equality_scheme1}} 
\label{sec:om_proofs_first_equality_scheme1}
\begin{proof}
\begin{align*}
&\frac{1}{\Delta t}\Big( 
\sum_{j=0}^{\cJ-1}\sum_{k=0}^{\cK-1}H_{k,j}^{(n+1)}\Delta x 
\Delta y - \sum_{j=0}^{\cJ-1}\sum_{k=0}^{\cK-1}H_{k,j}^{(n)}\Delta x 
\Delta y\Big) \\
&=\frac{1}{2\Delta t}\Big( \sum_{j=0}^{\cJ-1}\sum_{k=0}^{\cK-1}  
M_{1;k,j}^{(n+1)} 
U_{1;k,j}^{(n+1)} -  M_{1;k,j}^{(n)} U_{1;k,j}^{(n)} \\
&\qquad \qquad + M_{2;k,j}^{(n+1)} 
U_{2;k,j}^{(n+1)} - M_{2;k,j}^{(n)} 
U_{2;k,j}^{(n)} \Big)\Delta x 
\Delta y\\
&=\frac{1}{4\Delta t}\Big( \sum_{j=0}^{\cJ-1}\sum_{k=0}^{\cK-1}  
M_{1;k,j}^{(n+1)} 
U_{1;k,j}^{(n+1)} + M_{1;k,j}^{(n)}U_{1;k,j}^{(n+1)} 
- M_{1;k,j}^{(n+1)}U_{1;k,j}^{(n)} - M_{1;k,j}^{(n)}U_{1;k,j}^{(n)}\\
&\qquad \qquad   + M_{1;k,j}^{(n+1)} 
U_{1;k,j}^{(n+1)} - M_{1;k,j}^{(n)}U_{1;k,j}^{(n+1)} 
+ M_{1;k,j}^{(n+1)}U_{1;k,j}^{(n)} - M_{1;k,j}^{(n)}U_{1;k,j}^{(n)}\\
&\qquad \qquad  + M_{2;k,j}^{(n+1)} 
U_{2;k,j}^{(n+1)} + M_{2;k,j}^{(n)}U_{2;k,j}^{(n+1)} 
- M_{2;k,j}^{(n+1)}U_{2;k,j}^{(n)} - M_{2;k,j}^{(n)}U_{2;k,j}^{(n)}\\
&\qquad \qquad   + M_{2;k,j}^{(n+1)} 
U_{2;k,j}^{(n+1)} - M_{2;k,j}^{(n)}U_{2;k,j}^{(n+1)} 
+ M_{2;k,j}^{(n+1)}U_{2;k,j}^{(n)} - M_{2;k,j}^{(n)}U_{1;k,j}^{(n)}\Big)\Delta 
x 
\Delta y.
\end{align*}
We can now factorize and get
\begin{align*}
&\frac{1}{4\Delta t}\Big( \sum_{j=0}^{\cJ-1}\sum_{k=0}^{\cK-1}  
(M_{1;k,j}^{(n+1)} 
+ M_{1;k,j}^{(n)})U_{1;k,j}^{(n+1)} 
- (M_{1;k,j}^{(n+1)} + M_{1;k,j}^{(n)})U_{1;k,j}^{(n)}\\
&\qquad \qquad   + (M_{1;k,j}^{(n+1)} - M_{1;k,j}^{(n)})U_{1;k,j}^{(n+1)} 
+ (M_{1;k,j}^{(n+1)} - M_{1;k,j}^{(n)}) U_{1;k,j}^{(n)}\\
&\qquad \qquad  M_{2;k,j}^{(n+1)} 
+ M_{2;k,j}^{(n)})U_{2;k,j}^{(n+1)} 
- (M_{2;k,j}^{(n+1)} + M_{2;k,j}^{(n)})U_{2;k,j}^{(n)}\\
&\qquad \qquad   + (M_{2;k,j}^{(n+1)} - M_{2;k,j}^{(n)})U_{2;k,j}^{(n+1)} 
+ (M_{2;k,j}^{(n+1)} - M_{2;k,j}^{(n)}) U_{2;k,j}^{(n)} \Big)\Delta x 
\Delta y,
\end{align*}
that is,
\begin{align*}
&\frac{1}{2\Delta t}\Big( \sum_{j=0}^{\cJ-1}\sum_{k=0}^{\cK-1}  
\frac{(M_{1;k,j}^{(n+1)} 
+ M_{1;k,j}^{(n)})}{2} (U_{1;k,j}^{(n+1)} - U_{1;k,j}^{(n)})\\
&\qquad \qquad \qquad  + (M_{1;k,j}^{(n+1)} - M_{1;k,j}^{(n)}) 
\frac{( U_{1;k,j}^{(n+1)} 
+  U_{1;k,j}^{(n)})}{2}\\
&\qquad \qquad \qquad\frac{(M_{2;k,j}^{(n+1)} 
+ M_{2;k,j}^{(n)})}{2} (U_{2;k,j}^{(n+1)} - U_{2;k,j}^{(n)})\\
&\qquad \qquad \qquad  + (M_{2;k,j}^{(n+1)} - M_{2;k,j}^{(n)}) 
\frac{( U_{2;k,j}^{(n+1)} 
+  U_{2;k,j}^{(n)})}{2} \Big)\Delta x 
\Delta y,\\
&= \frac{1}{2\Delta t}\Big( \sum_{j=0}^{\cJ-1}\sum_{k=0}^{\cK-1}  
M_{1;k,j}^{(n+\frac12)}(U_{1;k,j}^{(n+1)} - U_{1;k,j}^{(n)}\Big)\Delta x 
\Delta y\\
&+ \frac{1}{2\Delta t}\Big( 
\sum_{j=0}^{\cJ-1}\sum_{k=0}^{\cK-1}  
U_{1;k,j}^{(n+\frac12)}(M_{1;k,j}^{(n+1)} - M_{1;k,j}^{(n)}\Big)\Delta x 
\Delta y\\
&+ \frac{1}{2\Delta t}\Big( \sum_{j=0}^{\cJ-1}\sum_{k=0}^{\cK-1}  
M_{2;k,j}^{(n+\frac12)}(U_{2;k,j}^{(n+1)} - U_{2;k,j}^{(n)}\Big)\Delta x 
\Delta y\\
&+ \frac{1}{2\Delta t}\Big( 
\sum_{j=0}^{\cJ-1}\sum_{k=0}^{\cK-1}  
U_{2;k,j}^{(n+\frac12)}(M_{2;k,j}^{(n+1)} - M_{2;k,j}^{(n)}\Big)\Delta x 
\Delta y.
\end{align*}
We use now the fact that, component-wise $ M_{\cdot,jk}^{(\cdot)} = 
(1 - \alpha^2\delta^{<2>}_{kk} -\alpha^2 \delta^{<2>}_{jj} 
)U_{\cdot,jk}^{(\cdot)}  $ and, 
analogously to what happens in the one dimensional case, it holds that
\begin{align*}
\sum_{k=0}^{\cK-1} f_k (\delta^{<2>}_{kk} g_k) \Delta x &= - 
\sum_{k=0}^{\cK-1}\frac{ (\delta^{+}_k f_k) (\delta^{+}_k g_k) + (\delta^{-}_k 
g_k)(\delta^{-}_k g_k)}{2}\Delta x \\ &= \sum_{k=0}^{\cK-1} g_k 
(\delta^{<2>}_{kk} 
f_k) \Delta x. 
\end{align*}
The last expression becomes therefore 
\begin{align*}
&\frac{1}{2\Delta t}\Big( \sum_{j=0}^{\cJ-1}\sum_{k=0}^{\cK-1} 
U_{1;k,j}^{(n+\frac12)}(M_{1;k,j}^{(n+1)} - M_{1;k,j}^{(n)})\Delta x 
\Delta y\\
&\quad \, \, \, +
\sum_{j=0}^{\cJ-1}\sum_{k=0}^{\cK-1}  
U_{1;k,j}^{(n+\frac12)}(M_{1;k,j}^{(n+1)} - M_{1;k,j}^{(n)})\Delta x 
\Delta y\\
&\quad \, \, \,+ \sum_{j=0}^{\cJ-1}\sum_{k=0}^{\cK-1}  
U_{2;k,j}^{(n+\frac12)}(M_{2;k,j}^{(n+1)} - M_{2;k,j}^{(n)})\Delta x 
\Delta y\\
&\quad \, \, \,+ 
\sum_{j=0}^{\cJ-1}\sum_{k=0}^{\cK-1}  
U_{2;k,j}^{(n+\frac12)}(M_{2;k,j}^{(n+1)} - M_{2;k,j}^{(n)})\Delta x 
\Delta y\Big),
\end{align*}
that reduces to
\begin{align*}
 \sum_{j=0}^{\cJ-1}\sum_{k=0}^{\cK-1} \Big(
U_{1;k,j}^{(n+\frac12)} \frac{M_{1;k,j}^{(n+1)} - M_{1;k,j}^{(n)}}{\Delta t} + 
U_{2;k,j}^{(n+\frac12)}\frac{M_{2;k,j}^{(n+1)} - M_{2;k,j}^{(n)}}{\Delta 
t}\Big)\Delta x 
\Delta y.
\end{align*}
\end{proof}

\subsection{Proof of 
Lemma~\ref{lemma:first_equality_scheme2}} 
\label{sec:om_proofs_first_equality_scheme2}
\begin{proof}
\begin{align*}
&\sum_{j=0}^{\cJ-1}\sum_{k=0}^{\cK-1}H_{k,j}^{(n +\frac12 )}\Delta x 
\Delta y - \sum_{j=0}^{\cJ-1}\sum_{k=0}^{\cK-1}H_{k,j}^{(n- \frac12 )}\Delta x 
\Delta y \\
&=\frac14 \sum_{j=0}^{\cJ-1}\sum_{k=0}^{\cK-1}\Big(  
M_{1;k,j}^{(n+1)} U_{1;k,j}^{(n)} 
+ M_{1;k,j}^{(n)} U_{1;k,j}^{(n+1)} 
+ M_{2;k,j}^{(n+1)} U_{2;k,j}^{(n)}
+ M_{2;k,j}^{(n)} U_{2;k,j}^{(n+1)}\\
&\qquad \qquad -  M_{1;k,j}^{(n)} U_{1;k,j}^{(n-1)}
- M_{1;k,j}^{(n-1)} U_{1;k,j}^{(n)}
- M_{2;k,j}^{(n)} U_{2;k,j}^{(n-1)}
- M_{2;k,j}^{(n-1)} U_{2;k,j}^{(n)} \Big)   \Delta x  
\Delta y.
\end{align*}
We factor out the terms involving $U_{2;k,j}^{(n)}$, thus getting
\begin{align*}
&\frac12 \sum_{j=0}^{\cJ-1}\sum_{k=0}^{\cK-1}\Big(  
\frac{M_{1;k,j}^{(n+1)} - M_{1;k,j}^{(n-1)}}{2} U_{1;k,j}^{(n)}
+ \frac{M_{2;k,j}^{(n+1)} - M_{2;k,j}^{(n-1)}}{2} U_{2;k,j}^{(n)} \\
&\qquad \qquad+ \frac{U_{1;k,j}^{(n+1)}-U_{1;k,j}^{(n-1)}}{2} M_{1;k,j}^{(n)}
+ \frac{U_{2;k,j}^{(n+1)} - U_{2;k,j}^{(n-1)}}{2} M_{2;k,j}^{(n)}
\Big)   \Delta x  
\Delta y.
\end{align*}
We can now use in the last line the self-adjointness of $ ( 1 - \alpha^2
\delta^{<2>}_{jj} 
-  \alpha^2\delta^{<2>}_{kk})$ in a similar way to what done for the previous 
scheme, thus 
getting:
\begin{align*}
& \sum_{j=0}^{\cJ-1}\sum_{k=0}^{\cK-1}\Big(  
\frac{M_{1;k,j}^{(n+1)} - M_{1;k,j}^{(n-1)}}{2} U_{1;k,j}^{(n)}
+ \frac{M_{2;k,j}^{(n+1)} - M_{2;k,j}^{(n-1)}}{2} U_{2;k,j}^{(n)}
\Big)   \Delta x  
\Delta y.
\end{align*}
\end{proof}
\subsection{Proof of 
Lemma~\ref{lemma:first_equality_scheme3}} 
\label{sec:om_proofs_first_equality_scheme3}
\begin{proof}
\begin{align*}
&\sum_{j=0}^{\cJ-1}\sum_{k=0}^{\cK-1}H_{k,j}^{(n +\frac12 )}\Delta x 
\Delta y - \sum_{j=0}^{\cJ-1}\sum_{k=0}^{\cK-1}H_{k,j}^{(n- \frac12 )}\Delta x 
\Delta y \\
&=\frac14 \sum_{j=0}^{\cJ-1}\sum_{k=0}^{\cK-1}\Big(  
M_{1;k,j}^{(n+1)} U_{1;k,j}^{(n+1)} 
+ M_{1;k,j}^{(n)} U_{1;k,j}^{(n)} 
+ M_{2;k,j}^{(n+1)} U_{2;k,j}^{(n+1)}
+ M_{2;k,j}^{(n)} U_{2;k,j}^{(n)}\\
&\qquad \qquad -  M_{1;k,j}^{(n)} U_{1;k,j}^{(n)}
- M_{1;k,j}^{(n-1)} U_{1;k,j}^{(n-1)}
- M_{2;k,j}^{(n)} U_{2;k,j}^{(n)}
- M_{2;k,j}^{(n-1)} U_{2;k,j}^{(n-1)} \Big)   \Delta x  
\Delta y,
\end{align*}
which becomes
\begin{align*}
&\sum_{j=0}^{\cJ-1}\sum_{k=0}^{\cK-1}H_{k,j}^{(n +\frac12 )}\Delta x 
\Delta y - \sum_{j=0}^{\cJ-1}\sum_{k=0}^{\cK-1}H_{k,j}^{(n- \frac12 )}\Delta x 
\Delta y \\
&=\frac14 \sum_{j=0}^{\cJ-1}\sum_{k=0}^{\cK-1}\Big(  
M_{1;k,j}^{(n+1)} U_{1;k,j}^{(n+1)} 
+ M_{2;k,j}^{(n+1)} U_{2;k,j}^{(n+1)} \\
&\qquad \qquad \qquad - M_{1;k,j}^{(n-1)} U_{1;k,j}^{(n-1)}
- M_{2;k,j}^{(n-1)} U_{2;k,j}^{(n-1)} \Big)   \Delta x  
\Delta y.
\end{align*}
We add and subtract $ M_{1;k,j}^{(n-1)} U_{1;k,j}^{(n+1)}  + M_{2;k,j}^{(n-1)} 
U_{2;k,j}^{(n+1)}  $ so that we factorize the expression, getting:
\begin{align*}
&\frac12 \sum_{j=0}^{\cJ-1}\sum_{k=0}^{\cK-1}\Big(  
\frac{M_{1;k,j}^{(n+1)} - M_{1;k,j}^{(n-1)}}{2} U_{1;k,j}^{(n+1)}
+ \frac{M_{2;k,j}^{(n+1)} - M_{2;k,j}^{(n-1)}}{2} U_{2;k,j}^{(n+1)} \\
&\qquad \qquad+ \frac{U_{1;k,j}^{(n+1)}-U_{1;k,j}^{(n-1)}}{2} M_{1;k,j}^{(n-1)}
+ \frac{U_{2;k,j}^{(n+1)} - U_{2;k,j}^{(n-1)}}{2} M_{2;k,j}^{(n-1)}
\Big)   \Delta x  
\Delta y.
\end{align*}
We can now use in the last line the self-adjointness of $ ( 1 - \alpha^2
\delta^{<2>}_{jj} 
- \alpha^2 \delta^{<2>}_{kk})$ in a similar way to what done for the previous 
scheme, thus 
getting:
\begin{align*}
& \sum_{j=0}^{\cJ-1}\sum_{k=0}^{\cK-1}\Big(  
\frac{M_{1;k,j}^{(n+1)} - M_{1;k,j}^{(n-1)}}{2} 
\frac{ U_{1;k,j}^{(n+1)} + U_{1;k,j}^{(n-1)} }{2}\\
&\qquad \qquad+ \frac{M_{2;k,j}^{(n+1)} - M_{2;k,j}^{(n-1)}}{2}
\frac{ U_{2;k,j}^{(n+1)} + U_{2;k,j}^{(n-1)} }{2}
\Big)   \Delta x  
\Delta y.
\end{align*}
\end{proof}
\subsection{Proof of 
Theorem~\ref{thm:solvability_scheme1}} 
\label{sec:om_proofs_solvability_scheme1}
\begin{proof}
We make use of the shorthand notation $(1 - \alpha^2\delta^{<2>}_{k,j})$ 
instead of $(1 - \alpha^2\delta^{<2>}_{kk} - \alpha^2\delta^{<2>}_{jj} )$.
The scheme, written only in terms of $M$, is given by
\begin{tiny}
\begin{align*}
M_{1;k,j}^{(n+1)}  = M^{(n)}_{1;k,j} - \Delta t &\Big( \frac{ M_{1;k,j}^{(n)} +
M_{1;k,j}^{(n+1)} }{2} \cdot \delta^{<1>}_k \frac{
(1 - \alpha^2\delta^{<2>}_{k,j})^{-1} M_{1;k,j}^{(n)} 
+ (1 - \alpha^2\delta^{<2>}_{k,j})^{-1} M_{1;k,j}^{(n+1)}}{2} 
\\ 
&+ \frac{ M_{2;k,j}^{(n)} +M_{2;k,j}^{(n+1)}}{2} \cdot \delta^{<1>}_k 
\frac{ (1 - \alpha^2\delta^{<2>}_{k,j})^{-1}M_{2;k,j}^{(n)} +
(1 -\alpha^2 \delta^{<2>}_{k,j})^{-1}M_{2;k,j}^{(n+1)}}{2} \\
&+ \delta^{<1>}_k ( \frac{ M_{1;k,j}^{(n)} +
M_{1;k,j}^{(n+1)}}{2}  \cdot \frac{ (1 - \alpha^2
\delta^{<2>}_{k,j})^{-1}M_{1;k,j}^{(n)} + 
(1 -\alpha^2 \delta^{<2>}_{k,j})^{-1}M_{1;k,j}^{(n+1)}}{2} ) \\
&+ \delta^{<1>}_j ( \frac{ M_{1;k,j}^{(n)} +
M_{1;k,j}^{(n+1)}}{2} \cdot \frac{ (1 - 
\alpha^2\delta^{<2>}_{k,j})^{-1}M_{2;k,j}^{(n)} 
+ 
(1 - \alpha^2\delta^{<2>}_{k,j})^{-1}M_{2;k,j}^{(n+1)}}{2} )
\Big),
\end{align*}
\end{tiny}
and
\begin{tiny}
\begin{align*}
M_{2;k,j}^{(n+1)} = M^{(n)}_{2;k,j} - \Delta t &\Big( \frac{ M_{1;k,j}^{(n)} +
M_{1;k,j}^{(n+1)} }{2} \cdot \delta^{<1>}_j \frac{
(1 - \alpha^2\delta^{<2>}_{k,j})^{-1} M_{1;k,j}^{(n)} 
+ (1 - \alpha^2\delta^{<2>}_{k,j})^{-1} M_{1;k,j}^{(n+1)}}{2} 
\\ 
&+ \frac{ M_{2;k,j}^{(n)} +M_{2;k,j}^{(n+1)}}{2} \cdot \delta^{<1>}_j 
\frac{ (1 - \alpha^2\delta^{<2>}_{k,j})^{-1}M_{2;k,j}^{(n)} +
(1 -\alpha^2 \delta^{<2>}_{k,j})^{-1}M_{2;k,j}^{(n+1)}}{2} \\
&+ \delta^{<1>}_k ( \frac{ M_{2;k,j}^{(n)} +
M_{2;k,j}^{(n+1)}}{2}  \cdot \frac{ (1 - \alpha^2
\delta^{<2>}_{k,j})^{-1}M_{1;k,j}^{(n)} + 
(1 - \alpha^2\delta^{<2>}_{k,j})^{-1}M_{1;k,j}^{(n+1)}}{2} ) \\
&+ \delta^{<1>}_j ( \frac{ M_{2;k,j}^{(n)} +
M_{2;k,j}^{(n+1)}}{2} \cdot \frac{ (1 -\alpha^2 
\delta^{<2>}_{k,j})^{-1}M_{2;k,j}^{(n)} 
+ 
(1 - \alpha^2\delta^{<2>}_{k,j})^{-1}M_{2;k,j}^{(n+1)}}{2} )
\Big).
\end{align*}
\end{tiny}
We introduce a function
\begin{align*}
\Phi_{\ba} : \bR^{2\times \cK \times \cJ} \rightarrow \bR^{2\times \cK 
\times \cJ},
\end{align*}
where $\ba$ and $\bv$ belong to $\bR^{2\times \cK \times \cJ}$.
The function is defined in terms of the operator defined in 
Section~\ref{section:tools} as follows:
\begin{tiny}
\begin{align*}
(\Phi_{\ba}(\bv))_{1}  &:= \ba_{1} -  \frac{\Delta t}{4} \Big\{ \Big(  
\ba_{1} + \bv_{1}\Big) \cdot 
\Big( D^{<1>}_{x}  Q^{-1} \ba_{1} +  D^{<1>}_{x} Q^{-1} \bv_{1} \Big) \\
&+ \Big( \ba_{2} + \bv_{2}\Big) \cdot 
\Big( D^{<1>}_{x}  Q^{-1} \ba_{2} + D^{<1>}_{x}  Q^{-1}  \bv_{2} \Big) \\
&+ D^{<1>}_{x} \Big[ \Big( 
\ba_{1} + \bv_{1} \Big)  \cdot 
\Big(  Q^{-1} \ba_{1} +  Q^{-1} \bv_{1} \Big) \Big] 
+ D^{<1>}_{y} \Big[ \Big( \ba_{1} + \bv_{1}\Big) \cdot 
\Big(  Q^{-1} \ba_{2} +  Q^{-1} \bv_{2} \Big) \Big] \Big\},
\end{align*}
\begin{align*}
(\Phi_{\ba}(\bv))_{2}  &:= \ba_{2} -  \frac{\Delta t}{4} \Big\{ \Big(  
\ba_{1} + \bv_{1}\Big) \cdot 
\Big( D^{<1>}_{y}  Q^{-1} \ba_{1} +  D^{<1>}_{y} Q^{-1}  \bv_{1} \Big) \\
&+ \Big( \ba_{2} + \bv_{2}\Big) \cdot 
\Big( D^{<1>}_{y}  Q^{-1} \ba_{2} + D^{<1>}_{y}  Q^{-1}  \bv_{2} \Big) \\
&+ D^{<1>}_{x} \Big[ \Big( 
\ba_{2} + \bv_{2} \Big)  \cdot 
\Big(  Q^{-1} \ba_{1} +  Q^{-1} \bv_{1} \Big) \Big] 
+ D^{<1>}_{y} \Big[ \Big( 
\ba_{2} + \bv_{2}\Big) \cdot 
\Big(  Q^{-1} \ba_{2} +  Q^{-1} \bv_{2} \Big) \Big] \Big\}.
\end{align*}
\end{tiny}

We want to show first that the map $(\Phi_{\ba}(\bv))$ goes from a certain set 
to itself, under suitable assumptions, and then show that the map is a 
contraction over that particular set. This would imply that there exists a 
unique fixed-point, that is to say, a unique solution to our scheme.

We define the set $\Omega_a := \{ \bv \in \bR^{2 \times \cJ \times \cK} \colon 
\norm{}{\bv} \leq \rho r_a  \}$, where $r_a := \norm{}{\ba}$ and where the 
norms are the graph-norms, whenever required from the context.

We take norms:
\begin{tiny}
\begin{align*}
\norm{}{  (\Phi_{\ba}(\bv))_{1}}  &:= \norm{}{ \ba_{1}} +  \frac{\Delta t}{4} 
\Big\{ \norm{}{ (  
\ba_{1} + \bv_{1}) \cdot 
( D^{<1>}_{x}  Q^{-1} \ba_{1} +  D^{<1>}_{x} Q^{-1} \bv_{1} ) } \\
&+ \norm{}{ ( \ba_{2} + \bv_{2}) \cdot 
( D^{<1>}_{x}  Q^{-1} \ba_{2} + D^{<1>}_{x}  Q^{-1}  \bv_{2} )} \\
&+ \norm{}{ D^{<1>}_{x}} \norm{}{  ( 
\ba_{1} + \bv_{1} )  \cdot 
(  Q^{-1} \ba_{1} +  Q^{-1} \bv_{1} ) }
+ \norm{}{ D^{<1>}_{y}} \norm{}{  ( \ba_{1} + \bv_{1} ) \cdot 
(  Q^{-1} \ba_{2} +  Q^{-1} \bv_{2} ) }\Big\},
\end{align*}
\begin{align*}
\norm{}{ (\Phi_{\ba}(\bv))_{2}}  &:= \norm{}{ \ba_{2}} +  \frac{\Delta t}{4} 
\Big\{ \norm{}{  \Big(  
\ba_{1} + \bv_{1}\Big) \cdot 
\Big( D^{<1>}_{y}  Q^{-1} \ba_{1} +  D^{<1>}_{y} Q^{-1}  \bv_{1} \Big) }\\
&+ \norm{}{ \Big( \ba_{2} + \bv_{2}\Big) \cdot 
\Big( D^{<1>}_{y}  Q^{-1} \ba_{2} + D^{<1>}_{y}  Q^{-1}  \bv_{2} \Big) } \\
&+ \norm{}{ D^{<1>}_{x} } \norm{}{  \Big( 
\ba_{2} + \bv_{2} \Big)  \cdot 
\Big(  Q^{-1} \ba_{1} +  Q^{-1} \bv_{1} \Big) }
+ \norm{}{ D^{<1>}_{y} } \norm{}{  \Big( 
\ba_{2} + \bv_{2}\Big) \cdot 
\Big(  Q^{-1} \ba_{2} +  Q^{-1} \bv_{2} \Big)} \Big\}.
\end{align*}
\end{tiny}
We use elementary inequalities:
\begin{tiny}
\begin{align*}
\norm{}{  (\Phi_{\ba}(\bv))_{1}}  &\leq \norm{}{ \ba_{1}} +  \frac{\Delta 
t}{4}\frac{1}{\Delta x \sqrt{\Delta x \Delta y}} 
\Big[ 2\norm{}{ \ba_{1} + \bv_{1} }^2
+ \norm{}{ \ba_{2} + \bv_{2}}^2 \Big] + \frac{\Delta 
t}{4}\frac{1}{\Delta y \sqrt{\Delta x \Delta y}} 
 \norm{}{ \ba_{1} + \bv_{1}} \norm{}{  \ba_{2} + \bv_{2} }, \\
\norm{}{  (\Phi_{\ba}(\bv))_{2}}  &\leq \norm{}{ \ba_{2}} +  \frac{\Delta 
t}{4}\frac{1}{\Delta y \sqrt{\Delta x \Delta y}} 
\Big[ \norm{}{ \ba_{1} + \bv_{1} }^2
+ 2\norm{}{ \ba_{2} + \bv_{2}}^2 \Big] + \frac{\Delta 
t}{4}\frac{1}{\Delta x \sqrt{\Delta x \Delta y}} 
 \norm{}{ \ba_{1} + \bv_{1}} \norm{}{  \ba_{2} + \bv_{2} }.
\end{align*}
\end{tiny}
We make us of the following auxiliary notation:
\begin{align*}
C_x := \frac{\Delta 
t}{4}\frac{1}{\Delta x \sqrt{\Delta x \Delta y}}, \quad C_y := \frac{\Delta 
t}{4}\frac{1}{\Delta y \sqrt{\Delta x \Delta y}},
\end{align*}
and of the following auxiliary inequalities:
\begin{align*}
\norm{}{\ba_{1} + \bv_{1}}\norm{}{\ba_{2} + \bv_{2}} &\leq \norm{}{\ba}^2 + 
\norm{}{\bv}^2,\\
\norm{}{ \ba_{1} + \bv_{1}}^2 + 2\norm{}{ \ba_{2} + \bv_{2}}^2 &\leq  
4\norm{}{\ba}^2 + 4\norm{}{\bv}^2,
\end{align*}
to obtain the following simplified estimate:
\begin{align*}
\norm{}{(\Phi_{\ba}(\bv))_{1}}  &\leq \norm{}{\ba_{1}} + 4 C_x
 \Big( \norm{}{\ba}^2 + \norm{}{\bv}^2 \Big) + 
C_y \Big( \norm{}{\ba}^2 + \norm{}{\bv}^2 \Big)  \\
\norm{}{(\Phi_{\ba}(\bv))_{2}}  &\leq \norm{}{\ba_{2}} + 4 C_y
 \Big( \norm{}{\ba}^2 + \norm{}{\bv}^2 \Big) + 
C_x \Big( \norm{}{\ba}^2 + \norm{}{\bv}^2 \Big),
\end{align*}
that is to say:
\begin{align*}
\norm{}{(\Phi_{\ba}(\bv))_{1}}  &\leq \norm{}{\ba_{1}} + (4 C_x + C_y)
 \Big( \norm{}{\ba}^2 + \norm{}{\bv}^2 \Big) \\
\norm{}{(\Phi_{\ba}(\bv))_{2}}  &\leq \norm{}{\ba_{2}} + (4 C_y +C_x)
 \Big( \norm{}{\ba}^2 + \norm{}{\bv}^2 \Big).
 \end{align*}
We evaluate now $\norm{}{(\Phi_{\ba}(\bv))} $:
\begin{tiny}
\begin{align*}
&\norm{}{(\Phi_{\ba}(\bv))}^2 = \norm{}{(\Phi_{\ba}(\bv))_{1}}^2 + 
\norm{}{(\Phi_{\ba}(\bv))_{2}}^2 \\
&= \Big[ \norm{}{\ba_{1}} + (4 C_x + C_y)
 \Big( \norm{}{\ba}^2 + \norm{}{\bv}^2 \Big)\Big]^2 + \Big[ 
\norm{}{\ba_{2}} 
+ (4 C_y +C_x)
 \Big( \norm{}{\ba}^2 + \norm{}{\bv}^2 \Big) \Big]^2\\
&\leq 2 \Big[  \norm{}{\ba_{1}}^2 + (4 C_x + C_y)^2
 \Big( \norm{}{\ba}^2 + \norm{}{\bv}^2 \Big) ^2 +  \norm{}{\ba_{2}}^2
+ (4 C_y +C_x)^2
 \Big( \norm{}{\ba}^2 + \norm{}{\bv}^2 \Big)^2 \Big] \\
&= 2 \Big\{ \norm{}{\ba}^2 + \Big[ (4 C_x + C_y)^2 + (C_x + 4C_y)^2\Big]
 \Big( \norm{}{\ba}^2 + \norm{}{\bv}^2 \Big) ^2\Big\}\\
 &= 2 \Big\{ \norm{}{\ba}^2 + \Big[ 17 C_x^2 + 17C_y^2 + 
16C_x C_y\Big]\Big( \norm{}{\ba}^2 + 
\norm{}{\bv}^2 \Big) ^2\Big\}\\
 &\leq 2 \Big\{ \norm{}{\ba}^2 + (25 C_x^2 + 25 C_y^2) \Big( \norm{}{\ba}^2 + 
\norm{}{\bv}^2 \Big) ^2\Big\}\\
 &\leq 2 \Big\{ \norm{}{\ba}^2 + 2(25 C_x^2 + 25 C_y^2) \Big( \norm{}{\ba}^4 
+ 
\norm{}{\bv}^4 \Big) \Big\}\\
&\leq 2 \Big\{  r_a^2 + 50 (C_x^2 + C_y^2)
 \Big( r_a^4 + \rho^4 r_a^4 \Big) \Big\}.
\end{align*}
\end{tiny}
We can now see that the condition to have $\Phi_{\ba} : \Omega_a \rightarrow 
\Omega_a$, is satisfied if the following holds:
\begin{align*}
2 r_a^2 + 100 (C_x^2 + C_y^2)
 \Big( r_a^4 + \rho^4 r_a^4 \Big) \leq \rho^2 r_a^2.
\end{align*}
This gives us a first condition to fulfil, namely:
\begin{align}\label{eq:2d_conv_condition_part1}
(C_x^2 + C_y^2) \leq \frac{ (\rho^2 -2) r_a^2}{100(r_a^4 + \rho^4 r_a^4)}
\end{align}
It follows immediately that whatever $\rho$ we choose, it has to be at least 
greater than $\sqrt{2}$.

We now have to investigate the difference 
$\norm{}{\Phi_{\ba}(v) - \Phi_{\ba}(w)}$ to find out what kind of 
condition it takes to have a contraction onto $\Omega_a$ when both $v$ and $w$ 
belongs to $\Omega_a$.
\begin{tiny}
\begin{align*}
(\Phi_{\ba}(\bv))_{1} - (\Phi_{\ba}(\bw))_{1}  &:= \frac{\Delta 
t}{4} \Big\{ 
\ba_{1} \cdot D^{<1>}_{x}  Q^{-1} ( \bw_1 -\bv_1) 
+  \ba_{2} \cdot  D^{<1>}_{x}  Q^{-1} ( \bw_2 -\bv_2)\\
&\qquad + ( \bw_1 -\bv_1)  \cdot  D^{<1>}_{x}  Q^{-1} \ba_{1}
+  ( \bw_2 -\bv_2) \cdot  D^{<1>}_{x}  Q^{-1}  \ba_{2} \\
&\qquad  +\frac12 \Big[  ( \bw_1 + \bv_1) \cdot  D^{<1>}_{x}  Q^{-1} ( \bw_1 - 
\bv_1) 
+( \bw_1 - \bv_1) \cdot  D^{<1>}_{x}  Q^{-1} ( \bw_1 + \bv_1) \Big]\\
&\qquad +\frac12 \Big[  ( \bw_2 + \bv_2) \cdot  D^{<1>}_{x}  Q^{-1} ( \bw_2 - 
\bv_2) 
+( \bw_2 - \bv_2) \cdot  D^{<1>}_{x}  Q^{-1} ( \bw_2 + \bv_2) \Big] \\
&\qquad+ D^{<1>}_{x} \Big[ \ba_1 \cdot Q^{-1}(\bw_1 - \bv_1)
+  (\bw_1 - \bv_1)\cdot Q^{-1}\ba_1\\
&\qquad + \frac12 \Big( (\bw_1 + \bv_1)\cdot Q^{-1} (\bw_1 - \bv_1)
+ (\bw_1 - \bv_1)\cdot Q^{-1} (\bw_1 + \bv_1) \Big) \Big] \\
&\qquad+ D^{<1>}_{y} \Big[ \ba_1 \cdot Q^{-1}(\bw_2 - \bv_2)
+  (\bw_1 - \bv_1)\cdot Q^{-1}\ba_2 \\
&\qquad+ \frac12 \Big( (\bw_1 + \bv_1)\cdot Q^{-1} (\bw_2 - \bv_2)
+ (\bw_1 - \bv_1)\cdot Q^{-1} (\bw_2 + \bv_2) \Big) \Big]
\Big\},\\
(\Phi_{\ba}(\bv))_{2} - (\Phi_{\ba}(\bw))_{2}  &:= \frac{\Delta 
t}{4} \Big\{ 
\ba_{1} \cdot D^{<1>}_{y}  Q^{-1} ( \bw_1 -\bv_1) 
+  \ba_{2} \cdot  D^{<1>}_{y}  Q^{-1} ( \bw_2 -\bv_2)\\
&\qquad + ( \bw_1 -\bv_1)  \cdot  D^{<1>}_{y}  Q^{-1} \ba_{1}
+  ( \bw_2 -\bv_2) \cdot  D^{<1>}_{y}  Q^{-1}  \ba_{2} \\
&\qquad  +\frac12 \Big[  ( \bw_1 + \bv_1) \cdot  D^{<1>}_{y}  Q^{-1} ( \bw_1 - 
\bv_1) 
+( \bw_1 - \bv_1) \cdot  D^{<1>}_{y}  Q^{-1} ( \bw_1 + \bv_1) \Big]\\
&\qquad +\frac12 \Big[  ( \bw_2 + \bv_2) \cdot  D^{<1>}_{y}  Q^{-1} ( \bw_2 - 
\bv_2) 
+( \bw_2 - \bv_2) \cdot  D^{<1>}_{y}  Q^{-1} ( \bw_2 + \bv_2) \Big] \\
&\qquad+ D^{<1>}_{x} \Big[ \ba_2 \cdot Q^{-1}(\bw_1 - \bv_1)
+  (\bw_2 - \bv_2)\cdot Q^{-1}\ba_1\\
&\qquad + \frac12 \Big( (\bw_2 + \bv_2)\cdot Q^{-1} (\bw_1 - \bv_1)
+ (\bw_2 - \bv_2)\cdot Q^{-1} (\bw_1 + \bv_1) \Big) \Big] \\
&\qquad+ D^{<1>}_{y} \Big[ \ba_2 \cdot Q^{-1}(\bw_2 - \bv_2)
+  (\bw_2 - \bv_2)\cdot Q^{-1}\ba_2 \\
&\qquad+ \frac12 \Big( (\bw_2 + \bv_2)\cdot Q^{-1} (\bw_2 - \bv_2)
+ (\bw_2 - \bv_2)\cdot Q^{-1} (\bw_2 + \bv_2) \Big) \Big]
\Big\}.
\end{align*}
\end{tiny}
We take norms and start estimating
\begin{tiny}
\begin{align*}
\norm{}{(\Phi_{\ba}(\bv))_{1} - (\Phi_{\ba}(\bw))_{1}}  &\leq C_x \Big[
\norm{}{  \ba_{1}}\norm{}{\bw_1 -\bv_1}
+  \norm{}{\ba_{2}} \norm{}{ \bw_2 -\bv_2}\\
&\qquad + \norm{}{ \bw_1 -\bv_1} \norm{}{\ba_{1}}
+  \norm{}{ \bw_2 -\bv_2} \norm{}{\ba_{2}} \\
&\qquad  +\frac12 \Big(  \norm{}{ \bw_1 + \bv_1} \norm{}{\bw_1 - 
\bv_1}
+ \norm{}{\bw_1 - \bv_1} \norm{}{ \bw_1 + \bv_1} \Big)\\
&\qquad +\frac12 \Big(  \norm{}{ \bw_2 + \bv_2 } \norm{}{ \bw_2 - 
\bv_2} + \norm{}{ \bw_2 - \bv_2}\norm{}{ \bw_2 + \bv_2}
\Big) \\
&\qquad+ \norm{}{\ba_1} \norm{}{ \bw_1 - \bv_1}
+  \norm{}{\bw_1 - \bv_1} \norm{}{\ba_1} \\
&\qquad + \frac12 \Big( \norm{}{\bw_1 + \bv_1} \norm{}{\bw_1 - \bv_1}
+ \norm{}{\bw_1 - \bv_1} \norm{}{\bw_1 + \bv_1} \Big) \Big]\\
&\qquad+ C_y \Big[ \norm{}{\ba_1}\norm{}{\bw_2 - \bv_2}
+  \norm{}{\bw_1 - \bv_1} \norm{}{\ba_2} \\
&\qquad+ \frac12 \Big( \norm{}{\bw_1 + \bv_1}\norm{}{\bw_2 - 
\bv_2} + \norm{}{\bw_1 - \bv_1} \norm{}{\bw_2 + \bv_2} \Big) \Big], \\
\norm{}{(\Phi_{\ba}(\bv))_{2} - (\Phi_{\ba}(\bw))_{2}}  &\leq C_y \Big[ 
\norm{}{ \ba_{1}}\norm{}{ \bw_1 -\bv_1}
+ \norm{}{ \ba_{2}}\norm{}{ \bw_2 -\bv_2}\\
&\qquad + \norm{}{ \bw_1 -\bv_1}\norm{}{\ba_{1}}
+  \norm{}{ \bw_2 -\bv_2}\norm{}{\ba_{2}} \\
&\qquad  +\frac12 \Big(  \norm{}{ \bw_1 + \bv_1}\norm{}{
\bw_1 - \bv_1}
+ \norm{}{ \bw_1 - \bv_1}\norm{}{ \bw_1 + \bv_1}  \Big)\\
&\qquad +\frac12 \Big(  \norm{}{ \bw_2 + \bv_2}\norm{}{ \bw_2 - 
\bv_2}
+ \norm{}{ \bw_2 - \bv_2}\norm{}{ \bw_2 + \bv_2} \Big) \\
&\qquad + \norm{}{\ba_2} \norm{}{\bw_2 - \bv_2}
+  \norm{}{\bw_2 - \bv_2}\norm{}{\ba_2} \\
&\qquad+ \frac12 \Big( \norm{}{\bw_2 + \bv_2}\norm{}{\bw_2 - \bv_2}
+ \norm{}{\bw_2 - \bv_2} \norm{}{\bw_2 + \bv_2} \Big) \Big]\\
&\qquad+ C_x \Big[ \norm{}{\ba_2}\norm{}{\bw_1 - \bv_1}
+ \norm{}{\bw_2 - \bv_2}\norm{}{\ba_1}\\
&\qquad + \frac12 \Big( \norm{}{\bw_2 + \bv_2}\norm{}{\bw_1 - \bv_1}
+ \norm{}{\bw_2 - \bv_2}\norm{}{\bw_1 + \bv_1} \Big) \Big] .
\end{align*}
\end{tiny}
This leads to
\begin{tiny}
\begin{align*}
\norm{}{(\Phi_{\ba}(\bv))_{1} - (\Phi_{\ba}(\bw))_{1}}  &\leq
\Big[ 2 C_x \Big( 2\norm{}{  \ba_{1}} + \norm{}{\bw_1  + \bv_1} \Big)
+ \frac{C_y}{2}\Big( 2\norm{}{  \ba_{2}} + \norm{}{\bw_2  + 
\bv_2}\Big)\Big]\norm{}{ \bw_1 -\bv_1}\\
&\quad + \Big[ C_x \Big( 2\norm{}{  \ba_{2}} + \norm{}{\bw_2  + \bv_2}\Big)
+ \frac{C_y}{2}\Big( 2\norm{}{  \ba_{1}} + \norm{}{\bw_1  + 
\bv_1}\Big)\Big]\norm{}{ \bw_2 -\bv_2},\\
\norm{}{(\Phi_{\ba}(\bv))_{2} - (\Phi_{\ba}(\bw))_{2}}  &\leq
\Big[ 2 C_y \Big( 2\norm{}{  \ba_{2}} + \norm{}{\bw_2  + \bv_2}\Big)
+ \frac{C_x}{2}\Big( 2\norm{}{  \ba_{1}} + \norm{}{\bw_1  + 
\bv_1}\Big)\Big]\norm{}{ \bw_2 -\bv_2}\\
&\quad + \Big[ C_y \Big( 2\norm{}{  \ba_{1}} + \norm{}{\bw_1  + \bv_1}\Big)
+ \frac{C_x}{2}\Big( 2\norm{}{  \ba_{2}} + \norm{}{\bw_2  + 
\bv_2}\Big)\Big]\norm{}{ \bw_1 -\bv_1}.
\end{align*}
\end{tiny}
We square the quantities above and use $(a+b)^2 \leq 2(a^2+b^2)$ once:
\begin{tiny}
\begin{align*}
\norm{}{(\Phi_{\ba}(\bv))_{1} - (\Phi_{\ba}(\bw))_{1}}^2  &\leq
2\Big[ 2 C_x \Big( 2\norm{}{  \ba_{1}} + \norm{}{\bw_1  + \bv_1} \Big)
+ \frac{C_y}{2}\Big( 2\norm{}{  \ba_{2}} + \norm{}{\bw_2  + 
\bv_2}\Big)\Big]^2\norm{}{ \bw_1 -\bv_1}^2\\
&\quad + 2\Big[ C_x \Big( 2\norm{}{  \ba_{2}} + \norm{}{\bw_2  + \bv_2}\Big)
+ \frac{C_y}{2}\Big( 2\norm{}{  \ba_{1}} + \norm{}{\bw_1  + 
\bv_1}\Big)\Big]^2\norm{}{ \bw_2 -\bv_2}^2,\\
\norm{}{(\Phi_{\ba}(\bv))_{2} - (\Phi_{\ba}(\bw))_{2}}^2  &\leq
2\Big[ 2 C_y \Big( 2\norm{}{  \ba_{2}} + \norm{}{\bw_2  + \bv_2}\Big)
+ \frac{C_x}{2}\Big( 2\norm{}{  \ba_{1}} + \norm{}{\bw_1  + 
\bv_1}\Big)\Big]^2\norm{}{ \bw_2 -\bv_2}^2\\
&\quad + 2\Big[ C_y \Big( 2\norm{}{  \ba_{1}} + \norm{}{\bw_1  + \bv_1}\Big)
+ \frac{C_x}{2}\Big( 2\norm{}{  \ba_{2}} + \norm{}{\bw_2  + 
\bv_2}\Big)\Big]^2\norm{}{ \bw_1 -\bv_1}^2,
\end{align*}
\end{tiny}
and once more:
\begin{tiny}
\begin{align*}
\norm{}{(\Phi_{\ba}(\bv))_{1} - (\Phi_{\ba}(\bw))_{1}}^2  &\leq
4\Big[ 4 C_x^2 \Big( 2\norm{}{  \ba_{1}} + \norm{}{\bw_1  + \bv_1} \Big)^2
+ \frac{C_y^2}{4}\Big( 2\norm{}{  \ba_{2}} + \norm{}{\bw_2  + 
\bv_2}\Big)^2\Big]\norm{}{ \bw_1 -\bv_1}^2\\
&\quad + 4\Big[ C_x^2 \Big( 2\norm{}{  \ba_{2}} + \norm{}{\bw_2  + \bv_2}\Big)^2
+ \frac{C_y^2}{4}\Big( 2\norm{}{  \ba_{1}} + \norm{}{\bw_1  + 
\bv_1}\Big)^2\Big]\norm{}{ \bw_2 -\bv_2}^2,\\
\norm{}{(\Phi_{\ba}(\bv))_{2} - (\Phi_{\ba}(\bw))_{2}}^2  &\leq
4\Big[ 4 C_y^2 \Big( 2\norm{}{  \ba_{2}} + \norm{}{\bw_2  + \bv_2}\Big)^2
+ \frac{C_x^2}{4}\Big( 2\norm{}{  \ba_{1}} + \norm{}{\bw_1  + 
\bv_1}\Big)^2\Big]\norm{}{ \bw_2 -\bv_2}^2\\
&\quad + 4\Big[ C_y^2 \Big( 2\norm{}{  \ba_{1}} + \norm{}{\bw_1  + \bv_1}\Big)^2
+ \frac{C_x^2}{4}\Big( 2\norm{}{  \ba_{2}} + \norm{}{\bw_2  + 
\bv_2}\Big)^2\Big]\norm{}{ \bw_1 -\bv_1}^2,
\end{align*}
\end{tiny}
and one last time
\begin{tiny}
\begin{align*}
\norm{}{(\Phi_{\ba}(\bv))_{1} - (\Phi_{\ba}(\bw))_{1}}^2  &\leq
4\Big[ 8 C_x^2 \Big( 4\norm{}{  \ba_{1}}^2 + \norm{}{\bw_1  + \bv_1}^2 \Big)
+ \frac{C_y^2}{2}\Big( 4\norm{}{  \ba_{2}}^2 + \norm{}{\bw_2  + 
\bv_2}^2 \Big)\Big]\norm{}{ \bw_1 -\bv_1}^2\\
&\quad + 4\Big[ 2C_x^2 \Big( 4\norm{}{  \ba_{2}}^2 + \norm{}{\bw_2  + 
\bv_2}^2\Big)
+ \frac{C_y^2}{2}\Big( 4\norm{}{  \ba_{1}}^2 + \norm{}{\bw_1  + 
\bv_1}^2\Big)\Big]\norm{}{ \bw_2 -\bv_2}^2,\\
\norm{}{(\Phi_{\ba}(\bv))_{2} - (\Phi_{\ba}(\bw))_{2}}^2  &\leq
4\Big[ 8 C_y^2 \Big( 4\norm{}{  \ba_{2}}^2 + \norm{}{\bw_2  + \bv_2}^2\Big)
+ \frac{C_x^2}{2}\Big( 4\norm{}{  \ba_{1}}^2 + \norm{}{\bw_1  + 
\bv_1}^2\Big)\Big]\norm{}{ \bw_2 -\bv_2}^2\\
&\quad + 4\Big[ 2C_y^2 \Big( 4\norm{}{  \ba_{1}}^2 + \norm{}{\bw_1  + 
\bv_1}^2\Big)
+ \frac{C_x^2}{2}\Big( 4\norm{}{  \ba_{2}}^2 + \norm{}{\bw_2  + 
\bv_2}^2\Big)\Big]\norm{}{ \bw_1 -\bv_1}^2.
\end{align*}
\end{tiny}
We can now sum up the two last inequalities that we have obtained. It follows 
that:
\begin{tiny}
\begin{align*}
\norm{}{\Phi_{\ba}(\bv) - \Phi_{\ba}(\bw)}^2  &= 
\norm{}{(\Phi_{\ba}(\bv))_{1} - (\Phi_{\ba}(\bw))_{1}}^2  + 
\norm{}{(\Phi_{\ba}(\bv))_{2} - (\Phi_{\ba}(\bw))_{2}}^2 \\
&\leq 4\Big[ (32 C_x^2 + 8 C_y^2) \norm{}{  \ba_{1}}^2 
+ (2 C_x^2 + 2 C_y^2) \norm{}{  \ba_{2}}^2 \\
&+ (8 C_x^2 + 2 C_y^2) \norm{}{ \bw_1 +\bv_1}^2
+  (\frac12 C_x^2 +  \frac12 C_y^2) \norm{}{ \bw_2 +\bv_2}^2 \Big] \norm{}{ 
\bw_1 -\bv_1}^2\\
&+ 4\Big[ (32 C_y^2 + 8 C_x^2) \norm{}{  \ba_{2}}^2 
+ (2 C_x^2 + 2 C_y^2) \norm{}{  \ba_{1}}^2 \\
&+ (8 C_y^2 + 2 C_x^2) \norm{}{ \bw_2 +\bv_2}^2
+  (\frac12 C_x^2 +  \frac12 C_y^2) \norm{}{ \bw_1 +\bv_1}^2 \Big] \norm{}{ 
\bw_2 -\bv_2}^2.
\end{align*}
\end{tiny}
By means of a gross factorization, we can further estimate the expression as 
\begin{tiny}
\begin{align*}
\norm{}{\Phi_{\ba}(\bv) - \Phi_{\ba}(\bw)}^2 &\leq 4 \Big[ ( 32C_x^2 + 
8C_y^2)\norm{}{  \ba }^2  +  (8 C_x^2 + 2 C_y^2) \norm{}{ \bw +\bv}^2 \Big] 
\norm{}{ \bw_1 -\bv_1}^2\\
&+ 4\Big[ ( 32C_y^2 + 8C_x^2)\norm{}{  \ba }^2  +  (8 C_y^2 + 2 C_x^2) 
\norm{}{ \bw +\bv}^2 \Big] \norm{}{ \bw_2 -\bv_2}^2.
\end{align*}
\end{tiny}
We now use the estimates we have on $\bw$, $\bv$, $\ba$, to get that
\begin{tiny}
\begin{align*}
\norm{}{ \Phi_{\ba}(\bv) - \Phi_{\ba}(\bw) }^2 &\leq 4 \Big[ ( 32C_x^2 + 
8C_y^2)r_a^2  +  (8 C_x^2 + 2 C_y^2) 4\rho^2 r_a^2 \Big] 
\norm{}{ \bw_1 -\bv_1}^2\\
&+ 4\Big[ ( 32C_y^2 + 8C_x^2)r_a^2  +  (8 C_y^2 + 2 C_x^2) 
4\rho^2 r_a^2 \Big] \norm{}{ \bw_2 -\bv_2}^2.
\end{align*}
\end{tiny}
By means of another gross factorization we finally achieve that
\begin{align}\label{eq:2d_conv_condition_part2}
\norm{}{ \Phi_{\ba}(\bv) - \Phi_{\ba}(\bw) }^2 &\leq 128(C_x^2 + 
C_y^2)r_a^2(1 + \rho^2) \norm{}{ \bw -\bv}^2.
\end{align}
If we insert \eqref{eq:2d_conv_condition_part1} in 
\eqref{eq:2d_conv_condition_part2}  
\begin{align*}
\norm{}{\Phi_{\ba}(\bv) - \Phi_{\ba}(\bw)}^2 & \leq \frac{128}{100}\frac{ 
(\rho^2 -2)(1 + \rho^2)}{(1 + \rho^4 )}\norm{}{ \bw -\bv}^2.
\end{align*}
In order to have a contraction we have to require that
\begin{align*}
 \frac{128}{100}\frac{ 
(\rho^2 -2)(1 + \rho^2)}{(1 + \rho^4 )}\leq 1,
\end{align*}
which is satisfied for any $\rho \leq \sqrt{\frac{32+ \sqrt{3516}}{14}} 
\approx 2.55$. 

We therefore choose $\rho$ to be equal to $\sqrt{2+\sqrt{5}} \approx 2.058$, so 
that we obtain the largest admissible right-hand side in condition 
\eqref{eq:2d_conv_condition_part1}, which now reads
\begin{align}\label{eq:2d_conv_condition_final}
(C_x^2 + C_y^2) \leq \frac{\sqrt5 - 2}{200 r_a^2}.
\end{align}
The initial claim follows by writing explicitly the sum $C_x^2 + C_y^2$.
\end{proof}

\end{tiny}

\bibliographystyle{alpha}
\bibliography{biblionew}

\def\cprime{$'$}
\begin{thebibliography}{MMF11}

\bibitem[AK98]{ArKh1998}
Vladimir~I. Arnold and Boris~A. Khesin.
\newblock {\em Topological Methods in Hydrodynamics}, volume 125 of {\em
  Applied Mathematical Sciences}.
\newblock Springer-Verlag, New York, 1998.

\bibitem[Arn66]{Ar1966}
V.~I. Arnold.
\newblock Sur la g\'eom\'etrie diff\'erentielle des groupes de {L}ie de
  dimension infinie et ses applications \`a l'hydrodynamique des fluides
  parfaits.
\newblock {\em Ann. Inst. Fourier (Grenoble)}, 16(fasc. 1):319--361, 1966.

\bibitem[CH93]{CaHo1993}
Roberto Camassa and Darryl~D. Holm.
\newblock An integrable shallow water equation with peaked solitons.
\newblock {\em Phys. Rev. Lett.}, 71(11):1661--1664, 1993.

\bibitem[CTM12]{Particle}
A.~Chertock, P.~D. Toit, and J.~E. Marsden.
\newblock Integration of the epdiff equation by particle methods.
\newblock {\em ESAIM: Mathematical Modelling and Numerical Analysis},
  46:515--534, 5 2012.

\bibitem[EM70]{EbMa1970}
David~G. Ebin and Jerrold~E. Marsden.
\newblock Groups of diffeomorphisms and the notion of an incompressible fluid.
\newblock {\em Ann. of Math.}, 92:102--163, 1970.

\bibitem[FM]{Daisuke}
D.~Furihata and T.~Matsuo.
\newblock Predictor corrector algorithm with the discrete variational
  derivative method.
\newblock {\em in preparation}.

\bibitem[FQ10]{FeQi2010}
Kang Feng and Mengzhao Qin.
\newblock {\em Symplectic geometric algorithms for {H}amiltonian systems}.
\newblock Zhejiang Science and Technology Publishing House, Hangzhou, 2010.

\bibitem[GB09]{Ga2009}
Fran{\c{c}}ois Gay-Balmaz.
\newblock Well-posedness of higher dimensional {C}amassa-{H}olm equations.
\newblock {\em Bull. Transilv. Univ. Bra\c sov Ser. III}, 2(51):55--58, 2009.

\bibitem[Ham82]{Ha1982}
Richard~S. Hamilton.
\newblock The inverse function theorem of {N}ash and {M}oser.
\newblock {\em Bull. Amer. Math. Soc. (N.S.)}, 7(1):65--222, 1982.

\bibitem[HLW06]{HaLuWa2006}
Ernst Hairer, Christian Lubich, and Gerhard Wanner.
\newblock {\em Geometric Numerical Integration}, volume~31 of {\em Springer
  Series in Computational Mathematics}.
\newblock Springer-Verlag, Berlin, second edition, 2006.

\bibitem[HM05]{HoMa2005}
Darryl~D. Holm and Jerrold~E. Marsden.
\newblock Momentum maps and measure-valued solutions (peakons, filaments, and
  sheets) for the {EPD}iff equation.
\newblock In {\em The breadth of symplectic and {P}oisson geometry}, volume 232
  of {\em Progr. Math.}, pages 203--235. Birkh\"auser Boston, Boston, MA, 2005.

\bibitem[HS97a]{CDA2}
J.~M. Hyman and M.~Shashkov.
\newblock Natural discretizations for the divergence, gradient, and curl on
  logically rectangular grids.
\newblock {\em Comput. Math. Appl.}, 33(4):81--104, 1997.

\bibitem[HS97b]{CDA1}
J.~M. Hyman and M.~Shashkov.
\newblock {The adjoint operators for the natural discretizations of the
  divergence, gradient and curl on logically rectangular grids}.
\newblock {\em IMACS J. Appl. Num. Math.}, 25:413--442, 1997.

\bibitem[HS04]{Holm}
D.~D. Holm and M.~F. Staley.
\newblock Interaction dynamics of singular wave fronts.
\newblock 2004.

\bibitem[HSS09]{HolmBook}
D.~D. Holm, T.~Schmah, and C.~Stoica.
\newblock {\em {Geometric Mechanics and Symmetry: From Finite to Infinite
  Dimensions (Oxford Texts in Applied and Engineering Mathematics)}}.
\newblock Oxford University Press, USA, 2009.

\bibitem[KW09]{KhWe2009}
Boris Khesin and Robert Wendt.
\newblock {\em The Geometry of Infinite-dimensional Groups}, volume~51 of {\em
  A Series of Modern Surveys in Mathematics}.
\newblock Springer-Verlag, Berlin, 2009.

\bibitem[LR04]{LeRe2004}
Benedict Leimkuhler and Sebastian Reich.
\newblock {\em Simulating {H}amiltonian Dynamics}, volume~14 of {\em Cambridge
  Monographs on Applied and Computational Mathematics}.
\newblock Cambridge University Press, Cambridge, 2004.

\bibitem[MF11]{matsuo}
T.~Matsuo and D.~Furihata.
\newblock {\em {Discrete Variational Derivative Method: a Structure-Preserving
  Numerical Method for Partial Differential Equations}}.
\newblock Chapman and Hall/CRC, 2011.

\bibitem[MM07]{McMa2007}
Robert~I. McLachlan and Stephen Marsland.
\newblock {$N$}-particle dynamics of the {E}uler equations for planar
  diffeomorphisms.
\newblock {\em Dyn. Syst.}, 22(3):269--290, 2007.

\bibitem[MM13]{MiMu2013}
P.~W. Michor and D.~Mumford.
\newblock On {E}uler's equation and `{EPD}iff'.
\newblock {\em J. Geom. Mech.}, 5(3):319--344, 2013.

\bibitem[MMF11]{Yuto}
Y.~Miyatake, T.~Matsuo, and D.~Furihata.
\newblock Invariants-preserving integration of the modified camassa–holm
  equation.
\newblock {\em Japan Journal of Industrial and Applied Mathematics},
  28(3):351--381, 2011.

\bibitem[Mod15]{Mo2015}
Klas Modin.
\newblock Generalized {H}unter--{S}axton equations, optimal information
  transport, and factorization of diffeomorphisms.
\newblock {\em J. Geom. Anal.}, 25(2):1306--1334, 2015.

\bibitem[MP10]{MiPr2010}
Gerard Misio{\l}ek and Stephen~C. Preston.
\newblock Fredholm properties of {R}iemannian exponential maps on
  diffeomorphism groups.
\newblock {\em Invent. Math.}, 179(1):191--227, 2010.

\bibitem[MR99]{MaRa1999}
Jerrold~E. Marsden and Tudor~S. Ratiu.
\newblock {\em Introduction to Mechanics and Symmetry}, volume~17 of {\em Texts
  in Applied Mathematics}.
\newblock Springer-Verlag, New York, second edition, 1999.

\bibitem[Poi01]{Po1901}
H.~Poincar{\'e}.
\newblock Sur une forme nouvelle des {\'e}quations de la m{\'e}canique.
\newblock {\em C.R. Acad. Sci.}, 132:369--371, 1901.

\bibitem[Sha96]{CDA3}
M.~Shashkov.
\newblock {\em Conservative finite-difference methods on general grids}.
\newblock Symbolic and Numeric Computation Series. CRC Press, Boca Raton, FL,
  1996.
\newblock With 1 IBM-PC floppy disk (3.5 inch; HD).

\bibitem[Shk98]{Sh1998}
Steve Shkoller.
\newblock Geometry and curvature of diffeomorphism groups with {$H^1$} metric
  and mean hydrodynamics.
\newblock {\em J. Funct. Anal.}, 160(1):337--365, 1998.

\bibitem[SSC94]{SaCa1994}
J.~M. Sanz-Serna and M.P. Calvo.
\newblock {\em Numerical Hamiltonian problems}.
\newblock Chapman \& Hall, 1994.

\bibitem[You10]{Shapes}
L.~Younes.
\newblock {\em {Shapes and Diffeomorphisms}}.
\newblock Springer-Verlag Berlin Heidelberg, 2010.

\bibitem[ZM88]{ZhMa1988}
Ge~Zhong and Jerrold~E. Marsden.
\newblock Lie-{P}oisson {H}amilton-{J}acobi theory and {L}ie-{P}oisson
  integrators.
\newblock {\em Phys. Lett. A}, 133(3):134--139, 1988.

\end{thebibliography}
\end{document}